\DeclareMathOperator{\n1}{\mathbbm{1}}
\DeclareMathOperator{\supp}{supp}
\DeclareMathOperator{\spec}{sp}
\DeclareMathOperator{\C}{\mathbb{C}}
\DeclareMathOperator{\R}{\mathbb{R}}
\DeclareMathOperator{\Z}{\mathbb{Z}}
\DeclareMathOperator{\T}{\mathbb{T}}
\DeclareMathOperator{\N}{\mathbb{N}}
\DeclareMathOperator{\B}{\mathcal{B}}
\newtheorem{thm}{Theorem}
\newtheorem*{thmA}{Theorem A}
\newtheorem*{thmB}{Theorem B}
\newtheorem*{thmC}{Theorem C}
\newtheorem*{thmD}{Theorem D}
\newtheorem{lem}[thm]{Lemma}
\newtheorem{prop}[thm]{Proposition}
\newtheorem{rmk}[thm]{Remark}
\newtheorem{cor}[thm]{Corollary}
\theoremstyle{definition}
\newtheorem{defn}[thm]{Definition}
\newtheorem{conj}{Conjecture}
\begin{document}
\title{On the growth of Fourier multipliers}

%    Information for first author
\author{Bat-Od Battseren}
%    Address of record for the research reported here
\address{Université Côte d'Azur, LJAD, Nice F-06000, France}
%    Current address
\curraddr{}
\email{batoddd@gmail.com}
\urladdr{https://sites.google.com/view/batod-battseren/}
%    \thanks will become a 1st page footnote.
\thanks{}

%    General info
\subjclass[2010]{Primary 43A22; Secondary 22D15}
\date{\today}

\dedicatory{}

\keywords{
Locally compact groups, group algebras, Fourier multipliers, weak amenability, rapid decay property.}

\begin{abstract}
We define a sequence of functions, namely tame cuts, in the Fourier algebra $A(G)$ of a locally compact group $G$, that satisfies certain convergence and growth conditions. This new consideration allows us to give a group admitting a Fourier multiplier that is not completely bounded. Furthermore, we show that the induction map $MA(\Gamma)\rightarrow MA(G)$ is not always continuous. We also show how Liao's Property $(T_{Schur}, G, K)$ opposes tame cuts. Some examples are provided.
\end{abstract}

\maketitle
\tableofcontents

\section{Introduction}
Admitting a certain approximate identity in the Fourier algebra $A(G)$ of a locally compact group $G$ can be very useful property for answering analytical, algebraic, or geometrical questions on the locally compact group $G$. The most famous ones would be John von Neumann's amenability, and its generalizations: a-T-menability (also known as Haagerup's property), weak amenability, and approximation property. It is common that these properties consider an approximate identity that is bounded in a related topology. For example, a locally compact group $G$ is weakly amenable if there is an approximate identity in $A(G)$ consisting of compactly supported continuous functions that are uniformly bounded in $M_0A(G)$ (see \cite{dCH1985,haagerup2016,haagerup1989}). In the present paper, we consider a sequence of compactly supported Fourier multipliers that are generally unbounded in Fourier multiplier algebra $MA(G)$ but grow at polynomial rate.
\begin{defn}\label{def1} Let $G$ be a locally compact group, and $\ell$ a proper length function on $G$. A sequence $(\varphi_n)$ of compactly supported, continuous functions is called \textit{tame cuts} for $(G,\ell)$ if there exist constants $C,a\geq 0$ such that
\begin{align*}
\varphi_n|_{B_n} \equiv 1 \quad \text{and} \quad \|\varphi_n\|_{MA}\leq Cn^a \quad \text{for all} \quad n\in \N.
\end{align*} If in addition, we have $\|\varphi_n\|_{M_0A}\leq Cn^a$ for all $n\in \N$, we say that the sequence $(\varphi_n)$ is \textit{completely bounded tame cuts}.
\end{defn}

The main goal is to provide some applications of tame cuts, to show how tame cuts connect to other properties, and to provide groups with (completely bounded) tame cuts.

Our first result is the following theorem in which we provide a group admitting a Fourier multiplier that is not completely bounded. The proof uses the tame cuts.
\begin{thmA}
Let $\Gamma$ be a uniform lattice in $G=SL(3,\R)$. Then $M_0 A(\Gamma)$ is a proper subalgebra of $MA(\Gamma)$.
\end{thmA}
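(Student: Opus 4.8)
The plan is to separate $M_0A(\Gamma)$ from $MA(\Gamma)$ by exhibiting tame cuts that cannot be made completely bounded, and then to promote this into properness of the inclusion by a closed graph argument. The two genuine ingredients are property RD on the analytic side and a higher-rank rigidity ($T_{Schur}$) on the obstruction side.

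First I would construct explicit tame cuts for $(\Gamma,\ell)$, where $\ell$ is a word length (equivalently the restriction to $\Gamma$ of a Riemannian length on $G$). Since $\Gamma$ is a \emph{uniform} lattice in $SL(3,\R)$, Lafforgue's theorem gives that $\Gamma$ has the rapid decay property: there is a polynomial $P$ with $\|\lambda(h)\|\le P(n)\|h\|_2$ whenever $\supp h\subseteq B_n$. I would feed this into the ball indicators $\chi_{B_n}=\sum_{j=0}^{n}\chi_{S_j}$, where $\chi_{S_j}$ is the indicator of the sphere of radius $j$. The Fourier multiplier $\chi_{S_j}$ acts on $\mathrm{VN}(\Gamma)$ by $\lambda(f)\mapsto\lambda(\chi_{S_j}f)$, and since $\|\chi_{S_j}f\|_2\le\|f\|_2=\|\lambda(f)\delta_e\|_2\le\|\lambda(f)\|$, property RD yields $\|\lambda(\chi_{S_j}f)\|\le P(j)\|\lambda(f)\|$, hence $\|\chi_{S_j}\|_{MA}\le P(j)$ and $\|\chi_{B_n}\|_{MA}\le\sum_{j=0}^{n}P(j)$, which is polynomial in $n$. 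As $\chi_{B_n}$ is finitely supported and equal to $1$ on $B_n$, the sequence $(\chi_{B_n})$ is a family of tame cuts in the sense of Definition~\ref{def1}.

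Next, and this is the heart of the matter, I would show that $\Gamma$ admits \emph{no} completely bounded tame cuts. Mere failure of weak amenability does not suffice, since the defining inequality of completely bounded tame cuts already permits polynomial growth of the $M_0A$-norm, whereas weak amenability rules out only \emph{uniformly bounded} approximate units. The additional rigidity comes from Liao's property $(T_{Schur},G,K)$ for $G=SL(3,\R)$ and $K=SO(3)$, which — as developed earlier in the paper — opposes tame cuts. Concretely, I would argue by contradiction: given completely bounded tame cuts $(\varphi_n)$ for $\Gamma$, I would average over $K$ and induce them to $G$, producing a family of $K$-bi-invariant completely bounded multipliers on $G$ whose $M_0A$-norms grow only polynomially while the functions converge pointwise to $1$; property $(T_{Schur},G,K)$ forbids exactly such a family, giving the contradiction and hence the absence of completely bounded tame cuts for $\Gamma$.

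Finally I would combine the two steps. The inclusion $M_0A(\Gamma)\subseteq MA(\Gamma)$ is always contractive, so if it were an equality of sets it would be a continuous bijection of Banach spaces, and the open mapping (equivalently closed graph) theorem would supply a constant $K$ with $\|\psi\|_{M_0A}\le K\|\psi\|_{MA}$ for every $\psi$. Applied to the tame cuts of the first step this gives $\|\chi_{B_n}\|_{M_0A}\le K\sum_{j=0}^{n}P(j)$, so $(\chi_{B_n})$ would be completely bounded tame cuts, contradicting the second step; therefore the inclusion is proper and $\Gamma$ carries a Fourier multiplier that is not completely bounded. I expect the main obstacle to be precisely the second step: it rests on the quantitative, higher-rank input of property $(T_{Schur},G,K)$ and, crucially, on transferring that obstruction from $G$ to the cocompact lattice $\Gamma$ — a transfer that must be handled with care, since the paper itself shows that the induction map $MA(\Gamma)\to MA(G)$ need not be continuous, so one has to exploit the cocompactness and the completely bounded setting to make the averaging-and-inducing step legitimate.
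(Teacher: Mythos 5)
Your overall architecture matches the paper's: use Lafforgue's RD theorem to manufacture characteristic tame cuts $(\chi_{B_n})$ with polynomially growing $MA$-norm, assume $M_0A(\Gamma)=MA(\Gamma)$ and invoke the closed graph theorem to upgrade them to completely bounded tame cuts, push them to $G$ by the induction map in the completely bounded category (where Haagerup's lemma makes $\Phi:M_0A(\Gamma)\to M_0A(G)$ bounded, exactly the point you flag), average over $K=SO(3)$, and then kill the resulting family with a higher-rank rigidity statement. Your first and third steps are correct, and your sphere-by-sphere estimate $\|\chi_{B_n}\|_{MA}\le\sum_{j\le n}P(j)$ is a harmless variant of the paper's direct application of RD to $\chi_{B_n}f$.

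The genuine gap is in the step you yourself call the heart of the matter: you invoke Liao's property $(T_{Schur},G,K)$ for $G=SL(3,\R)$, $K=SO(3)$, but this property is \emph{not known} for $SL(3,\R)$ or its lattices --- the only known example is $Sp_4$ over $\mathbb{F}_q((\pi))$ and certain subgroups of its nonuniform lattice, and the paper explicitly records that it has no example of a group to which Theorem~C applies. So as written your obstruction step rests on an unproven input and cannot be completed. The available quantitative rigidity for $SL(3,\R)$ is instead the Lafforgue--de la Salle inequality: for every $K$-bi-invariant $\varphi\in C_0(G)$ and $t>0$,
\begin{align*}
C e^{t/3}\left|\varphi\begin{pmatrix} e^t&0&0\\ 0&1&0\\ 0&0&e^{-t}\end{pmatrix}\right|\le\|\varphi\|_{MA(G)}.
\end{align*}
Since the induced and $K$-averaged functions $\dot{\widehat{\chi}}_{B_n}$ equal $1$ on a ball of radius $n-2c$ (where $c$ bounds the length of the fundamental domain $\Omega$), taking $t\approx n/4$ makes the left side grow exponentially in $n$ while the right side is bounded by $\|\widehat{\chi}_{B_n}\|_{M_0A(G)}\lesssim n^a$; this is the contradiction. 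The inequality defeats \emph{any} subexponential growth, which is precisely why, as you correctly observe, mere failure of weak amenability would not suffice. Replacing your $(T_{Schur})$ appeal by this inequality repairs the argument and recovers the paper's proof.
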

This result provides supporting examples for the following conjecture.
\begin{conj}\label{conj}
A discrete countable group $\Gamma$ is amenable if and only if all Fourier multipliers of $\Gamma$ are completely bounded, i.e. $M_0A(\Gamma) = MA(\Gamma)$.	
\end{conj}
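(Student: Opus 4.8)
The plan is to split the equivalence into its two implications, which are of very different natures. The forward implication — amenability forces $M_0A(\Gamma) = MA(\Gamma)$ — is accessible through classical approximation arguments, whereas the converse is the genuine content of the conjecture and is exactly where tame cuts, and their obstruction, are meant to enter.

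First I would establish the forward direction by proving the stronger collapse $B(\Gamma) = M_0A(\Gamma) = MA(\Gamma)$. By Leptin's theorem, amenability of $\Gamma$ is equivalent to the existence of a bounded approximate identity $(e_i)$ in $A(\Gamma)$, which may be taken compactly supported with $\|e_i\|_{A} \leq 1$ and $e_i \to 1$ pointwise. Given $\varphi \in MA(\Gamma)$, the products $\varphi e_i$ lie in $A(\Gamma)$ with $\|\varphi e_i\|_{B} = \|\varphi e_i\|_{A} \leq \|\varphi\|_{MA}$, since $A(\Gamma)$ embeds isometrically as a closed ideal of $B(\Gamma)$. Thus the net $(\varphi e_i)$ sits in a fixed weak-$*$ compact ball of $B(\Gamma) = C^*(\Gamma)^*$; passing to a weak-$*$ cluster point $\psi$, point evaluations are weak-$*$ continuous and $\varphi e_i \to \varphi$ pointwise, which forces $\psi = \varphi$, so $\varphi \in B(\Gamma)$ with $\|\varphi\|_{B} \leq \|\varphi\|_{MA}$. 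Together with the universal contractive inclusions $B(\Gamma) \subseteq M_0A(\Gamma) \subseteq MA(\Gamma)$, this yields equality of all three algebras, and in particular $M_0A(\Gamma) = MA(\Gamma)$.

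For the converse I would argue contrapositively: assuming $\Gamma$ non-amenable, I would try to produce a single Fourier multiplier in $MA(\Gamma) \setminus M_0A(\Gamma)$. The strategy modelled on Theorem A is to combine two ingredients. On one hand one seeks tame cuts $(\varphi_n)$ for $(\Gamma, \ell)$, so that cut-offs have $MA$-norms growing only polynomially; on the other hand a property of $(T_{Schur})$-type is used to show that $\Gamma$ admits \emph{no} completely bounded tame cuts, i.e. any sequence with $\psi_n|_{B_n} \equiv 1$ must have $\|\psi_n\|_{M_0A}$ growing faster than any polynomial. Given both, I would assemble a lacunary series $\Phi = \sum_k c_k(\varphi_{n_{k+1}} - \varphi_{n_k})$ of differences supported in disjoint annuli $B_{n_{k+1}} \setminus B_{n_k}$: the polynomial $MA$-growth of the tame cuts permits a choice of coefficients $c_k$ for which the series converges in $MA(\Gamma)$, while the enforced super-polynomial growth of the $M_0A$-norms obstructs convergence in $M_0A(\Gamma)$, placing $\Phi$ in $MA(\Gamma) \setminus M_0A(\Gamma)$.

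The main obstacle is precisely this converse in full generality. Neither ingredient is available for an arbitrary non-amenable discrete group: it is unclear that every such $\Gamma$ possesses tame cuts with controlled $MA$-growth, and the $(T_{Schur})$-type obstruction ruling out completely bounded tame cuts is presently known only for special classes, such as the higher-rank lattices behind Theorem A. I would therefore not expect a uniform proof of the converse; the realistic program is to verify the two ingredients group-by-group — with Theorem A handling uniform lattices in $SL(3,\R)$ — and to isolate structural hypotheses, for instance property RD combined with a strong property $(T)$, under which both can be secured simultaneously for a broad family of non-amenable groups.
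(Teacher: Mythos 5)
The statement you are addressing is stated in the paper as a \emph{conjecture}, and the paper does not prove it: it only records that the ``only if'' direction is already known (citing Nebbia and Losert) and offers Theorem A as supporting evidence for the open direction. Your treatment is consistent with that status. Your proof of the forward implication is correct and is essentially the classical argument behind the cited results: Leptin's theorem gives a bounded approximate identity $(e_i)$ in $A(\Gamma)$, the products $\varphi e_i$ are uniformly bounded in $B(\Gamma)=C^*(\Gamma)^*$, and any weak-$*$ cluster point must equal $\varphi$ because evaluation at a point $x$ is weak-$*$ continuous (pairing with $\delta_x\in\ell^1(\Gamma)\subseteq C^*(\Gamma)$), giving $MA(\Gamma)\subseteq B(\Gamma)\subseteq M_0A(\Gamma)$. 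You also correctly identify the converse as the genuine open content and do not claim to prove it, which matches the paper.

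Two remarks on your program for the converse. First, it diverges from the paper's actual method for its supporting example: Theorem A is proved by contradiction, not by constructing an explicit multiplier in $MA(\Gamma)\setminus M_0A(\Gamma)$. Assuming $M_0A(\Gamma)=MA(\Gamma)$, the closed graph theorem makes the two norms equivalent, so the characteristic tame cuts supplied by Lafforgue's RD theorem become completely bounded tame cuts; pushing them through the induction map to $G=SL(3,\R)$, averaging over $K=SO(3,\R)$, and applying the Lafforgue--de la Salle rigidity inequality pits exponential growth against polynomial growth, a contradiction. No individual bad multiplier is ever exhibited, and no $(T_{Schur})$-type input is used there. Second, your lacunary-series assembly has a genuine gap as stated: even if every sequence $(\psi_n)$ with $\psi_n|_{B_n}\equiv 1$ must have super-polynomially growing $\|\psi_n\|_{M_0A}$, this does not by itself prevent the sum $\Phi=\sum_k c_k(\varphi_{n_{k+1}}-\varphi_{n_k})$ from lying in $M_0A(\Gamma)$. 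To get a contradiction from $\Phi\in M_0A(\Gamma)$ you would need to recover the pieces (or partial sums) from $\Phi$ by multiplication with functions of controlled $M_0A$-norm, and the characteristic functions of the annuli $B_{n_{k+1}}\setminus B_{n_k}$ are not known to be such multipliers for a general group --- this is precisely the extra structure (Leinert/lacunary sets, as in Bo\.{z}ejko's construction for $F_2$) that makes such arguments work in special cases and is unavailable for an arbitrary non-amenable $\Gamma$.
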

The "only if" part is already known even for locally compact groups (see \cite{Nebbia1982,Losert1984}).
On the other hand, in  \cite{haagerup2010schur}, Haagerup-Steenstrup-Szwarc constructed an explicit Fourier multiplier of the free group $F_2$ of two generators that is not completely bounded. Also in \cite{bozejko1981remark}, Bo\.{z}ejko used infinite lacunary sets to prove the same result. Their result directly implies Theorem A since any lattice of $SL(3,\R)$ contains a copy of $F_2$. We note, however, that our proof for lattices of $SL(3,\R)$ differs for being not constructive and for using tame cuts.

Our next result is about the \textit{induction map} defined as \begin{align*}
\Phi:\ell^\infty (\Gamma) \rightarrow C_b (G),\quad \varphi\mapsto\widehat{\varphi} = \n1_{\Omega} * (\varphi \mu_\Gamma)*\widetilde{\n1}_{\Omega}.
\end{align*} Recall that this map is used to prove that the properties of amenability, a-T-menability, weak amenability, approximation property, and property (T) are inherited by lattices. This is achieved by showing its restrictions $\Phi: A(\Gamma)\rightarrow A(G)$ and $\Phi:M_0A(\Gamma)\rightarrow M_0A(G)$ are continuous. When $G$ is amenable, the map $\Phi:MA(\Gamma)\rightarrow MA(G)$ is also well defined and continuous. This is in general not true as the following theorem shows.
\begin{thmB}\label{thmB}
Let $\Gamma$ be a uniform lattice in $G=SL(3,\R)$. Then the map
\begin{align*}
\Phi:MA(\Gamma)\rightarrow MA(G), \quad \varphi\mapsto\widehat{\varphi} = \n1_{\Omega} * (\varphi \mu_\Gamma)* \widetilde{\n1}_{\Omega}
\end{align*} is not bounded.
\end{thmB}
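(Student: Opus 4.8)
The plan is to argue by contradiction, exploiting the tension between two facts: the lattice $\Gamma$ is rich enough to carry tame cuts, whereas the ambient group $G=SL(3,\R)$ is too rigid to carry any. Suppose $\Phi$ were bounded, say $\|\widehat{\varphi}\|_{MA(G)}\le K\|\varphi\|_{MA(\Gamma)}$ for all $\varphi$. I would take a sequence of tame cuts on $\Gamma$, push them through $\Phi$, show the images are tame cuts on $G$, and then contradict the fact that property $(T_{Schur},G,K)$ (which $SL(3,\R)$ enjoys) opposes tame cuts, i.e.\ forbids $G$ from admitting any. It is worth stressing at the outset that the analogous induction map on $M_0A$ is \emph{always} bounded; so the phenomenon must be located in the plain multiplier norm, and the proof will hinge on $\Gamma$ having plain-norm tame cuts while $G$ has none.

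First I would fix a relatively compact Borel fundamental domain $\Omega$ for $\Gamma\backslash G$ normalised so that $|\Omega|=1$, together with proper length functions $\ell_\Gamma,\ell_G$ for which the inclusion $(\Gamma,\ell_\Gamma)\hookrightarrow(G,\ell_G)$ is a quasi-isometry (available because $\Gamma$ is cocompact). Since cocompact lattices of $SL(3,\R)$ have the rapid decay property (Lafforgue), $\Gamma$ admits tame cuts $(\varphi_n)$ for $(\Gamma,\ell_\Gamma)$: finitely supported functions with $\varphi_n|_{B_n}\equiv 1$ and $\|\varphi_n\|_{MA(\Gamma)}\le Cn^a$.

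Next I would compute the induced functions. From $\bigl(\n1_\Omega*\delta_\gamma*\widetilde{\n1}_\Omega\bigr)(g)=|\Omega\gamma\cap g\Omega|$ one gets $\widehat{\varphi_n}(g)=\sum_{\gamma}\varphi_n(\gamma)\,|\Omega\gamma\cap g\Omega|$, which is a weighted average, since $\sum_\gamma|\Omega\gamma\cap g\Omega|=|g\Omega|=1$ as the translates $\{\Omega\gamma\}$ tile $G$. Two observations then suffice. Each $\widehat{\varphi_n}$ is a finite sum of functions $\n1_{\Omega\gamma}*\widetilde{\n1}_\Omega$, each of the form $f*\widetilde{g}$ with $f,g\in L^2(G)$, hence lies in $A(G)\subseteq MA(G)$ and is compactly supported and continuous; and the $\gamma$-term is nonzero only when $g\in\Omega\gamma\Omega^{-1}$, i.e.\ $\gamma\in\Omega^{-1}g\Omega$. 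Combining the latter with the quasi-isometry and $\mathrm{diam}\,\Omega<\infty$, there are constants $\kappa>0$ and $R\ge 0$ such that $\ell_G(g)\le\kappa n-R$ forces every contributing $\gamma$ to satisfy $\ell_\Gamma(\gamma)\le n$; for such $g$ all weights see $\varphi_n(\gamma)=1$, so $\widehat{\varphi_n}(g)=\sum_\gamma|\Omega\gamma\cap g\Omega|=1$. Thus $\widehat{\varphi_n}\equiv 1$ on $B_{\kappa n-R}$.

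Finally, under the boundedness hypothesis $\|\widehat{\varphi_n}\|_{MA(G)}\le KCn^a$, so after reindexing (take $\psi_m=\widehat{\varphi_{n(m)}}$ with $n(m)=\lceil (m+R)/\kappa\rceil$, giving $\psi_m|_{B_m}\equiv 1$ and $\|\psi_m\|_{MA(G)}\le C'm^a$) the family $(\psi_m)$ is a sequence of tame cuts for $(G,\ell_G)$, contradicting that $(T_{Schur},G,K)$ opposes tame cuts. Hence $\Phi$ is unbounded. I expect the main obstacle to be the third paragraph: verifying that the induced functions are \emph{genuine} tame cuts on $G$, and in particular obtaining the \emph{exact} equality $\widehat{\varphi_n}\equiv 1$ on a ball whose radius grows linearly in $n$. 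This rests on the partition-of-unity identity $\sum_\gamma|\Omega\gamma\cap g\Omega|=1$ together with the cocompactness quasi-isometry, and it is here that the matching of norms must be respected — one transports \emph{plain} $MA$ tame cuts (supplied by rapid decay on $\Gamma$) against the \emph{plain} $MA$ rigidity supplied by $(T_{Schur})$ on $G$.
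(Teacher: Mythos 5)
Your overall architecture --- assume $\Phi$ bounded, transport the RD-derived tame cuts $(\varphi_n)$ from $\Gamma$ to $G$, verify that the induced functions $\widehat{\varphi_n}$ are again tame cuts on $G$ (the norm bound by hypothesis, and $\widehat{\varphi_n}\equiv 1$ on a linearly growing ball via the partition identity $\sum_\gamma|\Omega\gamma\cap g\Omega|=1$ together with cocompactness), and then invoke a rigidity statement forbidding tame cuts on $G=SL(3,\R)$ --- is exactly the paper's, and your second and third paragraphs are correct. The gap is in the final step: you derive the contradiction from ``property $(T_{Schur},G,K)$, which $SL(3,\R)$ enjoys.'' That premise is not available. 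The only known groups with property $(T_{Schur},G,K,\ell)$ are Liao's non-archimedean examples ($Sp_4$ over $\mathbb{F}_q((\pi))$ and a subgroup of its lattice); the property is not established for $SL(3,\R)$, and the paper explicitly states it has no examples to which the real-Lie-group corollaries of Theorem C apply. Moreover, even if it held, Theorem C only excludes \emph{$K$-bi-invariant} tame cuts, so an averaging step would still be needed.

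What actually closes the argument is the Lafforgue--de la Salle inequality (Theorem \ref{rigidity ineq thm}): for $K=SO(3)$ and every $K$-bi-invariant $\varphi\in C_0(G)$ one has $Ce^{t/3}\left|\varphi\bigl(\mathrm{diag}(e^t,1,e^{-t})\bigr)\right|\le\|\varphi\|_{MA(G)}$. Concretely: replace $\widehat{\varphi_n}$ by its $K$-bi-average $\dot{\widehat{\varphi}}_n$, which by the Cowling--Haagerup lemma (Lemma \ref{KKavg}) does not increase the $MA$-norm and, since the length function $L(x)=\log\|x\|+\log\|x^{-1}\|$ is $K$-bi-invariant, still equals $1$ on a ball of radius comparable to $n$. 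Taking $t$ proportional to $n$ makes the left-hand side grow exponentially in $n$ while the right-hand side is $O(n^a)$, the desired contradiction. Your proof becomes correct once you substitute this rigidity inequality (plus the $K$-averaging) for the unsupported $(T_{Schur})$ claim; everything upstream of that point is sound and matches the paper's argument.
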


In \cite{Lia16}, property $(T_{Schur},G,K)$ was introduced to illustrate that any of the known methods so far to prove Baum-Connes conjecture for a particular group does not work for some groups. We will show in the following theorem that  property $(T_{Schur},G,K)$ also opposes the existence of tame cuts.
\begin{thmC}
Let $H$ be an unbounded closed subgroup of a locally compact group $G$. Suppose that $H$ satisfies property $(T_{Schur},G,K,\ell)$ for a compact subgroup $K$ and a proper length function $\ell$ of $G$. Then $(H,\ell|_H)$ does not admit any $K$-bi-invariant tame cuts. 
\end{thmC}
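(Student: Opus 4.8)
The plan is to argue by contradiction, converting the defining bounds of a hypothetical family of tame cuts into a violation of the spectral gap encoded in property $(T_{Schur},G,K,\ell)$. Suppose $(H,\ell|_H)$ admitted $K$-bi-invariant tame cuts $(\varphi_n)$ (so that, implicitly, $K\subseteq H$ and $K$-bi-invariance of functions on $H$ is meaningful): each $\varphi_n$ is a compactly supported continuous $K$-bi-invariant function on $H$ with $\varphi_n|_{B_n}\equiv 1$ and $\|\varphi_n\|_{MA}\leq Cn^a$. Since $\varphi_n\equiv 1$ on the balls $B_n$, which exhaust $H$, the $\varphi_n$ converge to $1$ uniformly on compacta; and since each $\varphi_n$ is compactly supported while $H$ is unbounded, $\varphi_n$ vanishes off a compact set, so its value at infinity is $\varphi_{n,\infty}=0$.

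Next I would feed these into property $(T_{Schur},G,K,\ell)$. I read that property as furnishing a uniform rate of convergence at infinity for $K$-bi-invariant multipliers: constants $C_0> 0$ and a non-increasing $\beta:\R_{\geq 0}\to\R_{\geq 0}$ with $\beta(r)\to 0$ faster than any polynomial, such that every $K$-bi-invariant $\varphi\in MA(G)$ admits a scalar $\varphi_\infty$ with $|\varphi(h)-\varphi_\infty|\leq C_0\|\varphi\|_{MA}\,\beta(\ell(h))$ for all $h\in H$. This is the only place the rigidity of $H$ inside $G$ enters, and it is where the Schur spectral gap for $K$-bi-invariant multipliers does its work; the point is that the rate $\beta$ is dictated by $(G,K,\ell)$ and is independent of the individual multiplier.

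With this in hand the contradiction is immediate. Because $\ell|_H$ is proper and $H$ is unbounded, the largest length attained inside $B_n\cap H$ tends to infinity, so I may pick $h_n\in H$ with $\ell(h_n)\leq n$, whence $\varphi_n(h_n)=1$, and with $\ell(h_n)\to\infty$. Using $\varphi_{n,\infty}=0$ together with the estimate above, $1=|\varphi_n(h_n)-0|\leq C_0\|\varphi_n\|_{MA}\,\beta(\ell(h_n))\leq C_0C\,n^a\,\beta(\ell(h_n))$, so that $n^a\beta(\ell(h_n))\geq (CC_0)^{-1}>0$ for all $n$. Since $\ell(h_n)\to\infty$ and $\beta$ decays faster than any polynomial, the left-hand side tends to $0$, a contradiction. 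Hence no such family can exist.

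The main obstacle, and the step I would spend the most care on, is the quantitative use of property $(T_{Schur},G,K,\ell)$ in the second paragraph: isolating the value at infinity $\varphi_\infty$ and producing a convergence rate $\beta$ that is genuinely faster than polynomial and uniform over unit-norm $K$-bi-invariant multipliers. A secondary delicate point is the calibration of scales: one needs the length $\ell(h_n)$ at which $\varphi_n$ is tested to grow fast enough relative to the index $n$ to overcome the factor $n^a$. This is precisely why the property is formulated through the ambient geometry $(G,K,\ell)$ rather than through $H$ alone, and why $K$-bi-invariance is indispensable — the spectral gap constrains only $K$-bi-invariant objects, and it is the ambient length that measures the cost of a multiplier being identically $1$ on a large ball while vanishing at infinity.
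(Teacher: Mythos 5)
Your overall strategy (contradiction via the rigidity at infinity encoded in property $(T_{Schur},G,K,\ell)$) is the right one, but the way you invoke the property is not what Definition~\ref{defn propert T schur} actually provides, and the gap is not cosmetic. The hypothesis of the property is not a bound on the global multiplier norm $\|\varphi\|_{MA}$ but on the family of truncated norms $\|\varphi\|_{MA(H,n)}=\sup\{\|\lambda_H(\varphi|_H\psi)\|:\psi\in C_c(H),\ \supp(\psi)\subseteq B_n,\ \|\lambda_H(\psi)\|\leq 1\}$, required to satisfy $\|\varphi\|_{MA(H,n)}\leq De^{sn}$ for all $n$; and its conclusion is only $|\varphi(x)-\varphi_\infty|\leq D\phi(x)$ with $\phi\in C_0(G)$ --- a majorant vanishing at infinity with \emph{no rate whatsoever}. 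Your $\beta$ ``decaying faster than any polynomial'' is an unwarranted strengthening of the definition, and your argument genuinely needs it: without a rate you cannot conclude $n^a\beta(\ell(h_n))\to 0$. Moreover, even granting such a $\beta$, the calibration problem you flag as ``secondary'' is fatal as stated: properness and unboundedness only give $\ell(h_n)\to\infty$, not $\ell(h_n)\asymp n$ (the set of lengths attained in $H$ below $n$ could be very sparse), so $n^a\beta(\ell(h_n))$ need not tend to $0$.

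The paper's proof avoids both problems by exploiting the truncated norms, which your proposal never uses. The key computation is: for $f\in C_c(H)$ with $\supp(f)\subseteq B_n$, if $m\geq n$ then $\varphi_m f=f$, while if $m<n$ then $\|\lambda_H(\varphi_m f)\|\leq Cm^a\|\lambda_H(f)\|\leq Cn^a\|\lambda_H(f)\|$; hence $\|\varphi_m'\|_{MA(H,n)}\leq Cn^a\leq De^{sn}$ with $D=\sup_n Cn^ae^{-sn}$ \emph{independent of $m$}. The property then yields $|\varphi_m(x)|\leq D\phi(x)$ for all $x\in H$ and all $m$ simultaneously (the limit $\varphi_{m,\infty}$ is $0$, as you correctly observed). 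Taking $\lim_{x\to\infty}\lim_{m\to\infty}$: for fixed $x$ the left-hand side is eventually $1$, while the right-hand side is independent of $m$ and tends to $0$ as $x\to\infty$. No decay rate and no choice of test points $h_n$ is needed. To repair your write-up you must (i) replace your quantitative reading of the property by its actual statement involving $\|\cdot\|_{MA(H,n)}$, and (ii) prove the $m$-uniform exponential bound on these truncated norms, which is precisely the step your proposal omits.
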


For discrete groups, we also consider characteristic functions as it is connected to the rapid decay property.
\begin{defn}
Let $\Gamma$ be a discrete group, and $\ell$ a proper length function on $\Gamma$. If $(\varphi_n)$ is a sequence of characteristic functions that gives (completely bounded) tame cuts for $(\Gamma,\ell)$,  then we call $(\varphi_n)$ \textit{(completely bounded) characteristic tame cuts} for $(\Gamma,\ell)$.
\end{defn}

The behaviors of the operator norm of reduced group $C^*$-algebra $C^*_\lambda (\Gamma)$ are notoriously difficult to understand. For instance, Valette's conjecture on rapid decay for uniform lattices in higher rank Lie groups is still open. Different approaches to understand the operator norm are studied for some particular cases: when $G$ is amenable, we have $\|\lambda(f)\| = \|f\|_1$  for all non-negative functions $f\in C_c(G)$, and when $G$ is a connected simple Lie group, the Harish-Chandra spherical function $\phi_0$ satisfies $\|\lambda(f)\|  =\int_G \phi_0(x)f(x) dx$ for non-negative $f\in C_c(G)$. On the other hand, the existence of (completely bounded) characteristic tame cuts $(\varphi_n)$ suggests asymptotic information of how the operator norm is changed after truncating (or cutting off) a function $f\in C^*_\lambda (\Gamma)$ to the set $\supp (\varphi_n)$. In many spaces, the truncation process defines a norm decreasing operators. For instance, $\ell^p (G)$ for $1\leq p\leq \infty$ and more generally the space $C_c(\Gamma)$ endowed with an unconditional norm $N$, that is  $N(f) \leq N(g)$ whenever $|f|\leq |g|$. However, the situation is very different in $C^*_\lambda (\Gamma)$.
The simplest illustration can be seen in the case $\Gamma=\Z$. We identify $C^*_\lambda (\Z)$ and $C(\T)$  via Gelfand transform. Then the truncation operator associated to the characteristic function $\varphi_n = \n1_{[-n,n]\cap \Z}$ corresponds to the Dirichlet's kernel $D_n (t)= \sum_{j=-n}^n e^{i2\pi jt}$ in $L^1(\T)$ and it is well known that 
\[\|M_{\varphi_n}:{C^*_\lambda(\Z)\rightarrow C^*_\lambda(\Z)}\| = \|D_n\|_1 = \dfrac{4}{\pi}\log n+ O(1).\] This shows the operator norm of $C^*_\lambda (\Z)$ is not unconditional. In \cite{1981hardyineq}, it is shown that this logarithmic rate is asymptotically the best while the support of the truncation operator grows at least linearly. This is why we expect the truncation operators on $C^*_\lambda (\Gamma)$ to have bigger norms as its support grows, and we study characteristic tame cuts to understand their norm growth rate. In the following theorem, we provide some examples with (completely bounded) characteristic tame cuts.
\begin{thmD} The following groups have completely bounded characteristic tame cuts.
\begin{enumerate}[(i)]
\item $\Z^d\rtimes_A \Z$ for any $A\in SL(d,\Z)$.
\item $\Z [\frac{1}{pq}] \rtimes_{\frac{p}{q}} \Z$ for any coprime $p,q\in\N$.
\item Lamplighter groups $\Z_p \wr \Z$ for any $p\in \N$.
\item Baumslag-Solitar groups $BS(p,q)=\langle a,t\mid ta^pt^{-1}=a^q\rangle$ for any $p,q\in\N$.
\end{enumerate}
\end{thmD}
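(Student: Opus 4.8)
The plan is to treat all four families uniformly as semidirect products $\Gamma=N\rtimes_\alpha\Z$ with $\pi\colon\Gamma\to\Z$ the quotient onto the $\Z$-factor: here $N=\Z^d$, $\Z[\tfrac1{pq}]$, $\bigoplus_{\Z}\Z_p$, and (for $BS(p,q)$, via $t\mapsto1$, $a\mapsto0$) the normal closure of $a$. Write $g=(c(g),\pi(g))$ where $c\colon\Gamma\to N$ is the $N$-coordinate cocycle. Since $t$ is a generator, $\pi$ is $1$-Lipschitz, so every $g\in B_n$ has $|\pi(g)|\le n$, and the finite set $F_n:=\{c(g):g\in B_n\}\subseteq N$ records the kernel coordinates. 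I would set $E_n:=\{g:\pi(g)\in[-n,n],\ c(g)\in F_n\}=\bigcup_{|j|\le n}F_n t^j$, a finite set containing $B_n$, and estimate $\|\n1_{E_n}\|_{M_0A}$ slice by slice.

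The workhorse is a \emph{slice estimate}: for each fixed $j$ the slice indicator satisfies $\|\n1_{F_n t^j}\|_{M_0A(\Gamma)}\le\|\n1_{F_n}\|_{M_0A(N)}$. Indeed $c(gh^{-1})=c(g)\,\alpha^{\pi(g)-\pi(h)}(c(h)^{-1})$, so on the support of the slice (where $\pi(g)-\pi(h)=j$) one has $\n1_{F_n t^j}(gh^{-1})=\n1_{\{\pi(g)-\pi(h)=j\}}\cdot\n1_{F_n}\bigl(c(g)\,\alpha^j(c(h))^{-1}\bigr)$. Taking a Gilbert--Haagerup realization $\n1_{F_n}(uv^{-1})=\langle P(u),Q(v)\rangle$ on $N$ with $\|P\|_\infty\|Q\|_\infty=\|\n1_{F_n}\|_{M_0A(N)}$, the vectors $\xi(g)=P(c(g))\otimes\delta_{\pi(g)}$ and $\eta(h)=Q(\alpha^j(c(h)))\otimes\delta_{\pi(h)+j}$ in $\mathcal K\otimes\ell^2(\Z)$ reproduce the slice and have the claimed norms; the point is that the twist $\alpha^j$ can be absorbed into $\eta$ precisely because $j$ is \emph{fixed} on each slice. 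Summing over $|j|\le n$ by the triangle inequality gives $\|\n1_{E_n}\|_{M_0A}\le(2n+1)\,\|\n1_{F_n}\|_{M_0A(N)}$, reducing everything to a polynomial bound on the kernel-box norms.

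These norms I would then bound case by case. For the lamplighter (iii), $F_n$ lies in the \emph{finite subgroup} $\bigoplus_{|i|\le n}\Z_p\le N$, and the indicator of a subgroup is positive definite of $M_0A$-norm $1$, yielding $\|\n1_{E_n}\|_{M_0A}=O(n)$. For (i), since $c$ is an $A$-twisted cocycle, $F_n$ sits in a box $[-R_n,R_n]^d$ with $R_n\le n\rho^n$ ($\rho=\max(\|A\|,\|A^{-1}\|)$); tensoring the one-dimensional estimate $\|\n1_{[-R,R]}\|_{M_0A(\Z)}=\|D_R\|_1\sim\tfrac4\pi\log R$ from the introduction gives $\|\n1_{F_n}\|_{M_0A(\Z^d)}\sim(\log R_n)^d\sim n^d$, hence $O(n^{d+1})$. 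For (ii), $F_n$ is an interval inside $\tfrac1{(pq)^n}\Z\cong\Z\le\Z[\tfrac1{pq}]$ of length $\sim n(pq)^{2n}$; computing the $MA$-norm through the Pontryagin dual (the $pq$-solenoid) reduces $\|\n1_{F_n}\|_{MA}$ to the total variation of the associated Dirichlet measure, which is $\sim\log(\text{length})\sim n$, hence $O(n^2)$. All bounds are $M_0A$-norms, so the cuts are completely bounded.

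The main obstacle is $BS(p,q)$ with $|p|,|q|\ge2$, where the kernel $N=\cdots *_{\Z}\langle a_{-1}\rangle *_{\Z}\langle a_0\rangle *_{\Z}\langle a_1\rangle *_{\Z}\cdots$ (consecutive copies of $\Z$ amalgamated along $\langle a_i^{\,q}\rangle=\langle a_{i+1}^{\,p}\rangle$) is non-abelian and non-amenable, so neither the dual computation nor the subgroup trick is available; here $F_n$ is an honest word-ball in $N$. My approach would be a tree-slice analogue of the lemma above, exploiting that $N$ acts on its Bass--Serre tree $T$ with infinite cyclic vertex and edge stabilizers. The indicator $\sigma_r$ of $\{g:d_T(v_0,gv_0)=r\}$ is bi-$\mathrm{Stab}(v_0)$-invariant and, by the Haagerup-type estimate for functions of the tree distance, should have $M_0A$-norm $O(r)$; one then truncates along each infinite cyclic stabilizer by a Dirichlet cut (cost $O(\log)$) and transfers these stabilizer realizations across vertices, exactly as the twist $\alpha^j$ was absorbed above. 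The delicate point—and the step I expect to be hardest—is making this transfer uniform over the \emph{conjugate, hence varying} stabilizers while keeping both the number of slices and the per-slice norm polynomial; granting this, the conclusion for (iv) is again a polynomial bound in $n$.
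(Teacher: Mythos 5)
For parts (i)--(iii) your argument is, up to packaging, the paper's own. Your ``slice estimate'' --- decomposing the cut as $\bigcup_{|j|\le n}F_nt^j$ and bounding each slice by $\|\n1_{F_n}\|_{M_0A(N)}$ --- is exactly the paper's reduction via polynomial co-growth of the kernel (Proposition \ref{polynomialext}) specialized to $\Gamma/N\cong\Z$: each slice is a translate $\n1_{F_n}*\delta_{t^j}$, translation is an isometry of $M_0A(\Gamma)$, and the extension by zero from the open subgroup $N$ is isometric (Lemma \ref{extension from open subgroup}), so your explicit Gilbert--Haagerup absorption of the twist $\alpha^j$ is not even needed. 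The per-case bounds on the kernel boxes (the finite subgroup $\bigoplus_{|i|\le n}\Z_p$ for the lamplighter; the cube $[-nC^n,nC^n]^d$ with Dirichlet-kernel norm $\sim n^d$ for $\Z^d\rtimes_A\Z$; an interval of length $\sim n(pq)^{2n}$ inside the cyclic subgroup generated by $(pq)^{-n}$ for $\Z[\frac1{pq}]\rtimes\Z$) also coincide with the paper's. One small simplification over your sketch for (ii): rather than computing on the Pontryagin dual of $\Z[\frac1{pq}]$ (the solenoid), the paper restricts to the cyclic subgroup containing $F_n$, where amenability and the isometric extension reduce everything to the classical $L^1(\T)$ Dirichlet estimate \eqref{hardyeq}.

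The genuine gap is (iv). For general $p,q\ge 2$ the kernel $N$ of $BS(p,q)\to\Z$ is an infinite iterated amalgam of copies of $\Z$, non-amenable and not finitely generated, and the set $F_n$ of kernel coordinates of ball elements is not contained in any tractable subgroup; bounding $\|\n1_{F_n}\|_{M_0A(N)}$ polynomially is essentially as hard as the theorem itself. Your proposed tree-slice argument leaves exactly the hard step open --- the uniform transfer of stabilizer truncations across the varying conjugate (infinite cyclic) stabilizers with polynomial control of both the number of slices and the per-slice norm --- and you acknowledge this yourself, so part (iv) is a program, not a proof. The paper avoids the kernel entirely: by Gal--Januszkiewicz (Theorem \ref{thmGal}), the diagonal map $j=(j_1,j_2):BS(p,q)\to \Aut(T)\times(\Z[\frac1{pq}]\rtimes_{\frac{p}{q}}\Z)$, with $T$ the Bass--Serre tree of the HNN splitting, is a \emph{closed} embedding. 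One then takes the Bo\.{z}ejko--Picardello cut $\phi_n=\n1_{\{x:\,d(v_0,xv_0)\le n\}}$ on the first factor, with $\|\phi_n\|_{M_0A}\le 2n+1$ (Lemma \ref{treelem}), tensors it with the cuts $\psi_n$ from case (ii) on the second factor, and restricts to $BS(p,q)$ using that restriction is $M_0A$-norm decreasing (Proposition \ref{restriction map}). Properness of the diagonal embedding is the key point that makes $\supp(\phi_n\otimes\psi_n)\cap BS(p,q)$ finite even though neither factor's support is finite in $BS(p,q)$ alone. You should replace your sketch for (iv) by this argument (or supply the missing uniformity estimate, which I do not see how to do); parts (i)--(iii) stand as written.
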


The paper is organized as follows. In Section \ref{sec defs}, we list out preliminary definitions and results. In Section \ref{PV}, we discuss how the tame cuts are connected to weak amenability and rapid decay property, and provide the first examples. In Section \ref{application}, we prove Theorem A and Theorem B. In Section \ref{PropertTSchur}, we prove Theorem C. In Section \ref{sec examples}, we prove Theorem D.

\section{Preliminaries}\label{sec defs}
\subsection{Group algebras}
Let $G$ be a locally compact group. We fix a Haar measure $dx$ on $G$. The left $G$-translations on itself induces \textit{the (left) regular representation} $\lambda: G\rightarrow \mathcal{U}(L^2(G))$. This gives rise to the *-representation
\begin{align*}
\lambda:L^1 (G) \rightarrow \B(L^2(G)), \quad \lambda(f)\xi = f*\xi.
\end{align*} The \textit{(left) reduced} $C^*$\textit{-algebra} $C^*_\lambda (G)$ of $G$ is the completion of $\lambda(L^1(G))$ in $\B(L^2(G))$ with respect to the operator norm topology. The \textit{group von Neumann algebra} $L(G)$ is the completion of $\lambda (L^1(G))$ in $\B (L^2(G))$ with respect to the weak* topology. Another important algebra is the \textit{Fourier algebra}, 
\begin{align*}
A(G) = \left\{\varphi = \sum_i \xi_i *\eta_i : \xi_i,\eta_i\in L^2(G), \sum_{i} \|\xi_i\|_2\|\eta_i\|_2<\infty\right\}.
\end{align*}
This is a commutative Banach algebra endowed with pointwise multiplication and the norm $\|\varphi\|_A = \inf \sum_{i} \|\xi_i\|_2\|\eta_i\|_2$, where $\xi_i$'s and $\eta_i$'s run in $L^2 (G)$ under the condition $\varphi = \sum_i \xi_i *\eta_i$. It turns out that the unique Banach predual of $L(G)$ is the Fourier algebra $A(G)$. The duality is given by $\langle \lambda(f) , \varphi\rangle = \int_G f(x)\varphi(x)dx$ for all $f\in L^1(G)$ and $\varphi\in A(G)$. 
We refer to Eymard's paper \cite{eymard1964algebre} for further properties  of these algebras. 

Let $\varphi$ be a function on $G$, and $F$ any set of functions on $G$. We say $\varphi$ multiplies $F$ if $\varphi F \subseteq F$. A function $\varphi$ on $G$ is called a \textit{Fourier multiplier } if it multiplies the Fourier algebra $A(G)$. By the closed graph theorem, the multiplication operator $m_\varphi:A(G)\rightarrow A(G)$ corresponding to a Fourier multiplier $\varphi$ is continuous, and the operator norm $\|\varphi\|_{MA} = \sup \{\|\varphi \psi\|_A: \psi\in A(G), \|\psi\|_A =1\}$ is well defined. It turns out that the space $MA(G)$ of all Fourier multipliers is a Banach algebra with respect to pointwise multiplication.

It is clear that any continuous function $\varphi$ multiplies $C_c(G)$. Denote by $m_{\varphi}:C_c(G)\rightarrow C_c(G)$ the corresponding multiplier map. Duality and restriction arguments show the followings are equivalent:
\begin{enumerate}[(i)]
\item $\varphi\in MA(G)$.
\item $m_\varphi$ extends to a weak*-continuous map $m_\varphi^* : L(G)\rightarrow L(G)$.
\item $m_\varphi$ extends to a norm-continuous map $m_\varphi': C^*_\lambda (G) \rightarrow C^*_\lambda (G)$.
\end{enumerate}
A Fourier multiplier $\varphi\in MA(G)$ is said \textit{completely bounded} if $m_\varphi^*$ (or equivalently $m_\varphi'$) is completely bounded, that is the quantity
\begin{align*}
\|\varphi\|_{M_0A} = \sup_{n\in\N} \|m_\varphi^* \otimes I_n :L(G) \otimes M_n(\C)\rightarrow L(G) \otimes M_n(\C)\|
\end{align*} is finite. 
Denote by $M_0A(G)$ all completely bounded Fourier multipliers. Again, it is a Banach algebra with respect to pointwise multiplication. Compared to Fourier multipliers, the characterizations of completely bounded Fourier multipliers are much more rich. Below, we list out some important results that we will use.
\begin{thm}[de Canniere-Haagerup, \cite{dCH1985}]
Let $\varphi$ be a Fourier multiplier of $G$. Put $K = SU(2)$. Then the following conditions are equivalent:
\begin{enumerate}[(i)]
\item $\varphi\in M_0A(G)$ with $\|\varphi\|_{M_0A(G)}\leq C$.
\item $\varphi\otimes \n1_K\in MA(G\times K)$ with $\|\varphi\otimes \n1_K\|_{MA(G\times K)}\leq C$.
\item For every locally compact group $H$, we have $\varphi\otimes \n1_H\in MA(G\times H)$ with $\|\varphi\otimes \n1_H\|_{MA(G\times H)}\leq C$.
\item There exist bounded continuous maps $P,Q$ from $G$ to a Hilbert space $\mathcal{H}$ with $\varphi (y^{-1}x) = \langle P(x),Q(y)\rangle$ for all $x,y\in G$ and $\sup_{x,y\in G} \|P(x)\| \|Q(y)\|\leq C$.
\end{enumerate}
\end{thm}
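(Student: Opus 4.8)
The plan is to prove that the four conditions are equivalent by establishing the implications (iv)$\Rightarrow$(i), (i)$\Rightarrow$(iv), (i)$\Leftrightarrow$(ii), and (i)$\Rightarrow$(iii)$\Rightarrow$(ii); the heart of the matter is the equivalence of (i) and (iv) through the theory of Schur multipliers. I first recall that the extended multiplier $m_\varphi^*:L(G)\to L(G)$ is determined on generators by $\lambda(g)\mapsto\varphi(g)\lambda(g)$, as follows from the duality $\langle m_\varphi^*\lambda(g),\psi\rangle=\langle\lambda(g),\varphi\psi\rangle=\varphi(g)\psi(g)$. For (iv)$\Rightarrow$(i) I would realize $m_\varphi^*$ as a sandwich by bounded operators. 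Given the factorization $\varphi(y^{-1}x)=\langle P(x),Q(y)\rangle$, define $V_P,V_Q:L^2(G)\to L^2(G)\otimes\mathcal{H}$ by $(V_P\xi)(x)=\xi(x)\otimes P(x)$ and $(V_Q\eta)(y)=\eta(y)\otimes Q(y)$, so that $\|V_P\|=\sup_x\|P(x)\|$ and $\|V_Q\|=\sup_y\|Q(y)\|$. A direct computation on the generators $\lambda(g)$ (after the standard inversion adjustment of conventions relating left and right translations) shows $m_\varphi^*(T)=V_Q^*(T\otimes I_\mathcal{H})V_P$ for all $T\in L(G)$. Since $T\mapsto T\otimes I_\mathcal{H}$ is a normal $*$-homomorphism, hence a complete isometry, and conjugation by bounded maps does not increase the cb-norm, this yields $\|\varphi\|_{M_0A}\leq\|V_P\|\,\|V_Q\|=\sup_{x,y}\|P(x)\|\,\|Q(y)\|\leq C$.

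For the reverse implication (i)$\Rightarrow$(iv) I would transfer the problem to Schur multipliers. The completely bounded multiplier norm of $\varphi$ coincides with the completely bounded Schur-multiplier norm of the left-invariant Toeplitz kernel $k(x,y)=\varphi(y^{-1}x)$ acting on $\B(L^2(G))$. The classical Grothendieck--Haagerup characterization of (completely) bounded Schur multipliers then produces bounded fields $x\mapsto P(x)$ and $y\mapsto Q(y)$ into a Hilbert space $\mathcal{H}$ with $k(x,y)=\langle P(x),Q(y)\rangle$ and $\sup_{x,y}\|P(x)\|\,\|Q(y)\|\leq C$. I expect the main obstacle to lie precisely here: one must (a) justify the identification of the Fourier-multiplier cb-norm with the Schur-multiplier cb-norm of the Toeplitz kernel, and (b) upgrade the abstract, merely measurable factorization furnished by the Schur-multiplier theorem to the \emph{continuous} maps required in (iv). For (b) I would exploit the left $G$-invariance $k(gx,gy)=k(x,y)$ of the kernel together with the continuity of $\varphi$, smoothing $P$ and $Q$ by an averaging or approximation argument while preserving the pointwise bound.

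It remains to connect (i) with the product-group statements (ii) and (iii). For (i)$\Leftrightarrow$(ii) I would use the representation theory of $K=SU(2)$: its irreducible representations occur in every dimension $n\in\N$, so $L(K)=\bigoplus_{n} M_n(\C)$ and consequently $L(G\times K)=\bigoplus_{n} L(G)\,\overline{\otimes}\,M_n(\C)$. Under this decomposition the operator $m_{\varphi\otimes\n1_K}^*$ acts on the $n$-th block as $m_\varphi^*\otimes\id_{M_n}$, so that $\|\varphi\otimes\n1_K\|_{MA(G\times K)}=\sup_n\|m_\varphi^*\otimes I_n\|=\|\varphi\|_{M_0A}$. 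Thus (ii) with constant $C$ is literally equivalent to (i) with constant $C$. Finally, (i)$\Rightarrow$(iii) follows because tensoring with a constant function is isometric for the completely bounded norm, giving $\|\varphi\otimes\n1_H\|_{M_0A(G\times H)}=\|\varphi\|_{M_0A(G)}\leq C$ and hence a fortiori $\|\varphi\otimes\n1_H\|_{MA(G\times H)}\leq C$, while (iii)$\Rightarrow$(ii) is the specialization $H=K$. Among all these steps, the continuity upgrade in the Schur-multiplier factorization is the delicate point I would treat most carefully.
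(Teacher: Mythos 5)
This statement is quoted in the paper's preliminaries as a known result of de Canni\`ere--Haagerup (together with the Gilbert--Jolissaint--Bo\.{z}ejko--Fendler characterization in (iv)); the paper gives no proof of it, so there is nothing internal to compare your argument against. On its own terms your outline is the standard one from the literature and is essentially correct: the sandwich $T\mapsto V_Q^*(T\otimes I_{\h})V_P$ for (iv)$\Rightarrow$(i) (your remark about the inversion of conventions is exactly right, since $\lambda(f)$ has kernel $f(xy^{-1})$ while the factorization is stated for $\varphi(y^{-1}x)$); the Peter--Weyl decomposition $L(SU(2))\cong\bigoplus_n M_n(\C)$, which realizes $\sup_n\|m_\varphi^*\otimes I_n\|$ literally as $\|\varphi\otimes\n1_K\|_{MA(G\times K)}$, for (i)$\Leftrightarrow$(ii); and stability of the cb-norm under tensoring with $\id_{L(H)}$ for (i)$\Rightarrow$(iii). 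The one place where you should be candid about what is being proved versus what is being invoked is (i)$\Rightarrow$(iv): the transference identifying the cb Fourier-multiplier norm with the cb Schur-multiplier norm of the Toeplitz kernel (Bo\.{z}ejko--Fendler) and the Grothendieck--Haagerup factorization are themselves theorems of depth comparable to the statement at hand, so this direction is a reduction rather than a proof. Your proposed fix for the continuity issue is viable if executed as follows: replace $P,Q$ by $\tilde P(x)=(P(gx))_{g\in U}$, $\tilde Q(y)=|U|^{-1}(Q(gy))_{g\in U}$ in $L^2(U,\h)$ for a compact neighborhood $U$ of $e$; left-invariance of the kernel preserves the identity $\langle\tilde P(x),\tilde Q(y)\rangle=\varphi(y^{-1}x)$ and the sup bounds, and continuity of translation in $L^2_{\mathrm{loc}}$ makes $\tilde P,\tilde Q$ continuous, after which the a.e.\ identity becomes an everywhere identity by continuity of $\varphi$. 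An alternative that avoids measurable Schur multipliers altogether is Jolissaint's route via the Wittstock--Paulsen representation $m_\varphi^*=V^*\pi(\cdot)W$, which produces continuous $P,Q$ directly.
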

\begin{thm}[\cite{Nebbia1982,Losert1984}]
When $G$ is amenable, the Banach algebras $M_0A(G)$ and $MA(G)$ coincide, and the inclusion $A(G)\rightarrow MA(G)$ is isometric.
\end{thm}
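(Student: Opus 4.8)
The plan is to deduce the whole statement from one consequence of amenability: by Leptin's characterization, $A(G)$ admits an approximate identity bounded by $1$. Concretely, there is a net $(e_i)$ in $A(G)$ with $\sup_i\|e_i\|_A\leq 1$, with $\varphi e_i\to\varphi$ in $\|\cdot\|_A$ for every $\varphi\in A(G)$, and which may be taken to consist of compactly supported functions converging to $1$ uniformly on compact sets. Throughout I will use the two elementary contractive inclusions $A(G)\subseteq M_0A(G)\subseteq MA(G)$, i.e. $\|\psi\|_{MA}\leq\|\psi\|_{M_0A}\leq\|\psi\|_A$, valid for any locally compact group; the content of the theorem is that for amenable $G$ these inequalities are equalities.

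First I would prove that $A(G)\rightarrow MA(G)$ is isometric. For $\varphi\in A(G)$ the trivial inclusion gives $\|\varphi\|_{MA}\leq\|\varphi\|_A$. For the reverse, apply the multiplication operator $m_\varphi$ to the approximate identity: since $(e_i)$ is an approximate identity for the Banach algebra $A(G)$ and $\varphi\in A(G)$, we have $m_\varphi(e_i)=\varphi e_i\to\varphi$ in $\|\cdot\|_A$, so that
\[
\|\varphi\|_A=\lim_i\|\varphi e_i\|_A=\lim_i\|m_\varphi(e_i)\|_A\leq\|m_\varphi\|\,\limsup_i\|e_i\|_A\leq\|\varphi\|_{MA}.
\]
Hence $\|\varphi\|_A=\|\varphi\|_{MA}$, and since the intermediate norm is squeezed we also get $\|\varphi\|_A=\|\varphi\|_{M_0A}=\|\varphi\|_{MA}$ on $A(G)$.

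Next I would establish the harder inclusion $MA(G)\subseteq M_0A(G)$ isometrically. Fix $\varphi\in MA(G)$; note $\varphi$ is continuous and $\varphi e_i\to\varphi$ pointwise because $e_i\to1$ pointwise. Each $\varphi e_i$ lies in $A(G)$, and by the contractive inclusion together with the multiplier bound,
\[
\|\varphi e_i\|_{M_0A}\leq\|\varphi e_i\|_A=\|m_\varphi(e_i)\|_A\leq\|\varphi\|_{MA}\,\|e_i\|_A\leq\|\varphi\|_{MA}.
\]
Thus $(\varphi e_i)$ lies in the closed ball of radius $\|\varphi\|_{MA}$ of $M_0A(G)$. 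The key structural input is that $M_0A(G)$ is a dual Banach space, with predual the completion $Q(G)$ of $L^1(G)$ in the appropriate norm, so this ball is weak-$*$ compact by Banach--Alaoglu and $(\varphi e_i)$ has a weak-$*$ cluster point $\psi\in M_0A(G)$ with $\|\psi\|_{M_0A}\leq\|\varphi\|_{MA}$. Since the point evaluations lie in $Q(G)$, weak-$*$ convergence entails pointwise convergence; comparing with $\varphi e_i\to\varphi$ pointwise forces $\psi=\varphi$. Therefore $\varphi\in M_0A(G)$ with $\|\varphi\|_{M_0A}\leq\|\varphi\|_{MA}$, and with the trivial reverse inequality this yields $M_0A(G)=MA(G)$ isometrically.

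The step I expect to be the main obstacle is the weak-$*$ compactness argument in the third paragraph: it relies on identifying $M_0A(G)$ as the dual of a concrete predual and on knowing that this predual carries enough functionals (in particular the point evaluations, equivalently the elements of $L^1(G)$) to separate points of $M_0A(G)$ and to detect pointwise limits of bounded nets. Equivalently, one can bypass the abstract duality through the Herz--Schur representation of condition (iv) of the de Canni\`ere--Haagerup theorem: each $\varphi e_i$ admits bounded symbols $P_i,Q_i\colon G\to\h$ with $\sup_{x,y}\|P_i(x)\|\,\|Q_i(y)\|\leq\|\varphi\|_{MA}$, and one extracts a limiting pair $(P,Q)$ along an ultrafilter representing $\varphi(y^{-1}x)=\langle P(x),Q(y)\rangle$; the care required there is in controlling the weak limits of the symbol maps so the limiting kernel genuinely represents $\varphi$. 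Either route reduces the theorem to the single amenability input furnished by Leptin's unit-bounded approximate identity.
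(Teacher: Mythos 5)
The paper itself gives no proof of this theorem: it is quoted from Nebbia and Losert, so your argument can only be measured against the standard proofs in the literature. Your plan is essentially that standard argument: Leptin's theorem supplies an approximate identity $(e_i)$ of $A(G)$ with $\sup_i\|e_i\|_A\leq 1$; the isometry of $A(G)\rightarrow MA(G)$ follows exactly as you write it; and the inclusion $MA(G)\subseteq M_0A(G)$ comes from a weak-$*$ compactness argument applied to the bounded net $(\varphi e_i)$. The classical route (Nebbia, Losert, de Canni\`ere--Haagerup) runs the compactness argument in $B(G)=C^*(G)^*$, which yields the stronger conclusion $MA(G)=B(G)$; your variant inside $M_0A(G)=Q(G)^*$ yields exactly the statement at hand. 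Both are legitimate, and your first paragraph and the squeezing of the three norms on $A(G)$ are correct.

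There is, however, one incorrect justification, precisely at the step you flagged as the main obstacle. Point evaluations do \emph{not} lie in $Q(G)$ when $G$ is non-discrete, and weak-$*$ convergence of a bounded net in $M_0A(G)$ does \emph{not} entail pointwise convergence. Counterexample: in $M_0A(\R)$ the characters $\chi_n(t)=e^{int}$ have norm $1$ and converge weak-$*$ to $0$ (test against $f\in L^1(\R)$ using the Riemann--Lebesgue lemma, then use boundedness of the sequence and density of $L^1(\R)$ in $Q(\R)$), yet $\chi_n(0)=1$ for all $n$; in particular evaluation at $0$ is not weak-$*$ continuous, i.e. it is not represented by an element of $Q(\R)$.

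Fortunately, the implication you actually need goes in the opposite direction and is easy to secure. Your net satisfies $\|\varphi e_i\|_\infty\leq\|\varphi\|_\infty<\infty$ (since $\|e_i\|_\infty\leq\|e_i\|_A\leq 1$ and multipliers are bounded continuous functions) and $\varphi e_i\to\varphi$ pointwise, so by dominated convergence $\int (\varphi e_j)f\,dx\to\int\varphi f\,dx$ for every $f\in L^1(G)$. Along the subnet realizing the weak-$*$ cluster point $\psi$, these integrals also converge to $\int\psi f\,dx$. Hence $\int(\varphi-\psi)f\,dx=0$ for all $f\in L^1(G)$, so $\varphi=\psi$ almost everywhere, hence everywhere by continuity of both functions. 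With this repair --- replacing the false ``weak-$*$ implies pointwise'' by ``bounded pointwise convergence determines the weak-$*$ limit when tested against $L^1$'' --- your proof is complete and correct.
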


\subsection{Weak amenability}
\begin{defn}
A locally compact group $G$ is \textit{weakly amenable} if there is a net $(\varphi_i)$ of compactly supported functions such that 
\begin{enumerate}[(i)]
\item $\varphi_i \rightarrow 1$ uniformly on all compact subsets of $G$,
\item There is a constant $C>0$ such that $\|\varphi_i\|_{M_0A} \leq C$.
\end{enumerate}
The best upper bound $\Lambda_G$ is called the \textit{Cowling-Haagerup constant}.
\end{defn}

Locally compact amenable groups are trivially weakly amenable. To mention some more weakly amenable groups: the free group $F_2$, more generally Gromov's hyperbolic groups, groups acting properly on a finite dimentional CAT(0) cubical complexes, and rank one simple Lie groups. The main importance of weak amenability is that the Cowling-Haagerup constant is measure equivalence invariant and W*-equivalence invariant. We refer to \cite{haagerup1989,haagerup2016,dCH1985} for more on weak amenability.

In the following, we mention two results that we will use.
\begin{lem}[Cowling-Haagerup,\cite{haagerup1989}]\label{KKavg}
Suppose $K$ is a compact subgroup of $G$. Let $\varphi\in M_0A(G)$ (resp. $\varphi\in MA(G)$) and $\dot{\varphi}$ denotes the function obtained by averaging $\varphi$ over the double cosets $KxK, (x\in G)$, that is 
\begin{align*}
\dot{\varphi} (x) = \int_K\int_K\varphi(k_1x k_2)dk_1dk_2 \quad \text{for all}\quad  x\in G
\end{align*} where $dk_1$ and $dk_2$ are normalized Haar measures on $K$.  Then $\dot{\varphi}\in M_0A(G)$ and $\|\dot{\varphi}\|_{M_0A}\leq \|\varphi\|_{M_0A}$ (resp. $\|\dot{\varphi}\|_{MA}\leq \|\varphi\|_{MA}$). Furthermore, if $G$ admits a closed amenable subgroup $P$ such that $G=PK$, then 
\begin{align*}
\|\dot{\varphi}\|_{M_0A(G)}=\|\dot{\varphi}\|_{MA(G)}=\|\dot{\varphi}|_P\|_{MA(P)}.
\end{align*} 
\end{lem}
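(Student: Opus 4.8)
The plan is to treat the two assertions separately. For the first, I would begin by recording that both multiplier norms are translation-invariant: if $\psi(x)=\varphi(axb)$, then on $C^*_\lambda(G)$ the operator $m'_\psi$ is obtained from $m'_\varphi$ by conjugation with the unitary $\lambda(a)$ and by right multiplication with the unitary $\lambda(b)$, both of which are completely isometric, so $\|\psi\|_{MA}=\|\varphi\|_{MA}$ and $\|\psi\|_{M_0A}=\|\varphi\|_{M_0A}$. Since $\dot\varphi$ is precisely the average of the translates $x\mapsto\varphi(k_1xk_2)$ against the probability measure $dk_1\,dk_2$ on the compact group $K\times K$, the bound $\|\dot\varphi\|\le\|\varphi\|$ follows by a convexity (averaging) argument. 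For $M_0A$ this is cleanest to do constructively via condition (iv): starting from $\varphi(y^{-1}x)=\langle P(x),Q(y)\rangle$ with $\sup\|P\|\sup\|Q\|\le\|\varphi\|_{M_0A}$, the functions $\tilde P(x)=\int_K P(xk)\,dk$ and $\tilde Q(y)=\int_K Q(yk)\,dk$ satisfy $\dot\varphi(y^{-1}x)=\langle\tilde P(x),\tilde Q(y)\rangle$ with $\sup\|\tilde P\|\sup\|\tilde Q\|\le\sup\|P\|\sup\|Q\|$, giving $\|\dot\varphi\|_{M_0A}\le\|\varphi\|_{M_0A}$. For the $MA$ case I would instead test against $\psi\in A(G)$: the map $(k_1,k_2)\mapsto\varphi(k_1\cdot k_2)\psi$ is a bounded $A(G)$-valued function whose integral computes $\dot\varphi\psi$, so $\|\dot\varphi\psi\|_A\le\int_K\int_K\|\varphi(k_1\cdot k_2)\psi\|_A\,dk_1\,dk_2\le\|\varphi\|_{MA}\|\psi\|_A$.

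For the second assertion I would assemble the equality from three standard monotonicity facts together with one genuine inequality. The standard facts are: $\|\cdot\|_{MA(G)}\le\|\cdot\|_{M_0A(G)}$ always; restriction to the closed subgroup $P$ is contractive for both norms (for $M_0A$ by restricting the functions $P,Q$ in (iv) to $P$, and for $MA$ via Herz's restriction theorem that $A(G)|_P=A(P)$ is a quotient map); and, since $P$ is amenable, the Nebbia–Losert theorem quoted above gives $MA(P)=M_0A(P)$ isometrically. Together these give $\|\dot\varphi|_P\|_{MA(P)}\le\|\dot\varphi\|_{MA(G)}\le\|\dot\varphi\|_{M_0A(G)}$, so the whole statement reduces to the single reverse inequality $\|\dot\varphi\|_{M_0A(G)}\le\|\dot\varphi|_P\|_{M_0A(P)}$; once this is in hand, the three displayed quantities are squeezed together and must coincide.

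The heart of the matter, and the step I expect to be the main obstacle, is this reverse inequality, which I would prove by extending a Gilbert–Haagerup representation of $\dot\varphi|_P$ from $P$ to all of $G$. Choosing bounded continuous $P_0,Q_0:P\to\mathcal{H}$ with $\dot\varphi|_P(q^{-1}p)=\langle P_0(p),Q_0(q)\rangle$ and $\sup\|P_0\|\sup\|Q_0\|\le\|\dot\varphi|_P\|_{M_0A(P)}$, I would extend them by $\tilde P(pk)=P_0(p)$ and $\tilde Q(pk)=Q_0(p)$ for $p\in P,\ k\in K$; using that $\dot\varphi$ is $K$-bi-invariant and that $G=PK$, one checks for $x=p_xk_x$ and $y=p_yk_y$ that $\dot\varphi(y^{-1}x)=\dot\varphi(p_y^{-1}p_x)=\langle\tilde P(x),\tilde Q(y)\rangle$ while the supremum norms are unchanged, so condition (iv) yields the bound. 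The delicate points — and this is exactly where the hypotheses enter — are (i) well-definedness of $\tilde P,\tilde Q$, which needs $P_0,Q_0$ to be right $(P\cap K)$-invariant; I would arrange this by projecting $P_0$ orthogonally onto $\overline{\Span\,Q_0(P)}$ and then $Q_0$ onto $\overline{\Span\,P_0(P)}$, operations that preserve the pairing and do not increase the norms, while the bi-invariance of $\dot\varphi$ forces the required invariance of the projected functions; and (ii) continuity of $\tilde P,\tilde Q$, for which I would use that, $K$ being compact, the multiplication map $P\times K\to G$ is proper, hence induces a homeomorphism $(P\times K)/(P\cap K)\cong G$ along which the $(P\cap K)$-invariant functions $P_0,Q_0$ descend continuously. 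These two verifications, rather than any new idea, are what the argument really rests on.
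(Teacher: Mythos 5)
The paper does not prove this lemma; it is quoted verbatim from Cowling--Haagerup \cite{haagerup1989}, so there is no in-paper argument to compare against. Your proposal is correct and reconstructs essentially the standard proof from that reference: the averaging bound via translation-invariance of the multiplier norms (and the Gilbert-type representation $\varphi(y^{-1}x)=\langle P(x),Q(y)\rangle$ for the $M_0A$ case), and the key reverse inequality $\|\dot{\varphi}\|_{M_0A(G)}\leq \|\dot{\varphi}|_P\|_{M_0A(P)}$ by extending a representation of $\dot{\varphi}|_P$ along $G=PK$; your projection trick for forcing right $(P\cap K)$-invariance does work (one may alternatively just average $P_0$ and $Q_0$ over $P\cap K$, using the $K$-bi-invariance of $\dot{\varphi}$ to preserve the pairing), and the properness of $P\times K\to G$ settles continuity as you say.
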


\begin{thm}[Lafforgue-de la Salle, \cite{lafforgue2011CBAP}]\label{rigidity ineq thm}
Let $G=SL(3,\R)$, and $K=SO(3,\R)$. There is a constant $C> 0$ such that for any $K$-bi-invariant function $\varphi\in C_0 (G)$ and any $t> 0$, we have
\begin{align}\label{rigidity ineq}
C e^{t/3}  \left| x{\varphi} \begin{pmatrix}
e^t & 0&0\\
0 &1&0\\
0 &0& e^{-t}
\end{pmatrix}\right| \leq \|\varphi\|_{MA}.
\end{align}
\end{thm}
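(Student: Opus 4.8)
The plan is to convert the finiteness of $\|\varphi\|_{MA}$ into a rigid geometric constraint and then read off the decay of $\varphi(a_t)$ from the root data of $SL(3,\R)$ in the regular direction. We may assume $\|\varphi\|_{MA}<\infty$, otherwise there is nothing to prove. First I would pass to the completely bounded picture: since $\varphi$ is $K$-bi-invariant, Lemma \ref{KKavg} applied with the Iwasawa decomposition $G=(AN)K$, where $AN$ is amenable, gives $\|\varphi\|_{MA}=\|\varphi\|_{M_0A}$. This lets me invoke the Schur-multiplier description (condition (iv)) of the de Canniere--Haagerup theorem: there exist bounded maps $P,Q\colon G\to\mathcal H$ with $\varphi(y^{-1}x)=\langle P(x),Q(y)\rangle$ for all $x,y$ and $\sup_{x,y}\|P(x)\|\,\|Q(y)\|\le\|\varphi\|_{M_0A}$.

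Next I would extract the decay. Evaluating the Gram relation at pairs with $y^{-1}x=a_t$, namely $x=ka_t$ and $y=k$ for $k\in K$, gives $\varphi(a_t)=\langle P(ka_t),Q(k)\rangle$ for every $k$, so $\varphi(a_t)=\int_K\langle P(ka_t),Q(k)\rangle\,dk$. Averaging $P$ and $Q$ over the compact group $K$ does not increase $\sup\|P\|\,\|Q\|$, so I may assume $P,Q$ are $K$-equivariant; the task then becomes to estimate the correlation of two bounded $K$-equivariant $\mathcal H$-valued fields evaluated on the shell $Ka_t$ and at the origin. In the regular direction both simple roots grow, so conjugation by $a_t$ disperses any fixed $K$-type across increasingly many $K$-types at a rate set by the root data; decomposing the integral into $K$-isotypic components and using the gap between the trivial and the first nontrivial $K$-type of $SO(3,\R)$ should bound the correlation, hence $|\varphi(a_t)|$, by $C\,e^{-t/3}\|\varphi\|_{M_0A}$, which is the claimed inequality.

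The reduction to $K$-equivariant Gram data is routine; the crux is the sharp decorrelation estimate yielding the precise exponent $1/3$. Quantifying how fast bounded $K$-equivariant fields must decorrelate along the regular flat requires the fine harmonic analysis of $SL(3,\R)$---the decay of matrix coefficients across $K$-types and the rank-two structure of the Cartan projection---which is the technical heart of Lafforgue--de la Salle; without it one recovers only the trivial bound $|\varphi(a_t)|\le\|\varphi\|_{MA}$. An equivalent, more computational route would instead test the operator $m_\varphi'$ on $C^*_\lambda(G)$ against matrix coefficients of the spherical principal series $\pi_s$, where the same root data enter through the decay of the spherical functions $\phi_s(a_t)$ and the volume growth $e^{2\rho(\log a_t)}$ of the double coset $Ka_tK$. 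Either way, this higher-rank estimate is the essential and hardest ingredient, and it is exactly what distinguishes $SL(3,\R)$ from the rank-one case.
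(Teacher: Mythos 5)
This statement is not proved in the paper at all: it is imported verbatim from Lafforgue--de la Salle \cite{lafforgue2011CBAP}, so there is no internal proof to compare against. Judged on its own terms, your proposal has a genuine gap: everything up to and including the Gram representation is a correct but standard reduction, and the actual content of the theorem --- the quantitative bound $|\varphi(a_t)|\le C^{-1}e^{-t/3}\|\varphi\|_{MA}$ --- is precisely the step you leave unproved. The reduction itself is fine: for $K$-bi-invariant $\varphi$ the equality $\|\varphi\|_{MA}=\|\varphi\|_{M_0A}$ does follow from Lemma \ref{KKavg} with $P=AN$ from the Iwasawa decomposition, and replacing $P,Q$ by $\widetilde P(x)\in L^2(K,\mathcal H)$ with $\widetilde P(x)(k)=P(kx)$ makes the Gram data $K$-equivariant without increasing $\sup\|P\|\,\|Q\|$. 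But the sentence claiming that ``the gap between the trivial and the first nontrivial $K$-type of $SO(3,\R)$'' should yield the bound $Ce^{-t/3}$ is not an argument: a spectral gap for the fixed compact group $K$ cannot by itself produce any decay in $t$, since nothing a priori prevents the vectors $P(ka_t)$ from concentrating on the trivial $K$-type for all $t$, in which case the ``correlation'' stays of order $1$.

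The actual mechanism in \cite{lafforgue2011CBAP} is genuinely different and is where all the work lies: one restricts the Schur multiplier to carefully chosen pairs of $K$-orbits, identifies the resulting kernels with $U$-invariant kernels on $S^2\times S^2$ for various closed subgroups $U\subset SO(3)$, proves a H\"older-type regularity estimate for such kernels that is uniform in the completely bounded norm, and then iterates this estimate along a path in the two-dimensional Weyl chamber; the exponent $1/3$ comes out of optimizing that iteration, not out of a $K$-type gap. Your alternative suggestion (testing $m_\varphi'$ against spherical principal series coefficients) also cannot work as stated, because the multiplier norm controls all of $C^*_\lambda(G)$ only through the tempered dual, and the needed uniformity is exactly what the rank-two analysis supplies. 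In short, the proposal is a correct framing plus an explicit acknowledgement that the core estimate is missing; as a proof it does not go through, and the appropriate move --- which is what the paper itself does --- is to cite the result.
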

The same result was proved for $Sp(2,\R)$ in \cite{deLaat2012}. Combined together, these two inequalities show that the higher rank simple Lie groups with finite center are not weakly amenable.

\subsection{Rapid decay property (RD)}\label{rapid decay}
First observed by Uffe Haagerup in \cite{haagerupanexample} for free groups, and later developed by Paul Jolissaint in \cite{jolissaint1990rapidly}, the Rapid decay property took significant attention of mathematicians for being a part of tools to establish important examples of groups giving a positive answer to Baum-Connes conjecture. The class of groups with Rapid decay property is quite large and includes Gromov's hyperbolic groups, co-compact cubical CAT(0) groups, Coxeter groups, Mapping class groups, Braid groups, large type Artin groups, 3-manifold groups not containing $Sol$, Wise non-Hopfian group, some small cancellation groups, and uniform lattices of $SL(3,\mathbb{R})$. We invite the readers to \cite{RDintro} for a comprehensive survey.

Recall that a \textit{length function} $\ell: G\rightarrow \R_+$ on a locally compact group $G$ is a measurable function such that $\ell(x) = \ell(x^{-1})$ and  $\ell(xy)\leq \ell (x)+\ell (y)$ for all $x,y\in G$, and $\ell(e) =0$ where $e\in G$ is the neutral element. The ball of radius $n\geq 0$ with respect to $\ell$ is the set $B_n = \{x\in G: \ell (x)\leq n\}$. We say a length function $\ell_1$ dominates another length function $\ell_2$ if there is a constant $c>0$ such that $\ell_2(x)\leq c\ell_1(x) +c$ for all $x\in G$. If two length functions dominate each other, we say they are equivalent. Let us provide an interesting example of equivalent length function. Consider the group $G=SL(3,\R)$. Take any compact generating set $C\subseteq G$. The word length function $\ell_C$ associated to $C$ is defined by $
\ell_C (x) = \min \{n\in\Z_+:x\in C^n\}$, and it dominates any other length functions on $G$. Another length function on $G$ can be defined as $L(x) = \log \|x\|+\log\|x^{-1}\|$ where $\|x\|$ is the operator norm of $x$ acting on $\ell^2(\{1,2,3\})$. The latter length function  comes from the Cartan decomposition. It turns out these two norms are equivalent. 
Let $\Gamma$ be any uniform lattice in $G$. Since $\Gamma$ satisfies the Kazhdan's property (T), there is a finite generating set $S\subseteq \Gamma$. The length functions $\ell_S$, $\ell_C|_S$, and $L|_S$ on $\Gamma$ are all equivalent on $\Gamma$ (see \cite{LMR93,LMR00}).

\begin{defn}
A locally compact group $G$ has \textit{rapid decay property} (RD in short) with respect to a length function $\ell$ if there exist non-negative constants $C$ and $a$ such that for all $n\geq 0$ and $f\in C_c(G)$ with $\supp (f) \subseteq B_n$ we have
\begin{align*}
\|\lambda(f)\| \leq C(1+n)^a \|f\|_2.
\end{align*}
\end{defn}
We highlight one of Lafforgue's results, which was the last step to prove that uniform lattices in $SL(3,\R)$ satisfy the Baum-Connes' conjecture. 
\begin{thm}[Lafforgue, \cite{LafforgueRD}]\label{Vincent1}
Uniform lattices in $SL(3,\R)$ have RD.
\end{thm}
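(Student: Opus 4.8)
The plan is to prove the Haagerup-type inequality equivalent to RD, reducing the operator-norm estimate on $C^*_\lambda(\Gamma)$ to a geometric counting problem on the symmetric space of $G=SL(3,\R)$, and then to control the resulting radial part through the decay of the Harish-Chandra spherical function.

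First I would fix on $\Gamma$ the length $\ell = L|_\Gamma$ coming from the Cartan decomposition, which by the equivalences recorded above is comparable to the word length, so that RD with respect to one is RD with respect to the other. I would then replace the definition of RD by its standard convolution (Haagerup) reformulation: it suffices to find constants $C,a$ so that for every $r$ and every $f,g\in\C[\Gamma]$ with $\supp f \subseteq S_r := \{x : r-1 < \ell(x)\le r\}$ one has $\|f*g\|_2 \le C(1+r)^a\|f\|_2\|g\|_2$. Expanding $\langle f*g, h\rangle$ as a sum over the relation $xy=z$ turns the estimate into bounding a trilinear form by $\|f\|_2\|g\|_2\|h\|_2$, whose kernel counts, for fixed $z$, the pairs $(x,y)$ with $xy=z$ and prescribed lengths.

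Second, I would transport this counting to the geometry of $G$. Since $\Gamma$ is uniform, fix a relatively compact fundamental domain and a base point $o$ in the symmetric space $X=G/K$; then $\ell(\gamma)$ is comparable to $d(o,\gamma o)$, and the triples contributing to the trilinear kernel correspond to geodesic triangles in $X$ whose side lengths are prescribed as vectors in the positive Weyl chamber $\mathfrak a^+$ (via the Cartan projection). The feature special to rank two is that one must control not merely scalar distances but the full $\mathfrak a^+$-valued Cartan projection; the polynomial bound on the number of lattice triangles with a given vector-valued shape is the combinatorial heart of the argument and, passing to the $\tilde A_2$-building attached to $SL(3,\R)$, reduces to bounding the fibers of the folding and retraction maps, which grow polynomially in $r$.

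Third, I would assemble the estimate by splitting a general $f$ into its $K$-isotypic pieces and reducing the norm bound to the $K$-bi-invariant (radial) case, where the ambient fact $\|\lambda_G(F)\| = \int_G \phi_0 F$ for non-negative $F$ applies. Here the explicit decay $\phi_0(\exp H)\asymp p(H)\,e^{-\rho(H)}$ of the spherical function on $SL(3,\R)$, matched against the exponential volume growth of the balls $B_r$, yields exactly the polynomial loss $(1+r)^a$ and no exponential loss. I expect the main obstacle to be precisely the passage from this radial estimate to arbitrary (non-radial) functions: the spherical function alone controls only the bi-$K$-invariant shadow, and obtaining a bound uniform over all $K$-types --- equivalently, the full trilinear inequality rather than its radial projection --- is what forces the delicate geometric counting on the building and constitutes the technical core of Lafforgue's proof. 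The lower bound of Theorem \ref{rigidity ineq thm} plays no role here, since RD is an upper estimate on the reduced $C^*$-norm.
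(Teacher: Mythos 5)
The paper does not actually prove this statement: Theorem \ref{Vincent1} is quoted as a black box from Lafforgue \cite{LafforgueRD} and then used in the proof of Theorem A. So your proposal has to be measured against Lafforgue's published argument. Your steps 1 and 2 are in the right spirit --- reduction to a Haagerup-type trilinear inequality, and counting triangles in the symmetric space with prescribed vector-valued (Cartan) side lengths is indeed the core of the real proof (and of the Ramagge--Robertson--Steger building argument in the $p$-adic case) --- but you leave exactly that part as an unproved ``combinatorial heart,'' and the one step you do flesh out, step 3, cannot work.

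The identity $\|\lambda_G(F)\|=\int_G \phi_0 F$ for non-negative $F$ (Herz majorization) is a statement about the ambient Lie group $G$, not about the lattice $\Gamma$. A function on the discrete group $\Gamma$ admits no $K$-isotypic decomposition ($\Gamma$ meets $K$ in a finite set), and the spherical majorization is simply false for discrete subgroups: taking $f=\delta_\gamma$ gives $\|\lambda_\Gamma(\delta_\gamma)\|=1$, while the would-be majorant $\phi_0(\gamma)$ tends to $0$ as $\ell(\gamma)\to\infty$. There is also a structural reason this route is doomed: the radial estimate on $G$ via $\phi_0$ holds for \emph{every} semisimple Lie group with finite center, so if RD for a uniform lattice could be deduced from it by an averaging or isotypic reduction, every uniform lattice in every higher rank group would have RD --- that is, Valette's conjecture, which the paper itself recalls is open, would be a triviality. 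Lafforgue's theorem covers precisely $SL(3,\R)$ (and $SL(3,\C)$) because the polynomial bound comes entirely from the geometric counting: one proves retraction/approximation properties of triangles in $X=SL(3,\R)/SO(3)$ that are special to $SL(3)$, and derives the Haagerup inequality for the cocompact $\Gamma$-action by Cauchy--Schwarz and counting, with no spherical function anywhere. Since your proposal defers exactly this part and replaces the quantitative conclusion by an argument unavailable for $\Gamma$, it is not a proof.
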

This theorem is the main ingredient to prove Theorem A.
\subsection{Property $(T_{Schur},G,K)$}\label{PropertTSchur}
Property $(T_{Schur}, G, K)$ was introduced in \cite{Lia16} as an analogue of property $(T_{Schur})$. Suppose that $G$ is a reductive group over a local field, $K$ is its maximal compact subgroup, and $\Gamma$ is a lattice of $G$ satisfying property $(T_{Schur},G,K)$. Then, all known methods to prove the Baum-Connes conjecture fail for $\Gamma$. 

\begin{defn}\label{defn propert T schur} Let $G$ be a locally compact group, $K$ a compact subgroup, $H$ a closed subgroup of $G$, and $\ell$ a proper length function of $G$. For $n\in \N$ and $f\in C(G)$, define the quantity
\begin{align*}
\|f\|_{MA(H,G,\ell,n)} = \sup \left\{ \|\lambda_H(f|_H \varphi)\| : \varphi\in C_c(H),\supp(\varphi)\subseteq B_n, \|\lambda_H(\varphi)\|\leq 1 \right\}.
\end{align*}
When $G$ and $\ell$ are already fixed, we also write $\|f\|_{MA(H,G,\ell,n)} =\|f\|_{MA(H,n)}$. We say that $H$ has \textit{property} $(T_{Schur},G,K,\ell)$ if there exist a positive constant $s > 0$ and a function $\phi\in C_0(G)$ vanishing at infinity such that for any $D > 0$ and $K$-bi-invariant
function $\varphi\in C(G)$ with the following  condition 
\begin{align*}
\|\varphi\|_{MA(H,n)}\leq De^{sn}\quad \text{for all}\quad n\in\N,
\end{align*}
there exists a limit $\varphi_\infty\in\C$ to which $\varphi$ tends uniformly rapidly
\begin{align*}
|\varphi(x)-\varphi_\infty|\leq D\phi (x)\quad \text{for all} \quad x\in G.
\end{align*}
\end{defn}

The only known example given in \cite{Lia16} is as follows. Let $\mathbb{F}_q$ be a finite field of characteristic different from 2 with cardinality $q$.  Let $G$ be the symplectic group $Sp_4(\mathbb{F}_q((\pi)))$ over the local field $\mathbb{F}_q ((\pi))$ and $K =Sp_4(\mathbb{F}_q [[\pi]])$ the maximal compact subgroup of $G$. Let $\Gamma$ be the non-uniform lattice $Sp_4 (\mathbb{F}_q[\pi^{-1}])$ in $G$. Let $H<\Gamma$ be the subgroup consisting of the elements of the form 
\begin{align*}
\begin{pmatrix}
1&*&*&*\\&1&*&*\\&&1&*\\&&&1
\end{pmatrix}\in \Gamma.
\end{align*} 
For $i,j\in \N_0$, denote $D(i,j) = diag(
\pi^{-i}, \pi^{-j},\pi^j,\pi^i)$. By Cartan's decomposition theorem, every element $x\in G$ can be written as $x=kD(i,j)k'$ for some $k,k'\in K$ and a unique $(i,j)\in\N_0^2$ with $i\geq j$. Moreover the length function $\ell: kD(i,j)k'\mapsto i+j$ is equivalent to the word length function of $G$, and even its restriction to the lattice $\Gamma$ is equivalent to the word length function of $\Gamma$. Then $H$ and $\Gamma$ have property $(T_{Schur},G,K,\ell)$.

It seems that Kazhdan's property (T) has less to do with property $(T_{Schur},G,K,\ell)$. For example, compact groups obviously satisfy property (T), while it is not the case for property $(T_{Schur},G,K,\ell)$ as the following lemma shows.
\begin{lem}
Let $G$ be an unbounded locally compact group endowed with a proper length function $\ell$. Suppose that $K$ and $H$ are compact subgroups of $G$. Then $H$ does not have property $(T_{Schur}, G, K, \ell)$.
\end{lem}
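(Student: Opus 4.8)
The plan is to exploit the fact that the quantity $\|\varphi\|_{MA(H,n)}$ only sees the restriction $\varphi|_H$. Since
$\|\varphi\|_{MA(H,n)} = \sup\{\|\lambda_H(\varphi|_H\,\psi)\| : \psi \in C_c(H),\ \supp\psi\subseteq B_n,\ \|\lambda_H(\psi)\|\le 1\}$,
any $K$-bi-invariant $\varphi$ with $\varphi|_H\equiv 0$ automatically satisfies $\|\varphi\|_{MA(H,n)}=0$ for every $n$, and hence meets the growth hypothesis $\|\varphi\|_{MA(H,n)}\le De^{sn}$ for every $s>0$ (with $D=1$). Negating the definition, it therefore suffices to produce, for each prescribed $s>0$ and each $\phi\in C_0(G)$, a single continuous $K$-bi-invariant $\varphi$ that vanishes on $H$ but for which no constant $\varphi_\infty\in\C$ can satisfy $|\varphi(x)-\varphi_\infty|\le \phi(x)$ for all $x$. (If $\phi$ takes a negative value the desired bound already fails, so we may assume $\phi\ge 0$.)

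First I would construct such a $\varphi$ through the double coset space. Since $K$ is compact, the quotient map $q\colon G\to X:=K\backslash G/K$ is continuous and proper, and $X$ is a locally compact Hausdorff space, non-compact because $G$ is unbounded (hence non-compact). The image $q(KHK)$ and the image $q(\{\phi\ge 1/2\})$ of the compact set $\{\phi\ge 1/2\}$ are compact in $X$; let $A$ be their union. Choosing a point $p_1\in G$ outside the compact set $KHK\cup K\{\phi\ge 1/2\}K=q^{-1}(A)$ guarantees $b:=q(p_1)\notin A$. As locally compact Hausdorff spaces are completely regular, Urysohn's lemma yields $\bar\varphi\in C_c(X)$ with $0\le\bar\varphi\le 1$, $\bar\varphi|_A\equiv 0$ and $\bar\varphi(b)=1$. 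Setting $\varphi:=\bar\varphi\circ q$ then produces a continuous, $K$-bi-invariant function, compactly supported on $q^{-1}(\supp\bar\varphi)$ (compact, by properness of $q$), with $\varphi|_{KHK}\equiv 0$ (so in particular $\varphi|_H\equiv 0$) and $\varphi(p_1)=1$.

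Finally I would derive the contradiction. By construction $p_1\notin\{\phi\ge 1/2\}$, so $\phi(p_1)<1/2$; and since $\varphi$ has compact support while $G$ is non-compact, I can pick $p_0$ lying outside both $\supp\varphi$ and $\{\phi\ge 1/2\}$, so that $\varphi(p_0)=0$ and $\phi(p_0)<1/2$. If some $\varphi_\infty$ satisfied $|\varphi(x)-\varphi_\infty|\le\phi(x)$ for all $x$, then $|\varphi_\infty|\le\phi(p_0)<1/2$ and $|1-\varphi_\infty|\le\phi(p_1)<1/2$, whence $1\le|\varphi_\infty|+|1-\varphi_\infty|<1$, which is absurd. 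Hence no admissible $\varphi_\infty$ exists, and $H$ fails property $(T_{Schur},G,K,\ell)$.

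I expect the main subtlety to be the bookkeeping around $K$-bi-invariance: one must build the test function directly on the double coset space rather than, say, averaging a bump over $K$ (which would disturb its values on $H$), so that vanishing on $H$ and non-triviality far away can coexist; and one must record that the defining quantity depends only on $\varphi|_H$, which is what trivializes the growth condition. The remaining point to pin down is that \emph{unbounded} forces $G$, and therefore $X$, to be non-compact, so that points outside any prescribed compact set are available.
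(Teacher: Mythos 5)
Your proof is correct, but it takes a genuinely different route from the paper's. Both arguments share the same starting observation: a $K$-bi-invariant $\varphi$ with $\varphi|_H\equiv 0$ has $\|\varphi\|_{MA(H,n)}=0$, so it satisfies the growth hypothesis vacuously, and one only needs to violate the conclusion. Where you diverge is in the construction of the test function and in how the contradiction is extracted. The paper builds a whole sequence $\psi_m=f_{4m}-f_m$ of functions in $A(G)$ via Eymard's trick, then averages over $K\times K$; to guarantee that the averaged functions still vanish on $H$ it must first replace $\ell$ by an equivalent $K$-bi-invariant length function (Jolissaint's $L_3+L_4$ construction), and it concludes by a double limit $\lim_{x\to\infty}\sup_m$ pitting the bumps escaping to infinity against $\phi\in C_0(G)$. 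You instead build a single function directly on the double coset space $K\backslash G/K$ by Urysohn's lemma, which makes $K$-bi-invariance and vanishing on $KHK\supseteq H$ automatic and removes any need to modify $\ell$ or to average; and you close with a two-point triangle-inequality estimate ($|\varphi_\infty|<1/2$ and $|1-\varphi_\infty|<1/2$) rather than a limit at infinity. Your version is more elementary and arguably cleaner — it uses only the compactness of $KHK$ and of $\{\phi\ge 1/2\}$, the properness and openness of the quotient map onto $K\backslash G/K$ (so that the double coset space is locally compact Hausdorff and non-compact), and complete regularity — whereas the paper's version keeps its test functions inside $A(G)$, which is not needed for the statement. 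The one point worth stating explicitly in your write-up is the standard fact that the quotient of a locally compact Hausdorff group by the action of the compact group $K\times K$ is locally compact Hausdorff with proper, open quotient map; everything else is airtight.
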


\begin{proof}
Assume, by contradiction, that $H$ has property $(T_{Schur}, G, K, \ell)$ and let $s>0$ and $\phi\in C_0(G)$ be as in Definition \ref{defn propert T schur}.
Define
\begin{align*}
L_1(x) &= \left\lbrace\begin{array}{ll}
0, &\text{if } \ell(x) = 0\\
\lfloor \ell(x)\rfloor +1, &\text{otherwise}
\end{array}\right.\\
L_2(x) &=\int_K L_1(kxk^{-1})dk\\
L_3 (x) &=\min \{L_2 (k_1xk_2):k_1,k_2\in K\}\\
L_4 (x) &= \left\lbrace\begin{array}{ll}
0, & \text{if } x\in K\\
1, & \text{otherwise.}
\end{array}\right.
\end{align*} for all $x\in G$. This construction is from \cite{jolissaint1990rapidly}. Note that $L_3+L_4$ is a length function equivalent to $\ell$. If necessary, replacing $\ell$ by $L_3+L_4$, we can assume that $\ell$ takes 0 on $K$ so that the balls are $K$-bi-invariant. Choose a large enough $r\in\N$ such that $B_r$ has a non-empty interior and contains $H$. For each $m\geq r$, construct a nonnegative function $f_m\in A(G)$ such that $f_m$ takes 1 on $B_m$ and 0 on $B_{2m}^0$ using Eymard's trick. Then $\psi_m = f_{4m} - f_m$ is a non-negative compactly supported  function in $A(G)$ such that $\psi_m |_H = 0$ and $\psi_m (x) \neq 0$ for some $x\in G$ with $\ell (x) \geq m$. We use $K\times K$ double averaging and normalization on $\psi_m$ in order to have a $K$-bi-invariant function ${\varphi}_m\in C_c(G)$ such that $\varphi_m|_H = 0$ and ${\varphi}_m (x) = 1$ for some $x\in G$ with $\ell (x) \geq m$. Now, we have
$\|\varphi_m\|_{MA(H,n)} = 0 \leq e^{sn}$ for all $n\in \N$ and $m\geq r$. It follows that $| \varphi_m(x)| \leq \phi(x)$ for all $x\in G$  and $m\geq r$. Taking $\lim_{x\rightarrow \infty} \sup_{m\geq r}$ on left and right hand side of the inequality, we get a desired contradiction.
\end{proof}

In \cite[Proposition 2.3]{Lia16}, it was proved that if a discrete subgroup $\Gamma$ of $G$ has property $(T_{Schur},G,K,\ell)$, then $(\Gamma,\ell)$ does not have RD. As its analogue, we have Theorem D.

\section{Groups with tame cuts}\label{PV}
In this section, we will give the first examples of groups with tame cuts by investigating its connection to weak amenability, multiplier approximation property, and RD. If there exists a proper length function $\ell$ for which $(G,\ell)$ has (completely bounded) [characteristic] tame cuts, we simply say $G$ has (completely bounded) [characteristic] tame cuts. It is also worth noting that when $G$ is amenable, $G$ has (characteristic) tame cuts if and only if $G$ has completely bounded (characteristic) tame cuts since in that case, the Banach algebras $M_0A(G)$ and $MA(G)$ coincide. Here are the first examples.
\begin{prop}\label{example Z}
The infinite cyclic group $\Gamma = \Z$ has characteristic tame cuts with respect to the logarithmic length function $\log (1+|\cdot |)$ but does not with respect to the double logarithmic length function $\log (1+\log (1+|\cdot |))$.
\end{prop}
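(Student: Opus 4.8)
The plan is to reduce everything to $L^1(\T)$-norms of exponential sums. Since $\Z$ is amenable, $MA(\Z)=M_0A(\Z)$, and under the Fourier/Gelfand identification $C^*_\lambda(\Z)\cong C(\T)$ recalled in the introduction, the multiplier associated with a finitely supported characteristic function $\n1_S$ (for $S\subseteq\Z$ finite) is convolution by the exponential sum $P_S(t)=\sum_{k\in S}e^{2\pi i kt}$, so that $\|\n1_S\|_{MA}=\|P_S\|_{L^1(\T)}$ — exactly as the introduction computes $\|\n1_{[-n,n]\cap\Z}\|_{MA}=\|D_n\|_1$ for the Dirichlet kernel $D_n=P_{[-n,n]\cap\Z}$. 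First I would record the two balls: for $\ell_1=\log(1+|\cdot|)$ one has $B_n=\{k\in\Z:|k|\le e^n-1\}$, an interval of radius $\sim e^n$, whereas for $\ell_2=\log(1+\log(1+|\cdot|))$ one has $B_n=\{k\in\Z:|k|\le e^{e^n-1}-1\}$, of radius $\sim e^{e^n}$; both $\ell_1,\ell_2$ are readily seen to be proper length functions, using the subadditivity of $x\mapsto\log(1+x)$.

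For the logarithmic length function I would simply take the characteristic functions $\varphi_n=\n1_{B_n}$ of the balls themselves. These are compactly supported, satisfy $\varphi_n|_{B_n}\equiv 1$, and have symbol $P_{B_n}=D_{M_n}$ with $M_n=\lfloor e^n-1\rfloor$. Hence $\|\varphi_n\|_{MA}=\|D_{M_n}\|_1=\frac{4}{\pi}\log M_n+O(1)=\frac{4}{\pi}\,n+O(1)\le Cn$, so $(\varphi_n)$ are characteristic tame cuts with exponent $a=1$. This direction is routine once the Dirichlet-kernel asymptotics $\|D_N\|_1=\frac{4}{\pi}\log N+O(1)$ quoted in the introduction are in hand.

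For the double logarithmic length function I would argue by contradiction: suppose $\varphi_n=\n1_{S_n}$ are characteristic tame cuts, so $S_n\supseteq B_n$ and $\|\varphi_n\|_{MA}\le Cn^a$. The temptation to compare $\|\n1_{S_n}\|_{MA}$ with $\|\n1_{B_n}\|_{MA}$ by monotonicity fails, since $S\mapsto\|\n1_S\|_{MA}=\|P_S\|_1$ is \emph{not} monotone under inclusion; this is precisely where the main obstacle lies, as $S_n$ may be any finite set containing $B_n$, not just an interval. The decisive input is the sharp lower bound from the resolution of Littlewood's conjecture (McGehee--Pigno--Smith, the Hardy-inequality paper \cite{1981hardyineq}, and independently Konyagin): there is an absolute constant $c>0$ such that $\|P_S\|_{L^1(\T)}\ge c\log|S|$ for every finite $S\subseteq\Z$, the coefficients of $P_S$ all being equal to $1$. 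Applying this to $S_n$ and using $|S_n|\ge|B_n|$ with $\log|B_n|\sim e^n$ yields $\|\varphi_n\|_{MA}=\|P_{S_n}\|_1\ge c\log|S_n|\ge c\log|B_n|\sim c\,e^n$, which eventually exceeds $Cn^a$ for every fixed $a$ — the desired contradiction. Thus $\Z$ admits no characteristic tame cuts for $\ell_2$, and the key difficulty is exactly the Littlewood-type lower bound valid for arbitrary finite frequency sets rather than only for intervals.
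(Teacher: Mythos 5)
Your proposal is correct and follows essentially the same route as the paper: both directions reduce to $L^1(\T)$-norms of exponential sums via amenability of $\Z$, with the Dirichlet-kernel asymptotics $\|D_N\|_1=\frac{4}{\pi}\log N+O(1)$ giving the tame cuts for the logarithmic length (the paper uses intervals of radius $3^n$ where you use the balls themselves, an immaterial difference), and the McGehee--Pigno--Smith lower bound $\|\sum_{k\in S}e^{2\pi ikt}\|_1\geq c\log|S|$ from \cite{1981hardyineq} ruling them out for the double logarithm. Your explicit remark that $S\mapsto\|\n1_S\|_{MA}$ is not monotone under inclusion, which is why the Littlewood-type bound for arbitrary finite sets is needed, is a point the paper leaves implicit.
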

\begin{proof}
We use the Fourier transform and bounds for $L^1(\T)$-norm of trigonometric polynomials.
\begin{enumerate}[(i)]
\item For $n\in \N$, we have
\begin{align}\label{hardyeq}
\begin{split}
\|\n1_{\Z \cap [-n,n]}\|_{M_0A} &= \|\n1_{\Z \cap [-n,n]}\|_A = \|\mathcal{F}(\n1_{\Z \cap [-n,n]})\|_{L^1(\T)} \\&= \left\|\sum_{k=-n}^n e^{i2\pi kt}\right\|_{L^1(\T)} = \dfrac{4}{\pi} \log (n) + O(1).
\end{split}
\end{align} (See \cite[p. 71, Exercise 1.1]{MR2039503}). This shows that the sequence $(\n1_{3^n})_{n\in\N}$ forms characteristic tame cuts for $(\Z,\log(1+|\cdot|))$.
\item  There is a constant $C>0$ such that for any finite subset $B\subseteq \Z$ we have
\begin{align}\label{hardyineqrigid}
\|\n1_B\|_{M_0A} = \left\|\sum_{k\in B} e^{i2\pi kt}\right\|_{L^1 (\T)} \geq C\log |B|.
\end{align} (See \cite{1981hardyineq}). If $\varphi_n$ is a characteristic function on $\Z$ such that its support is finite and contains the ball $B_n$ of radius $n$, then the set $\supp (\varphi_n)$ would contain at least $e^{e^n} \approx |B_n|$ elements. The inequality $(\ref{hardyineqrigid})$ shows that $\|\varphi_n\|_{M_0A}\geq C e^n$. Thus, there is no characteristic tame cuts for $(\Z, \log (1+\log (1+|\cdot |)))$.
\end{enumerate}
\end{proof}
\begin{rmk}
The sequence $(\n1_{\Z \cap [-n,n]})$ gives characteristic tame cuts for $(\Z, \log (1+|\cdot |))$, and the multiplier norms grow linearly. The inequality $(\ref{hardyineqrigid})$ shows that this linear rate is actually the best.
\end{rmk}
\begin{prop}\label{example weak amenability}
Let $G$ be a locally compact group, and $\ell$ be a proper length function on it. Then $(\Gamma,\ell)$ has completely bounded tame cuts with $a=0$ (cf. Definition \ref{def1}) if and only if $G$ is weakly amenable.
\end{prop}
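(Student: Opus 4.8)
Recall that weak amenability asks for a net $(\varphi_i)$ of compactly supported functions with $\varphi_i \to 1$ uniformly on compact sets and $\|\varphi_i\|_{M_0A} \leq C$ uniformly, while completely bounded tame cuts with $a=0$ asks for a sequence $(\varphi_n)$ of compactly supported continuous functions with $\varphi_n|_{B_n} \equiv 1$ and $\|\varphi_n\|_{M_0A} \leq C$ (the bound $Cn^a$ collapses to the constant $C$ when $a=0$). So both notions produce a uniformly $M_0A$-bounded family of compactly supported functions converging to $1$; the content of the proposition is that the two convergence modes are interchangeable in the presence of a uniform $M_0A$ bound and a \emph{proper} length function.

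\textbf{For the forward direction (tame cuts $\Rightarrow$ weak amenability),} I would take the sequence $(\varphi_n)$ of completely bounded tame cuts and argue it already serves as a witnessing net for weak amenability. The uniform bound $\|\varphi_n\|_{M_0A} \leq C$ is immediate. For the convergence $\varphi_n \to 1$ uniformly on compact sets, I would use that $\ell$ is proper: any compact set $S \subseteq G$ is contained in some ball $B_N$ (since properness forces $\ell$ to be bounded on $S$), and then for all $n \geq N$ we have $S \subseteq B_n$, whence $\varphi_n \equiv 1$ on $S$. Thus convergence is not merely uniform on compacta but eventually exact, giving weak amenability.

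\textbf{For the reverse direction (weak amenability $\Rightarrow$ tame cuts),} this is where the main work lies. Given a net $(\varphi_i)$ realizing weak amenability with bound $C$, I must extract, for each $n$, a single compactly supported continuous function that is \emph{identically} $1$ on the ball $B_n$ while keeping the $M_0A$-norm uniformly bounded. Uniform convergence on the compact set $\overline{B_n}$ only yields $|\varphi_i - 1| < \epsilon$ there, not exact equality, so I would combine two tools: first, use the net to produce a compactly supported $\psi_n \in M_0A(G)$ that is close to $1$ on $B_n$ with norm $\leq C + 1$; then correct it to be exactly $1$ on $B_n$. The natural correction is a \textbf{multiplicative or cut-off adjustment}: choose an auxiliary compactly supported bump $\eta_n \in A(G)$ equal to $1$ on $B_n$ (available by Eymard's regularity of $A(G)$, as invoked in the preceding lemma via "Eymard's trick"), and form a function agreeing with $1$ on $B_n$ by a construction such as $\psi_n + (1 - \psi_n)\eta_n$ or by exploiting that on $B_n$ the approximant is invertible-close to $1$. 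The key point to verify is that this correction adds only a bounded amount to the $M_0A$-norm, which should follow because $A(G) \subseteq M_0A(G)$ and the correcting term has controlled $A$-norm.

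\textbf{The hard part will be controlling the $M_0A$-norm of the corrected function} while guaranteeing exact equality to $1$ on $B_n$; the cleanest route may be to note that $1 - \varphi_i$ is small in sup-norm on $\overline{B_n}$ and small enough that multiplying by a fixed bump and subtracting keeps the completely bounded norm within a universal constant of $C$. I would also keep track that the resulting functions are genuinely compactly supported and continuous, and that passing from a net to a sequence indexed by $n$ is legitimate (using properness to exhaust $G$ by the balls $B_n$). Once the norm bound is secured uniformly in $n$, the sequence $(\psi_n)$ is by definition completely bounded tame cuts with $a = 0$, completing the equivalence.
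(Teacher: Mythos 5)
Your forward direction is fine: a measurable proper length function is bounded on compact sets, so every compact set lies in some ball $B_N$, and the tame cuts are then eventually identically $1$ on any compact set while staying uniformly bounded in $M_0A$; this gives the net required for weak amenability. Note, though, that the paper does not argue either direction itself --- its entire proof is a citation of \cite[Proposition 1.1]{haagerup1989}, where the equivalence between ``uniformly close to $1$ on compacta'' and ``identically $1$ on compacta'' with controlled multiplier norm is established. Your proposal is therefore an attempt to reprove the cited result, and the reverse direction has a genuine gap exactly at the step you flagged as the hard part.

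Concretely, the corrected function $\psi+(1-\psi)\eta_n$ does equal $1$ on $B_n$, is continuous and compactly supported, but the norm of the correction term is not controlled by your argument. The only estimate available in general is
\begin{equation*}
\|(1-\psi)\eta_n\|_{M_0A}\;\le\;\|(1-\psi)\eta_n\|_{A}\;\le\;\|1-\psi\|_{M_0A}\,\|\eta_n\|_{A}\;\le\;(1+C)\,\|\eta_n\|_{A},
\end{equation*}
and $\|\eta_n\|_A$ is not uniformly bounded in $n$ unless $G$ is amenable: Eymard's construction only gives $\|\eta_n\|_A\le\left(\mu(B_nV)/\mu(V)\right)^{1/2}$, which one keeps bounded by taking $V$ F{\o}lner-like for $B_n$. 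Your proposed rescue --- that $1-\psi$ is small in sup-norm near $B_n$ --- does not repair this, for two reasons: $1-\psi$ is only small on the set where you arranged uniform convergence, not on all of $\supp\eta_n$ (and even if you enlarge that set, which you may, the problem persists); and more fundamentally neither $\|\cdot\|_A$ nor $\|\cdot\|_{M_0A}$ of a compactly supported continuous function is dominated by its sup-norm, so smallness of $|1-\psi|$ on $\supp\eta_n$ yields no smallness of $\|(1-\psi)\eta_n\|_A$. For \emph{discrete} groups the scheme can be salvaged by taking $\eta_n=\n1_{B_n}$ and demanding $\sup_{B_n}|1-\psi|\le|B_n|^{-1}$, since then $\|\n1_{B_n}(1-\psi)\|_{M_0A}\le\sum_{x\in B_n}|1-\psi(x)|\,\|\delta_x\|_{M_0A}\le 1$; this is essentially the argument the author drafted (it survives, commented out, in the source), but it relies on $\|\delta_x\|_{M_0A}=1$ and has no analogue for general locally compact $G$, where already $\n1_{[0,1]}\notin A(\R)$. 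To complete your proof you must either quote the relevant implication of Cowling--Haagerup's Proposition 1.1 or supply the missing norm estimate by a genuinely different mechanism.
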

\begin{proof} See \cite[Proposition 1.1]{haagerup1989}.
\iffalse
 by hypothesis, there is a sequence $(\psi_n)$ of finitely supported functions such that $\psi_n\rightarrow 1$ on each finite subsets and there is a constant $C'\geq 0$ such that $\|\psi_n\|_{M_0A}\leq C'$. Note that $B_m$ is always finite as $\ell$ is proper. Thus, there is an integer $n_m\in\N$ with $\|(1-\psi_{n_m})|_{B_m} \|_{\infty}\leq \varepsilon_m=|B_m|^{-1}$. Put $\varphi_m = \n1_{B_m^c}\psi_{n_m} + \n1_{B_m}$. Since $\Gamma$ acts on $M_0A(\Gamma)$ by isometries, we have
\begin{align*}
\begin{split}
\|\varphi\|_{M_0A}&\leq \|\varphi - \psi_{n_m}\|_{M_0A}+C' \leq \|\n1_{B_m}-\psi_{n_m}\|_{M_0A}+ C'\\
&\leq \sum_{x\in\B_m} |1-\psi_{n_m}(x)| \|\delta_x\|_{M_0A}+C' \\
&\leq \sum_{x\in\B_m}\varepsilon_m \|\delta_x\|_{M_0A}+C'\\
&=\|\delta_e\|_{M_0A}+C'=C.
\end{split}
\end{align*} This completes the proof. Note that the proper length $\ell$ was arbitrary.
\fi
\end{proof}
\begin{rmk} Similar arguments can be made for other approximation properties such as  multiplier approximation property and $n$-positive approximation property.
\end{rmk}
\begin{rmk} By Proposition \ref{example Z} and Proposition \ref{example weak amenability}, the group  $(\Z, \log(1+\log (1+|\cdot|)))$ has tame cuts but no characteristic tame cuts. 
\end{rmk}
\begin{prop}
Let $\Gamma$ be a discrete group satisfying RD with respect to a proper length function $\ell$. Then $(\Gamma,\ell)$ has characteristic tame cuts.
\end{prop}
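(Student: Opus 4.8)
The plan is to take the most natural candidate for the characteristic tame cuts, namely $\varphi_n = \n1_{B_n}$, the characteristic function of the ball of radius $n$, and to estimate its multiplier norm directly from the RD inequality. Since $\ell$ is proper and $\Gamma$ is discrete, each ball $B_n$ is finite, so $\varphi_n\in C_c(\Gamma)$ is genuinely a compactly supported characteristic function, and trivially $\varphi_n|_{B_n}\equiv 1$. It then remains only to produce constants $C',a'\geq 0$ with $\|\varphi_n\|_{MA}\leq C' n^{a'}$, after which the definition of characteristic tame cuts is satisfied outright.

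First I would pass to the description of the multiplier norm in terms of the reduced $C^*$-algebra. By the equivalence of conditions (i) and (iii) recorded in the preliminaries, $\|\varphi_n\|_{MA}$ equals the norm of the extended multiplication operator $m_{\varphi_n}':C^*_\lambda(\Gamma)\to C^*_\lambda(\Gamma)$. On the dense subalgebra $\lambda(C_c(\Gamma))$ this operator is just truncation: using that a Fourier multiplier acts on the group algebra side by $\lambda(f)\mapsto\lambda(\varphi f)$ (pointwise product on functions), one gets $m_{\varphi_n}'(\lambda(f)) = \lambda(\n1_{B_n} f) = \lambda(f|_{B_n})$ for every $f\in C_c(\Gamma)$. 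Thus the whole problem reduces to bounding $\|\lambda(f|_{B_n})\|$ in terms of $\|\lambda(f)\|$, uniformly over $f$.

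The heart of the argument is a short chain of inequalities combining RD with the elementary estimate $\|f\|_2\leq\|\lambda(f)\|$. Since $f|_{B_n}$ is supported in $B_n$, the RD property applied to $f|_{B_n}$ gives $\|\lambda(f|_{B_n})\|\leq C(1+n)^a\|f|_{B_n}\|_2$. Truncation can only decrease the $\ell^2$-norm, so $\|f|_{B_n}\|_2\leq\|f\|_2$; and since $\|f\|_2 = \|\lambda(f)\delta_e\|_2\leq\|\lambda(f)\|$ (evaluating $\lambda(f)$ on the unit vector $\delta_e$), we conclude $\|\lambda(f|_{B_n})\|\leq C(1+n)^a\|\lambda(f)\|$. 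Taking the supremum over $f$ yields $\|\varphi_n\|_{MA} = \|m_{\varphi_n}'\|\leq C(1+n)^a$, and absorbing the shift into the constant (for $n\geq 1$) gives $\|\varphi_n\|_{MA}\leq C' n^a$, so $(\n1_{B_n})$ is a sequence of characteristic tame cuts for $(\Gamma,\ell)$. I do not anticipate any serious obstacle: the proof is essentially a repackaging of the RD inequality, and the only point demanding care is the identification of the multiplier norm with the truncation-operator norm on $C^*_\lambda(\Gamma)$, which is precisely the equivalence stated earlier, together with the elementary but crucial inequality $\|f\|_2\leq\|\lambda(f)\|$ that converts the $\ell^2$-bound coming from RD into an operator-norm bound.
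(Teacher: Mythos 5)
Your proposal is correct and follows essentially the same route as the paper: both take $\varphi_n=\n1_{B_n}$, combine the RD inequality with the elementary bound $\|f\|_2=\|\lambda(f)\delta_e\|_2\leq\|\lambda(f)\|$, and identify $\|\varphi_n\|_{MA}$ with the norm of the truncation operator on $C^*_\lambda(\Gamma)$. No gaps.
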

\begin{proof}
Note that for any non-zero function $f\in C_c(\Gamma)$, we have
\begin{align*}
\|\lambda(f)\| \geq \dfrac{\langle \lambda (f)\delta_e| f\rangle }{\|f\|_2} = \|f\|_2.
\end{align*} It follows that if we choose $\varphi_n = \n1_{B_n}$, we have 
\begin{align*}
\begin{split}
\|\lambda(\varphi_n f)\| &\leq C(1+n)^a \|\varphi_n f\|_2 \\
&\leq C(1+n)^a \|f\|_2 \\
&\leq C(1+n)^a \|\lambda (f)\|,
\end{split}
\end{align*} 
for all $f\in C_c(\Gamma)$. It follows that $\|\varphi_n\|_{MA}\leq C(1+n)^a$, completing the proof.
\end{proof}

\section{Proof of Theorem A and Theorem B}\label{application}
As a motivation to investigate tame cuts, we provide two applications of tame cuts. The first one is about to continuity of the induction map. The following lemma shows the importance of induction map and is used to prove that amenability, a-T-menability, weak-amenability, and Kazhdan's property (T) are inherited by lattices.
\begin{lem}[Haagerup, \cite{haagerup2016}]\label{induction lemma} Let $\Gamma$ be a lattice in a locally compact group $G$. Let $\sigma:G/\Gamma\rightarrow G$ be a Borel cross section, and $\Omega\subseteq G$ its image. Denote by $\mu_\Gamma$ the counting measure on $G$ corresponding to $\Gamma$. Then
\begin{align*}
\Phi:\ell^\infty (\Gamma) \rightarrow C_b (G),\quad \varphi\mapsto\widehat{\varphi} = \n1_{\Omega} * (\varphi \mu_\Gamma)*\widetilde{\n1}_{\Omega}
\end{align*} is well defined. Moreover, the restrictions of this map give bounded linear maps 
\begin{align*}
\Phi: A(\Gamma)\rightarrow A(G)\quad and \quad \Phi:M_0A(\Gamma)\rightarrow M_0A(G).
\end{align*}
Moreover, if $\varphi$ is a positive definite function on $\Gamma$, so is $\widehat{\varphi}$ on $G$.
\end{lem}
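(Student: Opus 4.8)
The plan is to reduce all three assertions to a single explicit pointwise formula for $\widehat\varphi$ in terms of the $\Gamma$-cocycle attached to $\Omega$. Since $\Gamma$ is a lattice, $G$ is unimodular and $\Omega$ is a Borel fundamental domain for the right $\Gamma$-action, so $G=\bigsqcup_{\gamma\in\Gamma}\Omega\gamma$ with $|\Omega|<\infty$; I write each $g\in G$ uniquely as $g=s(g)\kappa(g)$ with $s(g)\in\Omega$ and $\kappa(g)\in\Gamma$. First I would unwind the two convolutions and the counting measure $\mu_\Gamma$. Using that for fixed $a\in\Omega$ there is exactly one $\gamma\in\Gamma$ with $x^{-1}a\gamma\in\Omega$, namely $\gamma=\kappa(x^{-1}a)^{-1}$, one obtains for every $\varphi\in\ell^\infty(\Gamma)$
\begin{align*}
\widehat\varphi(x)=\int_\Omega\varphi\big(\kappa(x^{-1}a)^{-1}\big)\,da .
\end{align*}
This gives at once $|\widehat\varphi(x)|\le|\Omega|\,\|\varphi\|_\infty$, so $\Phi$ is well defined into bounded functions; continuity of $\widehat\varphi$ I would extract from strong continuity of translation, writing $\widehat\varphi(x)=\int_G\n1_\Omega(xc^{-1})F(c)\,dc$ with $F=(\varphi\mu_\Gamma)*\widetilde{\n1}_\Omega$ bounded and $\n1_\Omega\in L^2(G)$, the relevant integration staying in a fixed set of finite measure as $x$ varies locally.

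For the completely bounded statement I would invoke de Canniere--Haagerup's characterization (iv). Given $\varphi\in M_0A(\Gamma)$, choose $P,Q\colon\Gamma\to\mathcal H$ with $\varphi(\beta^{-1}\alpha)=\langle P(\alpha),Q(\beta)\rangle$ and $\sup_\alpha\|P(\alpha)\|\,\sup_\beta\|Q(\beta)\|\le\|\varphi\|_{M_0A(\Gamma)}$. Substituting $x\mapsto y^{-1}x$ above and applying the cocycle identity $\kappa(x^{-1}ya)=\kappa(x^{-1}s(ya))\,\kappa(ya)$ gives
\begin{align*}
\widehat\varphi(y^{-1}x)=\int_\Omega\big\langle P(\kappa(x^{-1}s(ya))^{-1}),\,Q(\kappa(ya))\big\rangle\,da .
\end{align*}
The decisive step is the change of variables $b=s(ya)$: since $y\Omega$ is again a fundamental domain and $s|_{y\Omega}$ is a measure-preserving bijection onto $\Omega$ (piecewise a left translation by $y$ followed by a right translation by an element of $\Gamma$, both Haar-preserving by unimodularity), $b$ runs over $\Omega$ with $da=db$, and one checks $\kappa(ya)=\kappa(y^{-1}b)^{-1}$. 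This decouples the two arguments and yields $\widehat\varphi(y^{-1}x)=\langle\tilde P(x),\tilde Q(y)\rangle$ in $L^2(\Omega,\mathcal H)$, where $\tilde P(x)(b)=P(\kappa(x^{-1}b)^{-1})$ and $\tilde Q(y)(b)=Q(\kappa(y^{-1}b)^{-1})$. As $\|\tilde P(x)\|^2\le|\Omega|\,\sup_\alpha\|P(\alpha)\|^2$ and likewise for $\tilde Q$, characterization (iv) gives $\widehat\varphi\in M_0A(G)$ with $\|\widehat\varphi\|_{M_0A(G)}\le|\Omega|\,\|\varphi\|_{M_0A(\Gamma)}$.

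For $\Phi\colon A(\Gamma)\to A(G)$ and the preservation of positive-definiteness I would reuse the same computation. Taking $P(\alpha)=\lambda_\Gamma(\alpha)\xi$ and $Q(\beta)=\lambda_\Gamma(\beta)\eta$ (so $\varphi(\gamma)=\langle\lambda_\Gamma(\gamma)\xi,\eta\rangle$ is an arbitrary coefficient of $\lambda_\Gamma$), the maps $\tilde P,\tilde Q$ become $x\mapsto U(x)v$ and $y\mapsto U(y)w$ for the induced representation $U=\mathrm{Ind}_\Gamma^G\lambda_\Gamma$, which by induction in stages is $\cong\lambda_G$ for the unimodular pair $(G,\Gamma)$. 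Hence $\widehat\varphi(y^{-1}x)=\langle U(y^{-1}x)v,w\rangle$ is a coefficient of $\lambda_G$, so $\widehat\varphi\in A(G)$ with $\|\widehat\varphi\|_{A(G)}\le|\Omega|\,\|\varphi\|_{A(\Gamma)}$. If in addition $\varphi$ is positive definite, write $\varphi(\beta^{-1}\alpha)=\langle P(\alpha),P(\beta)\rangle$ with $P(\alpha)=\pi(\alpha)v$ for a unitary representation $\pi$; then the displayed formula reads $\widehat\varphi(y^{-1}x)=\langle\tilde P(x),\tilde P(y)\rangle$, a positive-definite kernel on $G\times G$, so $\widehat\varphi$ is positive definite.

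The main obstacle I anticipate is precisely the decoupling substitution $b=s(ya)$: a naive change of variables leaves $x$ and $y$ entangled through $s(ya)$, and it is only after verifying that $s|_{y\Omega}\to\Omega$ is a measure-preserving bijection (where unimodularity of $G$ is essential) that the Gilbert-type representation of $\varphi$ on $\Gamma$ converts into the symmetric pair $\tilde P,\tilde Q$ for $\widehat\varphi$ on $G$. A secondary technical point is the regularity of $\tilde P,\tilde Q$ needed to feed into characterization (iv); since $\widehat\varphi$ is already continuous by the first paragraph, one may pass to the measurable form of the characterization or regularize the Borel section.
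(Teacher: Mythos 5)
The paper does not prove this lemma at all: it is quoted verbatim from Haagerup \cite{haagerup2016} (see also de Canni\`ere--Haagerup), so there is no internal proof to compare against. Your reconstruction is essentially the standard argument from that reference, and it is correct. The pointwise formula $\widehat\varphi(x)=\int_\Omega\varphi(\kappa(x^{-1}a)^{-1})\,da$ agrees (by associativity of the convolution) with the formula $\int_\Omega\varphi(\gamma(x\omega))\,d\omega$ that the paper records after the lemma; the cocycle identity $\kappa(gh)=\kappa(gs(h))\kappa(h)$ and the fact that $s|_{y\Omega}\colon y\Omega\to\Omega$ is a piecewise-translation, hence measure-preserving, bijection (using unimodularity of $G$, which holds since $G$ contains a lattice) are exactly the decoupling devices used in the literature, and your identification of $\tilde P(x)(b)=P(\kappa(x^{-1}b)^{-1})$ with $\mathrm{Ind}_\Gamma^G$ applied to the coefficient data correctly yields both the $A(\Gamma)\to A(G)$ bound and the preservation of positive definiteness. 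The one point you flag --- that characterization (iv) as stated requires \emph{continuous} $P,Q$ while $\kappa$ is only Borel --- is genuine but harmless: either invoke the measurable form of the Gilbert--Jolissaint criterion (legitimate here since $\widehat\varphi$ is already shown continuous), or note that $\tilde P$ is in fact norm-continuous, since
\begin{align*}
\|\tilde P(x)-\tilde P(x_0)\|^2\leq 4\sup_\alpha\|P(\alpha)\|^2\cdot\big|\{b\in\Omega:\kappa(x^{-1}b)\neq\kappa(x_0^{-1}b)\}\big|,
\end{align*}
and the right-hand side tends to $0$ as $x\to x_0$ by continuity of translation on $L^1(G)$ applied to $\mathbbm{1}_\Omega$ together with dominated convergence over the cells $\Omega\gamma$. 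With that remark supplied, the proof is complete.
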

As we have a disjoint union $G=\bigsqcup_{y\in \Gamma} \Omega y$, the map $\gamma : G\rightarrow \Gamma$ such that $x\in \Omega \gamma (x)$ is well defined, and we have 
\begin{align*}
\Phi (x) = \int_{\Omega} \varphi(\gamma (x\omega)) d\omega\quad \text{for all} \quad x\in G.
\end{align*} where $d\omega$ is the normalized Haar measure on $G$ so that $\int_\Omega d\omega = 1$.

\begin{thmA}\label{thmA}
Let $\Gamma$ be a uniform  lattice in $G=SL(3,\R)$. Then the map
\begin{align*}
\Phi:MA(\Gamma)\rightarrow MA(G), \quad \varphi\mapsto\widehat{\varphi} = \n1_{\Omega} * (\varphi \mu_\Gamma)*\widetilde{\n1}_{\Omega}
\end{align*} is not bounded.
\end{thmA}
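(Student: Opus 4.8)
The plan is to test the putative boundedness of $\Phi$ against the \emph{characteristic tame cuts} that $\Gamma$ inherits from Lafforgue's rapid decay theorem, and to show that their images under $\Phi$ have exponentially large $MA(G)$-norm; since a bounded map can only produce polynomially large outputs from polynomially bounded inputs, this yields the desired contradiction. Throughout I would work with the Cartan length $L(x)=\log\|x\|+\log\|x^{-1}\|$ on $G=SL(3,\R)$. The crucial structural fact is that $L$ is \emph{exactly} $K$-bi-invariant for $K=SO(3,\R)$, because orthogonal matrices are $\ell^2$-isometries; moreover $L|_\Gamma$ is equivalent to the word length of $\Gamma$.

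First I would produce the test sequence. Since $\Gamma$ has RD (Theorem \ref{Vincent1}), the last proposition of Section \ref{PV} supplies characteristic tame cuts, and by equivalence of length functions on $\Gamma$ I may take them of the form $\varphi_n=\n1_{B_n}$ with $B_n=\{\gamma\in\Gamma:L(\gamma)\le n\}$ and $\|\varphi_n\|_{MA(\Gamma)}\le C(1+n)^a$ for fixed $C,a\ge 0$. These are finitely supported, hence $\varphi_n\in A(\Gamma)\subseteq MA(\Gamma)$, so no well-definedness issue arises and it suffices to show $\|\Phi(\varphi_n)\|_{MA(G)}$ outgrows every polynomial.

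Next I would control the induced functions $\widehat{\varphi_n}=\Phi(\varphi_n)$, which lie in $A(G)\subseteq MA(G)$ by Lemma \ref{induction lemma}. Choosing the section so that $\Omega$ is relatively compact with $L|_{\Omega\cup\Omega^{-1}}\le R$, and writing $\gamma(x\omega)=\omega'^{-1}x\omega$ with $\omega'\in\Omega$, one gets $L(\gamma(x\omega))\le L(x)+2R$, so the formula $\widehat{\varphi_n}(x)=\int_\Omega\n1_{B_n}(\gamma(x\omega))\,d\omega$ forces $\widehat{\varphi_n}(x)=1$ whenever $L(x)\le n-2R$. I then average over $K\times K$: by Lemma \ref{KKavg} the double-coset average $\psi_n=\dot{\widehat{\varphi_n}}$ lies in $MA(G)$ with $\|\psi_n\|_{MA(G)}\le\|\widehat{\varphi_n}\|_{MA(G)}$, and because $L$ is $K$-bi-invariant the value $\psi_n(x)=1$ survives for all $x$ with $L(x)\le n-2R$. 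Evaluating at the Cartan diagonal element
\[
a_t=\begin{pmatrix} e^t & 0 & 0\\ 0 & 1 & 0\\ 0 & 0 & e^{-t}\end{pmatrix},\qquad L(a_t)=2t,
\]
with $t=\tfrac12(n-2R)$ gives $\psi_n(a_t)=1$. Feeding the $K$-bi-invariant function $\psi_n\in C_0(G)$ into the rigidity inequality $(\ref{rigidity ineq})$ of Theorem \ref{rigidity ineq thm} yields
\[
\|\widehat{\varphi_n}\|_{MA(G)}\ \ge\ \|\psi_n\|_{MA(G)}\ \ge\ C\,e^{t/3}\,|\psi_n(a_t)|\ =\ C\,e^{(n-2R)/6}.
\]

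Finally, if $\Phi$ were bounded with norm $\|\Phi\|$, then $C\,e^{(n-2R)/6}\le\|\Phi\|\,\|\varphi_n\|_{MA(\Gamma)}\le\|\Phi\|\,C(1+n)^a$ for all $n\in\N$, which is impossible since an exponential dominates every polynomial; hence $\Phi$ is unbounded. The hard part will be the middle step, namely transferring the polynomially controlled, $\Gamma$-supported cuts into genuinely large $K$-bi-invariant test functions on $G$. This succeeds only because of two compatibilities: the $K$-bi-invariance of the Cartan length (so that the $K\times K$ averaging does not destroy the value $1$ along the $a_t$ direction), and the persistence of $\widehat{\varphi_n}\equiv 1$ on a ball of radius $\approx n$ despite the smearing by the compact fundamental domain $\Omega$. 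Once these are secured, the exponential rigidity of Lafforgue--de la Salle and the polynomial tameness coming from rapid decay collide, and the contradiction is immediate.
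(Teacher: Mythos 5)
Your proposal is correct and follows essentially the same route as the paper: characteristic tame cuts from Lafforgue's RD theorem, the persistence of $\widehat{\varphi}_n\equiv 1$ on a ball of radius $n-O(1)$ after inducing and $K\times K$-averaging, and the Lafforgue--de la Salle rigidity inequality evaluated at the Cartan element $a_t$ to force exponential growth against the polynomial bound. The only (cosmetic) differences are that you argue directly on norms rather than by contradiction and explicitly note that finite support of the test functions sidesteps the well-definedness issue the paper flags in its remark.
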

\begin{proof} Recall that the length functions $\ell_S$, $\ell_C|_\Gamma$, and $L|_\Gamma$ are all equivalent on $\Gamma$ (cf. Section \ref{rapid decay}). We choose $L$ as the main length function on both $\Gamma$ and $G$.

By contradiction, suppose the map 
$\Phi:MA(\Gamma)\rightarrow MA(G)$ is bounded. By Theorem \ref{Vincent1}, $\Gamma$ has RD, and a fortiori characteristic tame cuts, so there is finitely supported characteristic functions $\varphi_n$ such that 
\begin{align*}
\|\varphi_n\|_{MA(\Gamma)}\leq Cn^a \quad \text{and} \quad \varphi_n |_{B_n} = 1  \quad \text{for all} \quad n\in\N.
\end{align*} By continuity of $\Phi$, there is a constant $C'>0$ such that  
\begin{align*}
\|\widehat{\varphi}_n\|_{MA(G)}\leq C' n^a \quad \text{for all} \quad n\in\N.
\end{align*}
The $K$-$K$-averaging of $\widehat{\varphi}_n$  is a $K$-bi-invariant Fourier multiplier of $G$ with 
\begin{align*}
\|\dot{\widehat{\varphi}}_n\|_{MA(G)} \leq \|\widehat{\varphi}_n\|_{MA(G)}\leq C'n^a \quad \text{for all} \quad n\in\N.
\end{align*}
Applying the rigidity inequality (\ref{rigidity ineq}) on $\dot{\widehat{\varphi}}_n$, we get 
\begin{align}\label{ineq1}
C'' e^{t/3}  \left| \dot{\widehat{\varphi}}_n \begin{pmatrix}
e^t & 0&0\\
0 &1&0\\
0 &0& e^{-t}
\end{pmatrix}\right| \leq \|\dot{\widehat{\varphi}}_n\|_{MA}\leq C'n^a \quad \text{for all}\quad n\in\N, t\in\R_+.
\end{align} Put $c=\max \{L (\omega):\omega \in \Omega\}$ and 
choose $t = n/4 - c$ so that $L(diag(e^t,1,e^{-t})) = 2t <n -2c$. Recall that we have
\begin{align*}
\dot{\widehat{\varphi}}_n (x) = 
\int_K \int_K \int_\Omega \varphi_n (\gamma (k_1 x k_2 \omega)) d\omega dk_1 dk_2.
\end{align*} by construction.
Note that 
\begin{align*}
L(\gamma (k_1 x k_2 \omega )) = L(\omega 'k_1 x k_2\omega )\leq L(k_1 xk_2) +L(\omega ') +L(\omega)\leq 2c + L(x).
\end{align*} Thus, if $L(x)\leq n-2c$, we have $\dot{\widehat{\varphi}}_n (x) = 1$. Therefore, the left hand side of (\ref{ineq1}) grows exponentially while the right hand side is polynomial, which gives the desired contradiction.
\end{proof}
\begin{rmk}
We do not know if the map $\Phi : MA(\Gamma)\rightarrow MA(G)$ is well defined.
\end{rmk}

\begin{thmB}\label{prop17}
Let $\Gamma$ be a uniform lattice in $G=SL(3,\R)$. Then $M_0 A(\Gamma)$ is a proper subalgebra of $MA(\Gamma)$.
\end{thmB}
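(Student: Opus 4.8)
The plan is to deduce this from Theorem A together with the induction Lemma \ref{induction lemma}, by an open mapping argument. I would first record two general facts that hold for every locally compact group: the inclusion $M_0A\subseteq MA$ is contractive, i.e. $\|\psi\|_{MA}\leq\|\psi\|_{M_0A}$, and both $M_0A$ and $MA$ are Banach algebras (complete in their respective norms). The first fact already shows that $M_0A(\Gamma)$ is a norm-decreasing subalgebra of $MA(\Gamma)$, so the only thing left to establish is that the inclusion is \emph{not} onto; the second fact is what makes the open mapping theorem available.

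I would argue by contradiction, assuming $M_0A(\Gamma)=MA(\Gamma)$ as sets. Then the inclusion $\iota\colon M_0A(\Gamma)\hookrightarrow MA(\Gamma)$ is a continuous bijection between Banach spaces, so by the open mapping (equivalently, bounded inverse) theorem its inverse is bounded as well; equivalently, the two norms on $\Gamma$ are equivalent, and there is a constant $K>0$ with
\begin{align*}
\|\varphi\|_{M_0A(\Gamma)}\leq K\|\varphi\|_{MA(\Gamma)}\quad\text{for all}\quad\varphi\in MA(\Gamma).
\end{align*}

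Next I would feed this equivalence into the induction map. By Lemma \ref{induction lemma} the restriction $\Phi\colon M_0A(\Gamma)\to M_0A(G)$ is bounded, say with a constant $C_\Phi>0$. Composing with the contractive inclusion $M_0A(G)\subseteq MA(G)$, for every $\varphi\in MA(\Gamma)=M_0A(\Gamma)$ I would get
\begin{align*}
\|\widehat{\varphi}\|_{MA(G)}\leq\|\widehat{\varphi}\|_{M_0A(G)}\leq C_\Phi\|\varphi\|_{M_0A(\Gamma)}\leq C_\Phi K\|\varphi\|_{MA(\Gamma)}.
\end{align*}
In particular every $\widehat{\varphi}$ would lie in $MA(G)$ and the map $\Phi\colon MA(\Gamma)\to MA(G)$ would be bounded, contradicting Theorem A, i.e. the non-boundedness of this very map. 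Hence $\iota$ cannot be onto, and $M_0A(\Gamma)$ is a proper subalgebra of $MA(\Gamma)$.

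The whole argument is just a composition of already-established continuity properties, so the genuine content is pushed into Theorem A (where rapid decay of $\Gamma$ and the Lafforgue--de la Salle rigidity inequality do the work). The only point that needs real care here is the open mapping step: I must make sure that $MA(\Gamma)$ and $M_0A(\Gamma)$ are both complete and that $\iota$ is genuinely norm-decreasing, so that coincidence of the underlying sets upgrades to equivalence of norms rather than remaining a mere set-theoretic identity. I expect this completeness/open-mapping bookkeeping to be the only obstacle.
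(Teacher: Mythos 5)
Your proposal is correct and follows essentially the same route as the paper: assume $M_0A(\Gamma)=MA(\Gamma)$, upgrade the contractive inclusion to an equivalence of norms via the open mapping (resp.\ closed graph) theorem, and then derive a contradiction from the induction map and the rigidity inequality. The only cosmetic difference is that you invoke Theorem A as a black box by showing $\Phi\colon MA(\Gamma)\to MA(G)$ would be bounded through the factorization $MA(\Gamma)=M_0A(\Gamma)\to M_0A(G)\subseteq MA(G)$, whereas the paper re-runs the Theorem A computation (tame cuts, $K$-bi-averaging, rigidity inequality) with the $M_0A$ norms; this is a slight streamlining, not a different argument.
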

\begin{proof}
Suppose $M_0A(\Gamma) = MA(\Gamma)$. Since the inclusion $M_0A(\Gamma)\rightarrow MA(\Gamma)$ is a contraction between two Banach spaces, two norms $\|\cdot\|_{MA}$ and $\|\cdot\|_{M_0A}$ are equivalent by the closed graph theorem applied on the inverse map.

Let us use the functions $\varphi_n$, $\widehat{\varphi}_n$, and $\dot{\widehat{\varphi}}_n$ from Theorem A. Since two norms are equivalent and $(\varphi_n)$ is tame cuts, this sequence is also completely bounded tame cuts. It follows that there are constants $C,a\geq 0$ such that
\begin{align*}
\|\varphi_n\|_{M_0A}\leq Cn^a \quad \text{for all} \quad n\in\N.
\end{align*}
By Lemma \ref{induction lemma}, we have $\|\dot{\widehat{\varphi}}_n\|_{M_0A}\leq \|\widehat{\varphi}_n\|_{M_0A}\leq Cn^a$. Again the rigidity inequality on $\dot{\widehat{\varphi}}_n$ gives a desired contradiction for the same choice of $t$ as in Theorem A.
\end{proof}

\section{Proof of Theorem C}
\begin{defn}
Let $G$ be a locally compact group, $H$ a closed subgroup of $G$, and $K$ a compact subgroup of $G$. A function $\varphi \in C(H)$ is said \textit{K-bi-invariant} if
there is a $K$-bi-invariant continuous function on $G$ whose restriction on $H$ is exactly $\varphi$.
\end{defn}
\begin{thmC}\label{ppt T schur no tame cuts}
Let $H$ be closed subgroup of an unbounded locally compact group $G$. Suppose that $H$ satisfies property $(T_{Schur},G,K,\ell)$ for a compact subgroup $K$ and a proper length function $\ell$ of $G$. Then $(H,\ell|_H)$ does not admit any $K$-bi-invariant tame cuts.
\end{thmC}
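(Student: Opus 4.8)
The plan is to argue by contradiction: assuming $K$-bi-invariant tame cuts $(\varphi_n)$ exist for $(H,\ell|_H)$, I will feed their $K$-bi-invariant extensions $\widetilde\varphi_n\in C(G)$ into the defining property of $(T_{Schur},G,K,\ell)$ and extract a \emph{positive} lower bound on the fixed function $\phi\in C_0(G)$ along $H$, contradicting the fact that $\phi$ vanishes at infinity. Before starting I would record that $H$ is unbounded: since $G$ is unbounded and $H$ enjoys property $(T_{Schur},G,K,\ell)$, the preceding lemma (compact subgroups of an unbounded group never have this property) forces $H$ to be non-compact, and a closed non-compact subgroup is unbounded for the proper length $\ell$.

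The heart of the argument is a single estimate on the quantities $\|\widetilde\varphi_n\|_{MA(H,m)}$, and the crucial point — which is exactly where ``tame'' (polynomial rather than exponential growth) enters — is that it yields a constant that does \emph{not} depend on $n$. I would split according to the test radius $m$. If $m\le n$, then any admissible test function $\psi$ satisfies $\supp(\psi)\subseteq B_m\cap H\subseteq B_n$, where $\varphi_n\equiv 1$; hence $\varphi_n\psi=\psi$ and $\|\lambda_H(\varphi_n\psi)\|=\|\lambda_H(\psi)\|\le 1$, so $\|\widetilde\varphi_n\|_{MA(H,m)}\le 1$. If $m>n$, the trivial bound $\|\widetilde\varphi_n\|_{MA(H,m)}\le\|\varphi_n\|_{MA(H)}\le Cn^a$ applies (since $\lambda_H(\varphi_n\psi)=m'_{\varphi_n}(\lambda_H(\psi))$ and $\|m'_{\varphi_n}\|=\|\varphi_n\|_{MA(H)}$). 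Consequently
\[
\sup_{m\in\N}\|\widetilde\varphi_n\|_{MA(H,m)}\,e^{-sm}\ \le\ \max\Big(1,\ \sup_{k\in\N}Ck^a e^{-s(k+1)}\Big)\ =:\ D_0<\infty,
\]
a bound uniform in $n$, precisely because the polynomial factor $Cn^a$ is only ever multiplied by $e^{-sm}$ with $m>n$. Thus every $\widetilde\varphi_n$ satisfies $\|\widetilde\varphi_n\|_{MA(H,m)}\le D_0 e^{sm}$ for all $m$, with one and the same $D_0$.

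Property $(T_{Schur},G,K,\ell)$ then yields, for each $n$, a constant $\varphi_{n,\infty}$ with $|\widetilde\varphi_n(x)-\varphi_{n,\infty}|\le D_0\,\phi(x)$ for all $x\in G$. I would next identify $\varphi_{n,\infty}=0$: choosing $x_k\in H$ with $\ell(x_k)\to\infty$ (possible as $H$ is unbounded), properness sends $x_k\to\infty$ in $G$, so $\phi(x_k)\to0$ and hence $\widetilde\varphi_n(x_k)\to\varphi_{n,\infty}$; but $\widetilde\varphi_n(x_k)=\varphi_n(x_k)=0$ eventually by compact support of $\varphi_n$, forcing $\varphi_{n,\infty}=0$. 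Restricting the resulting inequality to $H$ gives $|\varphi_n(x)|\le D_0\phi(x)$ for all $x\in H$ and all $n$.

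Finally I would invoke $\varphi_n\equiv 1$ on $B_n$: for every $x\in B_n\cap H$ this gives $\phi(x)\ge 1/D_0$, and since $n$ is arbitrary and $\bigcup_n(B_n\cap H)=H$, I conclude $\phi\ge 1/D_0>0$ on all of $H$. As $H$ is unbounded and $\phi\in C_0(G)$, this is the desired contradiction. The main obstacle is the estimate in the second paragraph: one must resist applying property $(T_{Schur})$ to each $\varphi_n$ with the naive constant $Cn^a$ — which only yields $\phi(x)\gtrsim(\ell(x)+1)^{-a}$ and proves nothing — and instead notice that the flatness $\varphi_n\equiv 1$ on $B_n$ caps $\|\widetilde\varphi_n\|_{MA(H,m)}$ by $1$ on exactly the range $m\le n$ where $e^{sm}$ is small, so that a single constant $D_0$ serves for all $n$.
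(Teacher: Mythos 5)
Your proof is correct and follows essentially the same route as the paper's: you split the estimate of $\|\widetilde{\varphi}_n\|_{MA(H,m)}$ according to whether the test radius exceeds the cut index, use flatness on $B_n$ in one regime and the polynomial multiplier bound in the other to extract a single constant $D$ making $\|\widetilde{\varphi}_n\|_{MA(H,m)}\leq De^{sm}$ for all $n,m$, and then let property $(T_{Schur},G,K,\ell)$ force $|\varphi_n|\leq D\phi$ on $H$, contradicting $\phi\in C_0(G)$ since $\varphi_n\equiv 1$ on $B_n$. You are in fact somewhat more careful than the paper in explicitly identifying the limits $\varphi_{n,\infty}=0$ and in justifying that $H$ is unbounded via the preceding lemma.
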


\begin{proof}
We prove by contradiction. Assume that there exists $K$-bi-invariant tame cuts $(\varphi_m)_{m\in\N}$ for $(H,\ell|_H)$. There are constants $C,a\geq 0$ such that
\begin{align}
\label{363310}&\|\varphi_m\|_{MA(H)}\leq C m^{a}\\
\label{363311}&\varphi_m|_{B_m}\equiv 1 
\end{align} for all $m\in\N$. Take any function $f\in C_c(H)$ with $\supp (f) \subseteq B_n$. If $m\geq n$, the Fourier multiplier $\varphi_m$ acts trivially on $f$, so $\|\lambda_H \left (\varphi_m f\right )\|= \|\lambda_H(f)\|$. If $m<n$, then we have 
\begin{align*}
\|\lambda_H \left (\varphi_m f\right )\|\leq Cm^a \|\lambda_H(f)\|\leq Cn^a \|\lambda_H(f)\|
\end{align*} by $(\ref{363310})$.
Unifying these two cases, if we denote by $\varphi_m'\in C(G)$ a $K$-bi-invariant extension of $\varphi_m$, we get the inequality $\|\varphi_m'\|_{MA(H,n)}\leq C n^a$ for all $n,m\in \N$. Let $s>0$ and $\phi\in C_0(G)$ be from property $(T_{Schur},G,K,\ell)$. Put $D = \sup_{n\in\N} Cn^a e^{-sn}$
so that we get 
\begin{align*}
\|\varphi_m'\|_{MA(H,n)}\leq D e^{sn}\quad \text{for all}\quad m,n\in \N.
\end{align*} By property $(T_{Schur},G,K,\ell)$, we get 
\begin{align}\label{T_Schur contradicting ineq}
|\varphi_m(x)|=|\varphi_m' (x)|\leq D\phi (x) \quad \text{for all} \quad x\in H, m\in \N.
\end{align} Now, if we take the sequential limits $\lim_{x\rightarrow\infty}\lim_{m\rightarrow\infty}$ on $(\ref{T_Schur contradicting ineq})$, the left hand side goes to 1 whereas the right hand side goes to 0. This gives a desired contradiction.
\end{proof}
\begin{cor}
If $G$ is unbounded and has property $(T_{Schur},G,K,\ell)$, then $(G,\ell)$ does not have tame cuts.
\end{cor}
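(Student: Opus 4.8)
The plan is to apply Theorem C with $H=G$ and then to remove the $K$-bi-invariance qualifier from its conclusion by a double-averaging argument. Since $G$ is closed in itself and unbounded, and satisfies property $(T_{Schur},G,K,\ell)$, Theorem C immediately gives that $(G,\ell)$ admits no $K$-bi-invariant tame cuts. The entire task is therefore to upgrade this to the absence of \emph{any} tame cuts, and the bridge is the Cowling--Haagerup averaging of Lemma \ref{KKavg}, which does not increase the multiplier norm.

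Before averaging I would ensure the balls $B_n$ are $K$-bi-invariant. A general proper length function need not be constant on double cosets $KxK$, so I replace $\ell$ by the equivalent $K$-bi-invariant length function $L_3+L_4$ built in the proof of the lemma preceding Theorem C; it vanishes on $K$, so $x\in B_n$ if and only if $k_1xk_2\in B_n$ for all $k_1,k_2\in K$. This replacement is harmless: both the hypothesis and the conclusion are stable under passing to an equivalent length function. Tame cuts transfer after a reindexing $n\mapsto\lceil cn+c\rceil$, since a ball for one length function is contained in a controlled ball for the other; and property $(T_{Schur},G,K,\cdot)$ transfers after rescaling $s$ and the vanishing function $\phi$ by bounded factors, so Theorem C still applies to the modified length function.

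Now suppose, for contradiction, that $(\varphi_n)$ are tame cuts for $(G,\ell)$, with $\|\varphi_n\|_{MA}\leq Cn^a$ and $\varphi_n|_{B_n}\equiv 1$. Set
\[
\dot{\varphi}_n(x)=\int_K\int_K\varphi_n(k_1xk_2)\,dk_1\,dk_2 .
\]
Each $\dot{\varphi}_n$ is continuous, compactly supported (its support lies in $K\,\supp(\varphi_n)\,K$), and $K$-bi-invariant, and Lemma \ref{KKavg} gives $\|\dot{\varphi}_n\|_{MA}\leq\|\varphi_n\|_{MA}\leq Cn^a$, so the polynomial bound survives averaging. Because the balls are now $K$-bi-invariant, every $x\in B_n$ has all translates $k_1xk_2\in B_n$, whence $\varphi_n(k_1xk_2)=1$ and therefore $\dot{\varphi}_n(x)=1$; thus $\dot{\varphi}_n|_{B_n}\equiv 1$. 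So $(\dot{\varphi}_n)$ is a sequence of $K$-bi-invariant tame cuts for $(G,\ell)$, contradicting Theorem C and finishing the argument.

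The one genuinely delicate point is the stability of property $(T_{Schur},G,K,\cdot)$ under equivalent length functions, because $\|\cdot\|_{MA(H,n)}$ sees the radius $n$ only through the support constraint $\supp\subseteq B_n$; one must check that an equivalence $\ell\leq c\ell'+c$, $\ell'\leq c\ell+c$ perturbs the admissible rate $e^{sn}$ only by an affine reparametrization of $n$, which is absorbed into $s$ and $D$. Everything else --- preservation of the multiplier norm and of the value $1$ on $B_n$ under double averaging --- is routine.
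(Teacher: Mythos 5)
Your proof is correct and takes essentially the same route as the paper, which simply averages the tame cuts over $K\times K$ via Lemma \ref{KKavg} and invokes Theorem C. You additionally address a detail the paper's one-line proof glosses over --- ensuring $\dot{\varphi}_n|_{B_n}\equiv 1$ by passing to an equivalent $K$-bi-invariant length function (one could also just reindex $n\mapsto n+2\sup_K\ell$) --- and your check that both hypotheses are stable under equivalent length functions is sound.
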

\begin{proof}
$K$-bi-averaging process does not increase multiplier norm by Lemma \ref{KKavg}. Thus, we can assume the functions in tame cuts are all $K$-bi-invariant. Now the rest follows from the theorem.
\end{proof}
\begin{cor} Suppose that $G$ is a finitely generated infinite group and $H$ is a finitely generated subgroup of $G$. Suppose that $H$ is at most polynomially distorted in $G$. Recall that $H$ is polynomially distorted in $G$ if there exists $k\geq 0$ such that $\ell_H(x)\leq k\ell_G (x)^k + k$ for all $x\in H$, where $\ell_G$ and $\ell_H$ are the word length functions of $G$ and $H$, respectively. If $H$ has property $(T_{Schur}, G,\{e\},\ell_G)$, then $(H,\ell_H)$ does not have tame cuts.
\end{cor}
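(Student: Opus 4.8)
The plan is to deduce this corollary from Theorem C applied with the compact subgroup $K=\{e\}$, for which $K$-bi-invariance is an empty condition, so that Theorem C simply forbids any tame cuts for $(H,\ell_G|_H)$. Since $G$ is a finitely generated infinite group, its word length $\ell_G$ is a proper length function and $G$ is unbounded; moreover, as $G$ is discrete, every subgroup is closed, and the restriction $\ell_G|_H$ is again proper on $H$ because each ball $\{x\in H:\ell_G(x)\leq n\}=H\cap\{g\in G:\ell_G(g)\leq n\}$ is finite. Thus the hypotheses of Theorem C are met once we invoke the assumed property $(T_{Schur},G,\{e\},\ell_G)$, and the task reduces to transferring tame cuts between the two length functions $\ell_H$ and $\ell_G|_H$ on $H$.

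I would argue by contradiction: suppose $(H,\ell_H)$ admits tame cuts $(\varphi_m)$, so that there are constants $C,a\geq 0$ with $\varphi_m\equiv 1$ on the $\ell_H$-ball of radius $m$ and $\|\varphi_m\|_{MA(H)}\leq Cm^a$ for all $m\in\N$. The key observation is that the polynomial distortion hypothesis $\ell_H(x)\leq k\ell_G(x)^k+k$ translates into an inclusion of balls: if $\ell_G(x)\leq n$ then $\ell_H(x)\leq kn^k+k$, so the $\ell_G|_H$-ball of radius $n$ is contained in the $\ell_H$-ball of radius $kn^k+k$.

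I would then reindex the given sequence, setting $\psi_n=\varphi_{m(n)}$ with $m(n)=\lceil kn^k+k\rceil$. Each $\psi_n$ is a finitely supported function that is identically $1$ on the $\ell_H$-ball of radius $m(n)$, hence identically $1$ on the $\ell_G|_H$-ball of radius $n$ by the inclusion above. Moreover $\|\psi_n\|_{MA(H)}\leq C\,m(n)^a\leq C(2k)^a n^{ka}$ for $n\geq 1$, so the multiplier norms grow only polynomially in $n$ and $(\psi_n)$ therefore constitutes tame cuts for $(H,\ell_G|_H)$. This contradicts Theorem C and completes the argument.

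The only point requiring care is the preservation of the polynomial growth rate: because the radius is reparametrized by the polynomial $m(n)$ and the multiplier norms grow polynomially in the radius, their composition $m(n)^a$ remains polynomial in $n$, with uniform constants $C(2k)^a$ and exponent $ka$. Had the distortion been merely subexponential rather than polynomial, this composition could destroy the polynomial bound, which is precisely why the polynomial distortion hypothesis is the natural assumption here.
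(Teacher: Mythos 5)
Your argument is correct and is evidently the intended one: the paper states this corollary without proof, and the natural route is exactly yours --- take $K=\{e\}$ so that $K$-bi-invariance is vacuous on a discrete group, note that the word length $\ell_G$ is a proper length function on the infinite group $G$, and use the distortion bound $\ell_H(x)\leq k\ell_G(x)^k+k$ to reparametrize tame cuts for $(H,\ell_H)$ into tame cuts for $(H,\ell_G|_H)$ with only a polynomial loss in the exponent, contradicting Theorem C. The one point worth adding is that the proof of Theorem C actually needs $H$ itself to be unbounded (the final limit $x\to\infty$ is taken inside $H$); this is guaranteed here because a bounded (i.e.\ finite) subgroup cannot have property $(T_{Schur},G,\{e\},\ell_G)$ by the paper's lemma on compact subgroups, so the hypothesis forces $H$ to be infinite.
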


We do not have any example of groups fitting to the corollaries above.

\section{Proof of Theorem D}\label{sec examples}
In this section, we prove Theorem D case by case. That is achieved by exploiting the following stability properties.
\begin{prop}\label{restriction map}
Restrictions of (completely bounded) [characteristic] tame cuts on a closed subgroup are again (completely bounded) [characteristic] tame cuts.
\end{prop}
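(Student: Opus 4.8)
The plan is to verify, property by property, that each defining feature of tame cuts survives restriction to the closed subgroup $H$. Throughout, let $(\varphi_n)$ be (completely bounded) [characteristic] tame cuts for $(G,\ell)$ with constants $C,a\geq 0$, and set $\psi_n=\varphi_n|_H$.

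First I would check that $\ell|_H$ is again a proper length function on $H$. The length-function axioms ($\ell(h^{-1})=\ell(h)$, subadditivity, $\ell(e)=0$) are inherited verbatim from $G$, so only properness needs an argument. The ball of radius $n$ in $(H,\ell|_H)$ is exactly $B_n\cap H$; since $\ell$ is proper on $G$ the set $B_n$ is relatively compact, and $B_n\cap H$ is a subset of this relatively compact set whose closure lies in the closed subgroup $H$, hence is relatively compact in $H$. Thus $\ell|_H$ is proper, and in the discrete case each $B_n\cap H$ is finite.

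Next come the structural conditions, which transfer formally. Each $\psi_n$ is continuous as a restriction of a continuous function, and $\supp(\psi_n)\subseteq \supp(\varphi_n)\cap H$ is a closed subset of the compact set $\supp(\varphi_n)$, hence compact in $H$. For the normalization, if $h\in B_n\cap H$ then $h\in B_n$, so $\varphi_n(h)=1$ and therefore $\psi_n|_{B_n\cap H}\equiv 1$. In the characteristic case $\varphi_n=\n1_{E_n}$ one has $\psi_n=\n1_{E_n\cap H}$, which is again a characteristic function, so that feature is preserved as well.

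The only genuinely non-formal step is the norm estimate $\|\psi_n\|_{MA(H)}\leq \|\varphi_n\|_{MA(G)}$ together with its completely bounded version, which I would obtain from the contractivity of restriction on multiplier algebras. For the completely bounded norm this is immediate from characterization (iv) of the de Cannière--Haagerup theorem: writing $\varphi_n(y^{-1}x)=\langle P(x),Q(y)\rangle$ with $\sup_{x,y}\|P(x)\|\,\|Q(y)\|\leq \|\varphi_n\|_{M_0A(G)}$ and restricting the bounded maps $P,Q$ to $H$ (noting that $y^{-1}x\in H$ whenever $x,y\in H$) exhibits $\psi_n$ as a Herz--Schur multiplier of $H$ of no larger norm. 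For the ordinary multiplier norm I would deduce the contraction $MA(G)\to MA(H)$ from Herz's restriction theorem $A(H)=A(G)|_H$: given $u\in A(H)$, lift it to $\tilde u\in A(G)$ with $\|\tilde u\|_{A(G)}$ arbitrarily close to $\|u\|_{A(H)}$, note that $(\varphi_n\tilde u)|_H=\psi_n u$, and apply the contractivity of restriction on the Fourier algebra to conclude $\|\psi_n u\|_{A(H)}\leq \|\varphi_n\|_{MA(G)}\|u\|_{A(H)}$. Combining these, the bounds $\|\psi_n\|_{MA(H)}\leq Cn^a$ (and $\|\psi_n\|_{M_0A(H)}\leq Cn^a$ in the completely bounded case) hold for all $n$, so $(\psi_n)$ is (completely bounded) [characteristic] tame cuts for $(H,\ell|_H)$ with the same constants. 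I expect the main obstacle to be precisely this transfer of the multiplier norm, and specifically the non-completely-bounded case: in contrast to the Herz--Schur parametrization available for $M_0A$, it does not follow by an elementary manipulation and must be routed through the restriction theory of the Fourier algebra.
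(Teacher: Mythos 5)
Your proposal is correct and follows essentially the same route as the paper: the whole content is the contractivity of the restriction maps $MA(G)\rightarrow MA(H)$ and $M_0A(G)\rightarrow M_0A(H)$, which the paper simply cites from de Canni\`ere--Haagerup (Proposition~1.12 of \cite{dCH1985}) while you reprove it via the $P,Q$ parametrization and Herz's restriction theorem. The remaining checks (properness of $\ell|_H$, compact support, the normalization on $B_n\cap H$, preservation of characteristic functions) are exactly the formal verifications the paper leaves implicit, so nothing is missing.
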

\begin{proof}
Directly follows from \cite[Proposition 1.12]{dCH1985} which states
 if $G$ is a locally compact group and $H$ is its closed subgroup, then 
 the restriction maps
\begin{align*}
MA(G)\rightarrow MA(H)\quad and \quad M_0A(G) \rightarrow M_0 A(H)
\end{align*}
 are norm decreasing.
\end{proof}
\begin{lem}\label{extension from open subgroup}
Let $G$ be a locally compact group, and $H$ an open subgroup. The extensions by identity on $H$ and zero on $G\setminus H$ 
\begin{align*}
A(H)\rightarrow A(G),\quad MA(H)\rightarrow MA(G),\quad and \quad M_0A(H)\rightarrow M_0A(G)
\end{align*} are isometric.
\end{lem}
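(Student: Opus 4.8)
The plan is to establish all three isometries by the same two–sided strategy. Writing $\bar\varphi$ for the extension of $\varphi\in A(H)$ (resp.\ $MA(H)$, $M_0A(H)$) by zero, the easy inequality $\|\varphi\|\leq\|\bar\varphi\|$ holds in every case because $\varphi=\bar\varphi|_H$ and restriction to the closed subgroup $H$ is norm decreasing: by Eymard's theory \cite{eymard1964algebre} for $A$, and by \cite[Proposition 1.12]{dCH1985} for $MA$ and $M_0A$ (the maps used in Proposition \ref{restriction map}). So the whole content is the reverse inequality $\|\bar\varphi\|\leq\|\varphi\|$, which I obtain by an explicit extension construction in each algebra. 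Throughout I normalise the Haar measure of $H$ to be the restriction of that of $G$, which is legitimate because $H$ is open; then extension by zero is an isometric embedding $L^2(H)\hookrightarrow L^2(G)$, and, crucially, every left coset $gH$ is clopen, so after fixing a transversal $R$ with $G=\bigsqcup_{r\in R}rH$ one gets a locally constant (hence continuous) map $x\mapsto r_x$ and a continuous map $x\mapsto h_x:=r_x^{-1}x$.

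For $A(G)$, I take $\varphi=\sum_i\xi_i*\eta_i\in A(H)$ and extend each $\xi_i,\eta_i$ by zero. Since $H$ is a subgroup, the convolution $\bar\xi_i*\bar\eta_i$ computed in $G$ is supported in $H\cdot H=H$ and there agrees with the $H$-convolution $\xi_i*\eta_i$; hence $\bar\varphi=\sum_i\bar\xi_i*\bar\eta_i$ belongs to $A(G)$ with $\|\bar\varphi\|_{A(G)}\leq\sum_i\|\xi_i\|_2\|\eta_i\|_2$, giving $\|\bar\varphi\|_{A(G)}\leq\|\varphi\|_{A(H)}$. With this $A$-isometry in hand the $MA$ case is immediate: for $\psi\in A(G)$ one checks pointwise that $\bar\varphi\,\psi$ is exactly the zero-extension of $\varphi\cdot(\psi|_H)$, so that $\|\bar\varphi\,\psi\|_{A(G)}=\|\varphi\cdot\psi|_H\|_{A(H)}\leq\|\varphi\|_{MA(H)}\|\psi|_H\|_{A(H)}\leq\|\varphi\|_{MA(H)}\|\psi\|_{A(G)}$, where the last step uses that restriction is norm decreasing on $A$. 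This yields $\bar\varphi\in MA(G)$ with $\|\bar\varphi\|_{MA(G)}\leq\|\varphi\|_{MA(H)}$.

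The $M_0A$ case requires the Gilbert-type representation of condition (iv) in the de Canni\`ere--Haagerup theorem, since the bound must control the completely bounded norm and not merely the action on $A(G)$. Given $\varphi\in M_0A(H)$ with $\|\varphi\|_{M_0A(H)}\leq C$, fix bounded continuous $P,Q\colon H\to\mathcal{H}$ with $\varphi(y^{-1}x)=\langle P(x),Q(y)\rangle$ for $x,y\in H$ and $\sup_H\|P\|\,\sup_H\|Q\|\leq C$. On $G$ I set $\bar P(x)=P(h_x)\otimes\delta_{r_x}$ and $\bar Q(y)=Q(h_y)\otimes\delta_{r_y}$ inside $\mathcal{H}\otimes\ell^2(R)$. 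Then $\langle\bar P(x),\bar Q(y)\rangle=\langle P(h_x),Q(h_y)\rangle$ when $r_x=r_y$ and $0$ otherwise; the key observation is that $r_x=r_y$ is precisely the condition $y^{-1}x\in H$, in which case $y^{-1}x=h_y^{-1}h_x$ and the inner product equals $\varphi(y^{-1}x)=\bar\varphi(y^{-1}x)$, while if $r_x\neq r_y$ then $y^{-1}x\notin H$ and both sides vanish. Since $h_x$ ranges over all of $H$, one has $\sup\|\bar P\|\,\sup\|\bar Q\|=\sup_H\|P\|\,\sup_H\|Q\|\leq C$, and $\bar P,\bar Q$ are continuous because they are continuous on each clopen coset. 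Condition (iv) then gives $\bar\varphi\in M_0A(G)$ with $\|\bar\varphi\|_{M_0A(G)}\leq C=\|\varphi\|_{M_0A(H)}$.

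I expect the only delicate points to be book-keeping ones: checking that the extended convolutions and the extended Gilbert kernel genuinely reproduce $\bar\varphi$ on the off-diagonal cosets, where every term must vanish simultaneously, and confirming the continuity of $\bar P,\bar Q$. Both hinge on $H$ being open, so that the cosets are clopen and the coset/transversal maps are continuous; this is exactly where the hypothesis is used and is the main obstacle to watch. Once these are verified, the reverse inequalities are formal consequences of restriction being norm decreasing, completing the three isometries.
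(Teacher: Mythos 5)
Your proposal is correct, and for the $A(G)$ and $MA(G)$ cases it is essentially the paper's argument: the paper cites Eymard for the isometry $A(H)\to A(G)$ (which you reprove via the zero-extensions of the $\xi_i,\eta_i$ and the support computation $HH=H$), and for $MA$ it runs the same identity $\iota(\varphi)\psi=\iota\bigl(\varphi\cdot\psi|_H\bigr)$ that you use; the only cosmetic difference is that the paper obtains equality of norms in a single supremum computation, using that restriction maps the unit ball of $A(G)$ \emph{onto} that of $A(H)$ (surjectivity being witnessed by the zero-extension itself), whereas you prove the inequality $\|\bar\varphi\|_{MA(G)}\leq\|\varphi\|_{MA(H)}$ and get the converse from norm-decrease of restriction. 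The genuine divergence is in the $M_0A$ case. The paper reduces it to the already-established $MA$ isometry by tensoring with $K=SU(2)$: by condition (ii) of the de Canni\`ere--Haagerup theorem one has $\|\varphi\|_{M_0A(H)}=\|\varphi\otimes\mathbbm{1}_K\|_{MA(H\times K)}$, the subgroup $H\times K$ is open in $G\times K$, and the zero-extension of $\varphi\otimes\mathbbm{1}_K$ is $\bar\varphi\otimes\mathbbm{1}_K$, so the $MA$ case applied to $H\times K\leq G\times K$ finishes the proof. You instead extend a Gilbert representation, replacing $P(x)$ by $P(h_x)\otimes\delta_{r_x}$ in $\mathcal{H}\otimes\ell^2(R)$, and your verification is sound: the condition $r_x=r_y$ is exactly $y^{-1}x\in H$, in which case $y^{-1}x=h_y^{-1}h_x$, the off-coset terms vanish on both sides, the suprema of the norms are unchanged, and continuity of $\bar P,\bar Q$ holds because the cosets are clopen. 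Both routes are valid; the paper's is shorter once the $MA$ statement is available, while yours is self-contained and makes explicit exactly where openness of $H$ enters (local constancy of $x\mapsto r_x$ and continuity of $x\mapsto h_x$).
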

\begin{proof}
The case of Fourier algebra is proven in \cite{eymard1964algebre}.
Denote by $\iota : MA(H)\rightarrow MA(G)$ the trivial extension. 
\begin{align*}
\|\iota (\varphi)\|_{MA(G)} &= \sup \{ \|\iota(\varphi)\psi\|_{A(G)}: \psi \in  A(G), \|\psi\|_{A(G)}\leq 1\}\\ &=  \sup \{ \|\varphi \psi|_H\|_{A(H)}: \psi \in  A(G), \|\psi\|_{A(G)}\leq 1\}\\
& = \sup \{ \|\varphi \psi'\|_{A(H)}: \psi' \in  A(H), \|\psi'\|_{A(H)}\leq 1\} = \|\varphi\|_{MA(H)},
\end{align*} hence $\iota $ is isometric.
The case $M_0A(H) \rightarrow M_0A(G)$ is done similarly by considering $H\times SU(2)$ and $ G\times SU(2)$.
\end{proof}
\begin{prop}\label{polynomialext}
Let $\Gamma$ be a discrete group, $\ell:\Gamma\rightarrow \Z_+$ a proper length function, and $H$ a subgroup of $\Gamma$. Assume that $(H,\ell|_H)$ has (completely bounded) [characteristic] tame cuts and that $H$ has polynomial co-growth. Then $\Gamma$ has (completely bounded) [characteristic] tame cuts.
\end{prop}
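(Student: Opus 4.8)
The plan is to exploit the fact that in a discrete group every subgroup is open, so the trivial extension is available \emph{and} isometric, and then to spread a tame cut of $H$ across the cosets of $H$ that meet the ball $B_n$. I would first record two isometric operations on multiplier algebras. Since $\Gamma$ is discrete, $H$ is an open subgroup, so by Lemma \ref{extension from open subgroup} the extension-by-zero map $MA(H)\to MA(\Gamma)$ (and likewise $M_0A(H)\to M_0A(\Gamma)$) is isometric; write $\tilde\varphi\in MA(\Gamma)$ for the trivial extension of $\varphi\in MA(H)$. Second, left translation acts isometrically on $MA(\Gamma)$ and $M_0A(\Gamma)$: writing $L_g\varphi(x)=\varphi(g^{-1}x)$, one checks $(L_g\varphi)\psi=L_g(\varphi\cdot L_{g^{-1}}\psi)$, and since translation is isometric on $A(\Gamma)$ this yields $\|L_g\varphi\|_{MA}=\|\varphi\|_{MA}$. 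I read \emph{polynomial co-growth} as the assertion that the coset-counting function $N(n):=|\{gH:gH\cap B_n\neq\emptyset\}|$ satisfies $N(n)\leq C'n^{b}$ for some $C',b\geq 0$.

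Let $(\varphi_m)$ be the given (completely bounded) [characteristic] tame cuts for $(H,\ell|_H)$, so $\|\varphi_m\|_{MA}\leq Cm^a$ and $\varphi_m\equiv 1$ on $B_m\cap H$. For each coset $gH$ meeting $B_n$ I would choose a representative $g_i\in B_n$ (possible since the coset meets $B_n$) and set
\begin{align*}
\psi_n=\sum_{i=1}^{N(n)}L_{g_i}\tilde\varphi_{2n}.
\end{align*}
Because the cosets $g_iH$ are pairwise disjoint and $\tilde\varphi_{2n}$ is supported in $H$, the summands have pairwise disjoint supports; hence $\psi_n$ has finite support, and in the characteristic case $\psi_n$ is again a characteristic function, namely $\n1_{\bigsqcup_i g_i\,\supp\varphi_{2n}}$.

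It remains to verify the two defining conditions. For $x\in B_n$ lying in the coset $g_iH$ we have $g_i^{-1}x\in H$ with $\ell(g_i^{-1}x)\leq\ell(g_i)+\ell(x)\leq 2n$, so $g_i^{-1}x\in B_{2n}\cap H$ and therefore $\psi_n(x)=\tilde\varphi_{2n}(g_i^{-1}x)=1$; thus $\psi_n|_{B_n}\equiv 1$. For the norm, the triangle inequality together with the two isometries gives
\begin{align*}
\|\psi_n\|_{MA}\leq\sum_{i=1}^{N(n)}\|L_{g_i}\tilde\varphi_{2n}\|_{MA}=N(n)\,\|\varphi_{2n}\|_{MA}\leq C'n^{b}\cdot C(2n)^a\leq C''n^{a+b},
\end{align*}
which is polynomial. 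The identical computation with $\|\cdot\|_{M_0A}$ settles the completely bounded case, since both Lemma \ref{extension from open subgroup} and the translation invariance hold verbatim there; this is the mirror image of the restriction statement of Proposition \ref{restriction map}, now passing from the subgroup up to $\Gamma$, made possible by discreteness.

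The main obstacle is organizational rather than conceptual. One must choose \emph{short} coset representatives $g_i\in B_n$ so that the radius needed inside each coset to cover $B_n$ stays linear, which forces the use of $\varphi_{2n}$ (and only mildly inflates the polynomial degree); and one must arrange disjoint supports so that the value $1$ is attained without cancellation and the characteristic-function structure is preserved. Once the two isometries are identified and the polynomial bound on $N(n)$ is invoked, the estimate closes immediately.
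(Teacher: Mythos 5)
Your proposal is correct and follows essentially the same route as the paper: extend the tame cuts of $H$ by zero (isometrically, since $H$ is open in the discrete group $\Gamma$), translate by a polynomially growing set of short coset representatives, and sum; the paper writes this sum as $\sum_{y\in S_n}\delta_y*\psi_{2n}$ with $S_n$ the image of a length-minimizing cross section, which is exactly your $\sum_i L_{g_i}\tilde\varphi_{2n}$. The inclusion $B_n\subseteq S_n(B_{2n}\cap H)$, the disjointness of supports, and the norm estimate via translation-invariance all match the paper's argument.
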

Here, we say $H$ has polynomial co-growth with respect to $\ell$ if there exist constants $C',b\geq 0$ such that 
\begin{align*}
\# \left\{ xH : x\in \Gamma, \ell(x)\leq n\right\} \leq C'n^{b} \quad \text{for all}\quad n\in\N.
\end{align*} When $H$ is normal in $G$, it is equivalent to say the quotient group $G/H$ has polynomial growth with respect to the length function given by $\ell^H (xH) =\min \{ \ell (xh):h\in H\}$.
\begin{proof}[Proof of Proposition \ref{polynomialext}]
By hypothesis, there exist constants $C,a\geq 0$ and a sequence of finitely supported functions $(\psi_n)$ on $H$ with 
\begin{align}\label{BncapH}
 \psi_n|_{B_n\cap H} \equiv 1 \quad and \quad \|\psi_n\|_{MA(H)}\leq Cn^a \quad \text{for all}\quad n\in\N.
\end{align} We identify $\psi_n$ with the function on $\Gamma$ extending $\psi_n$ by 0 on $H^c$. By Lemma \ref{extension from open subgroup},
\begin{align*}
\|\psi_n\|_{MA(H)} = \|\psi_n\|_{MA(\Gamma)}.
\end{align*}
Choose a cross section $\sigma : \Gamma /H\rightarrow \Gamma$ in a way that $\ell(\sigma (xH)) = \min_{h\in H} \ell (xh)$. We can choose such $\sigma$ since the length function  $\ell$ takes value in $\Z_+$. Then by polynomial co-growth, the sets
\begin{align*} 
S_n = \{y \in \sigma(\Gamma/H): \ell(y) \leq n \}
\end{align*} grow at polynomial rate, say $C'n^b$. Also, note that \begin{align}\label{Bnsubset}
B_n \subseteq A_n (B_{2n} \cap H) \quad \text{for all}\quad n\in\N.
\end{align} To see that, take any $x\in B_n$.  By the choice of the cross section, we have $\sigma (xH)\in S_n\subseteq B_n$, and since $x$ and $\sigma(x)$ represent the same class, we have $\sigma(x)^{-1}x\in B_{2n}\cap H$. Now $x\in \sigma(x) (B_{2n}\cap H) \subseteq S_n (B_{2n}\cap H)$.

For every $n\in\N$, define a finitely supported function by $\varphi_n = \sum_{y\in S_n} \delta_y * \psi_{2n}$ on $\Gamma$. Equation (\ref{Bnsubset}), together with (\ref{BncapH}), shows that $\supp(\varphi_n)|_{B_n} \equiv 1$. Since $\Gamma$ acts on $MA(\Gamma)$ by isometries, we have
\begin{align*}
\|\varphi_n\|_{MA(\Gamma)} \leq \sum_{y\in S_n} \|\delta_y*\psi_{2n}\|_{MA(\Gamma)} \leq C''n^b \|\psi_{2n}\|_{MA(\Gamma)}\leq CC''2^a n^{a+b}.
\end{align*}
This completes the proof for tame cuts. The same proof works for the other cases.
\end{proof} 
\begin{prop}
Let $F$ be a finite group, and $P$ be a finitely generated group with polynomial growth. Then the wreath product $F\wr P$ has characteristic tame cuts. In particular, the Lamplighter group $\Gamma =\Z_2\wr\Z$ has characteristic tame cuts.
\end{prop}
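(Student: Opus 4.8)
```latex
\textbf{Proof proposal.}

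The plan is to build completely bounded characteristic tame cuts for the wreath product $F\wr P$ directly, exploiting the two ingredients at hand: the finiteness of the lamp group $F$ and the polynomial growth of the base $P$. The natural candidate for the truncating characteristic functions is $\varphi_n=\n1_{E_n}$, where $E_n$ collects those elements of $F\wr P=\left(\bigoplus_P F\right)\rtimes P$ whose base-part lies in the $P$-ball of radius $n$ and whose lamp configuration is supported on that same $P$-ball. First I would fix a word length function $\ell$ on $F\wr P$ and check that, up to the usual domination, a ball $B_n$ in $F\wr P$ is contained in such an $E_{Cn}$: this is where polynomial growth of $P$ enters, since the number of lamps that can be lit by a word of length $n$ is controlled by the size of a $P$-ball of radius $\sim n$, which is at most $C'n^b$ by hypothesis. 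Thus $\varphi_n|_{B_n}\equiv 1$ costs only a polynomially large support, and $E_n$ is a genuine finite set so $\varphi_n\in C_c(\Gamma)$ is a legitimate characteristic function.

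The heart of the matter is the multiplier-norm bound $\|\varphi_n\|_{M_0A}\leq Cn^a$. Here I would reduce to the base group by the structural observation that $F\wr P$ fibers over $P$ with finite ``fiber-truncation'' data: conditioning the configuration to be supported on the radius-$n$ $P$-ball replaces the infinite direct sum $\bigoplus_P F$ by the finite group $\bigoplus_{B_n^P} F$, and $F$ being finite makes each such finite factor contribute a uniformly bounded (indeed, $M_0A$-norm $1$, since it is a characteristic function of a subgroup on a compact/finite group) multiplier. The key algebraic step is to write $\varphi_n$ as a product (or a controlled convolution sum) of the pullback of a characteristic tame cut $\n1_{B_n^P}$ on the base $P$ against the subgroup indicator of the finite part; since $P$ has polynomial growth it is amenable, hence weakly amenable with $\Lambda_P=1$, so by Proposition~\ref{example weak amenability} the base itself carries completely bounded tame cuts with exponent $a=0$. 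I would then invoke the submultiplicativity $\|\varphi\psi\|_{M_0A}\leq\|\varphi\|_{M_0A}\|\psi\|_{M_0A}$ together with the fact that the subgroup indicators extend isometrically (Lemma~\ref{extension from open subgroup}) to conclude the polynomial bound.

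The main obstacle I anticipate is controlling the interaction between the base and the lamps inside the $M_0A$ norm: a characteristic function on $F\wr P$ does \emph{not} factor as a pure tensor of a base-multiplier and a lamp-multiplier, because the word length couples the two. Concretely, the set $E_n$ is not a product set in the group, so one cannot simply tensor completely bounded multipliers. The cleanest way around this is probably to realize $F\wr P$ as an increasing union (or inductive limit) of the \emph{finite-rank restricted wreath products} $\left(\bigoplus_{B_m^P}F\right)\rtimes P$, each of which is an extension of the amenable $P$ by a finite group and therefore amenable, hence already possesses completely bounded characteristic tame cuts with a uniform exponent; one then transfers these across the (isometric, by Lemma~\ref{extension from open subgroup} applied to the relevant open subgroups) inclusions and sums over the polynomially many cosets exactly as in Proposition~\ref{polynomialext}. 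In fact, since $F$ finite and $P$ of polynomial growth together force $F\wr P$ to be amenable, the slickest route is simply to produce \emph{ordinary} characteristic tame cuts $\varphi_n=\n1_{B_n}$ with $\|\varphi_n\|_{MA}\leq Cn^a$ and then invoke the coincidence $MA=M_0A$ on amenable groups (Theorem~\ref{example weak amenability}'s underlying Nebbia--Losert result) to upgrade them to completely bounded ones for free; establishing the polynomial $MA$-bound for $\n1_{B_n}$ via the growth estimate on $|B_n|$ combined with the triangle inequality $\|\n1_{B_n}\|_{MA}\leq\sum_{x\in B_n}\|\delta_x\|_{MA}$ is then the only genuine computation, and controlling that sum polynomially is the delicate point the rapid-decay-type estimates must supply.
```
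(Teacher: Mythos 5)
There is a genuine gap: none of the three routes you sketch actually delivers the polynomial multiplier bound. The ``slickest route'' at the end fails outright: for $F$ nontrivial and $P$ infinite the group $F\wr P$ has \emph{exponential} growth (already $\Z_2\wr\Z$ has $|B_n|\geq 2^{cn}$), so the triangle-inequality bound $\|\n1_{B_n}\|_{MA}\leq\sum_{x\in B_n}\|\delta_x\|_{MA}=|B_n|$ is exponential; this is not a delicate point to be controlled but a dead end. Amenability of $F\wr P$ does give $MA=M_0A$ for free, but contributes nothing to the norm estimate. Your intermediate route via the finite-rank restricted wreath products $\left(\oplus_{B_m^P}F\right)\rtimes P$ also does not fit Proposition \ref{polynomialext}: those subgroups contain the whole base $P$ but omit the lamps outside $B_m^P$, so the number of their cosets meeting a ball of radius $n$ in $F\wr P$ again grows exponentially in $n$ (it essentially counts reachable lamp configurations outside $B_m^P$); they do not have polynomial co-growth, which is the hypothesis Proposition \ref{polynomialext} actually requires. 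Your first route (a tensor/product decomposition of $\n1_{E_n}$) you correctly identify as problematic yourself, since the set of elements whose lamp configuration is supported in $B_n^P$ is not even a subgroup once the base acts by shifts.

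The idea you are missing is to run the co-growth reduction with respect to the lamp group $H=\oplus_{P}F$ itself: it is normal with quotient $P$ of polynomial growth, hence has polynomial co-growth, so Proposition \ref{polynomialext} reduces everything to producing characteristic tame cuts on $(H,\ell|_H)$. There one exploits that $H$ is locally finite: the subgroup $H_n$ generated by $B_n\cap H$ is finite, and by Lemma \ref{extension from open subgroup} the characteristic function $\varphi_n=\n1_{H_n}$, extended by zero, satisfies $\|\varphi_n\|_{M_0A(H)}=\|\varphi_n\|_{M_0A(H_n)}=1$, since on $H_n$ it is the constant function $1$. So $H$ has characteristic tame cuts with exponent $a=0$, and the coset-summation $\varphi_n=\sum_{y\in S_n}\delta_y*\psi_{2n}$ of Proposition \ref{polynomialext} --- polynomially many disjoint isometric translates --- finishes the proof. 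Your candidate support $E_n$ is essentially the right set, but the norm control must come from the subgroup structure of the lamp part, not from counting elements of $B_n$ or from tensoring base and fiber multipliers.
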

\begin{proof}
Recall that $F\wr P$ is the semidirect product group $(\oplus_{n\in P} F)\rtimes_\rho P$ where $\rho$ acts by shifts. We choose a word length function  $\ell$ on $\Gamma$ associated to any finite generating set. By Proposition \ref{polynomialext}, it is enough to show that  $(H,\ell|_H)$ has characteristic tame cuts, where $H = \oplus_{n\in P} F$. 

For $n\in\N$, define $H_n$ the subgroup of $H$ generated by $B_n\cap H$. Since $H$ is locally finite, $H_n$ is a finite group. Define $\varphi_n = \n1_{H_n}$. By Lemma \ref{extension from open subgroup}, we have
$\|\varphi_n\|_{M_0A(H)} = \|\varphi_n\|_{M_0A(H_n)} = 1$
because the multiplier $\varphi_n = \n1_{H_n}$ acts on $A(H_n)$ trivially. The support of $\varphi_n$ is clearly finite and contains the relative ball $\{x\in H: \ell (x)\leq n\}$ of radius $n$.
\end{proof}
\begin{prop}\label{examsemidir}  For any coprime $p,q\in\N$, the group $\Gamma= \Z [\frac{1}{pq}] \rtimes_{\frac{p}{q}} \Z$ has characteristic tame cuts.
\end{prop}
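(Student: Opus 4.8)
The plan is to use Proposition~\ref{polynomialext} to reduce the statement to the abelian normal subgroup $H=\Z[\frac{1}{pq}]$, and then to build characteristic tame cuts on $H$ from the nested cyclic subgroups that exhaust it. Write $\Gamma=H\rtimes_{p/q}\langle t\rangle$ with generators $a=(1,0)$ and $t=(0,1)$, and fix the word length function $\ell$ of $\Gamma$ associated to $\{a^{\pm1},t^{\pm1}\}$; it is proper and $\Z_+$-valued, as required by Proposition~\ref{polynomialext}. Since $H$ is normal with $\Gamma/H\cong\Z$, and the quotient length function has linear, hence polynomial, growth, the subgroup $H$ has polynomial co-growth. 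It therefore suffices to produce characteristic tame cuts for $(H,\ell|_H)$; as $\Gamma$ is solvable, hence amenable, the resulting cuts are automatically completely bounded, matching Theorem~D (cf. the amenability remark opening Section~\ref{PV}).

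For $k\in\N$ set $H_k=(pq)^{-k}\Z\leq H$, a cyclic subgroup isomorphic to $\Z$ which is open in the discrete group $H$, with $H=\bigcup_k H_k$. The \emph{key geometric estimate} is the containment
\[
B_n\cap H \ \subseteq\ (pq)^{-n}\{m\in\Z: |m|\leq (pq)^{3n}\}\ =:\ S_n\ \subseteq\ H_n .
\]
To see this, read a word of length $\leq n$ representing an element of $H$ (hence with vanishing $t$-exponent) from left to right: the running $t$-exponent stays in $[-n,n]$, and the at most $n$ occurrences of $a^{\pm1}$ each contribute a summand $\pm(p/q)^{j}$ with $|j|\leq n$. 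Since $(p/q)^{j}\in(pq)^{-n}\Z$ and $|(p/q)^{j}|\leq (pq)^{n}$ whenever $|j|\leq n$, the represented element lies in $(pq)^{-n}\Z$ and has absolute value at most $n(pq)^{n}\leq (pq)^{2n}$, which yields the displayed inclusion.

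Now put $\varphi_n=\n1_{S_n}$. Since $S_n$ is an interval inside $H_n\cong\Z$, under this isomorphism $\varphi_n$ corresponds to $\n1_{[-N,N]\cap\Z}$ with $N=(pq)^{3n}$. Because $H_n$ is open in $H$, Lemma~\ref{extension from open subgroup} gives $\|\varphi_n\|_{MA(H)}=\|\varphi_n\|_{MA(H_n)}$, and the computation~(\ref{hardyeq}) in Proposition~\ref{example Z} yields $\|\n1_{[-N,N]\cap\Z}\|_{MA(\Z)}=\frac{4}{\pi}\log N+O(1)=O(n)$. Hence $\|\varphi_n\|_{MA(H)}\leq Cn$ while $\varphi_n|_{B_n\cap H}\equiv 1$ and $\supp(\varphi_n)$ is finite, so $(\varphi_n)$ are characteristic tame cuts for $(H,\ell|_H)$ with linear multiplier growth. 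Proposition~\ref{polynomialext} then upgrades these to characteristic tame cuts for $\Gamma$, completing the argument.

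The main obstacle is the metric containment $B_n\cap H\subseteq S_n$: one must simultaneously control the depth (the power of $pq$ appearing in the denominator) and the spread (the size of the numerator) of elements of length $\leq n$. This is a Baumslag--Solitar-type word-metric estimate, and the delicate point is confining the running $t$-exponent along a word to $[-n,n]$; once that is done, the denominator and numerator bounds, and hence the logarithmic-in-$N$ multiplier estimate, follow routinely.
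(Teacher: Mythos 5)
Your argument is correct and follows essentially the same route as the paper: reduce to the normal subgroup $H=\Z[\frac{1}{pq}]$ via Proposition \ref{polynomialext}, trap $B_n\cap H$ inside an interval of exponential-in-$n$ size in the cyclic subgroup $(pq)^{-n}\Z$, and invoke Lemma \ref{extension from open subgroup} together with the Dirichlet-kernel estimate (\ref{hardyeq}) to obtain a linear multiplier bound. The only quibble is the degenerate case $pq=1$, where your simplification $n(pq)^n\le (pq)^{2n}$ fails; but there $\Gamma\cong\Z^2$ and the claim is immediate, or one simply retains the bound $n(pq)^{n}$ as the paper does with $nq^{2n}p^{2n}$.
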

\begin{proof}
Recall $\Gamma= \Z [\frac{1}{pq}] \rtimes_{\frac{p}{q}} \Z$ is isomorphic to the subgroup
\begin{align*}
\Gamma\cong\left\{ \left( \begin{matrix}
\left(\frac{p}{q}\right)^k & P\\ 0 & 1
\end{matrix}\right) :  k\in \Z , P\in \Z [\frac{1}{pq} ]\right\}
\end{align*}  of $GL_2 (\R)$
generated by the set
\[S = \left\{ s=\left( \begin{matrix}
\frac{p}{q} & 0\\ 0 & 1
\end{matrix} \right) , t  = \left( \begin{matrix}
1 & 1\\ 0 & 1
\end{matrix} \right) \right\}.\] Let $\ell$ be the associated word length  function on $\Gamma$. We only need to prove that  the subgroup $H\cong\left\{ \left( \begin{matrix}
1 & P\\ 0 & 1
\end{matrix}\right) : P\in \Z [\frac{1}{pq} ]\right\} \cong \Z [\frac{1}{pq}]$ has characteristic tame cuts with respect to the restricted length function $\ell |_H$.

Suppose $x = \left( \begin{matrix}
1 & P\\ 0 & 1
\end{matrix} \right) \in H$ and  $\ell (x)\leq n$. Then there is a decomposition
\[x = s_1...s_n\] with $s_i=\left( \begin{matrix}
\left(\frac{p}{q} \right)^{\varepsilon_i} & \delta_i \\ 0 & 1
\end{matrix} \right) \in S^{\pm 1}$, $\varepsilon_i,\delta_i \in \{-1,0,1\}$, $1\leq i\leq n$ and $\sum \varepsilon_i=0$.  Moreover, we have 
\begin{align*}
P &= \delta_1 + \delta_2 \left( \frac{p}{q} \right)^{\varepsilon_{1}} + \delta_3 \left( \frac{p}{q} \right)^{\varepsilon_{1} + \varepsilon_2} +... + \delta_n \left( \frac{p}{q} \right)^{\varepsilon_{1}+ ...+\varepsilon_{n-1}} 
\\&= \dfrac{\sum\limits_{i=1}^n\delta_i p^{n+\sum_{j=1}^{i-1}\varepsilon_j}q^{n-\sum_{j=1}^{i-1}\varepsilon_j} }{q^n p^n}.
\end{align*}
From this, it is easy to see that the cyclic subgroup $H_n$ of $H$ generated by the element \[ x_n = \left( \begin{matrix}
1 & \frac{1}{q^{n}p^n}\\ 0 & 1
\end{matrix} \right) \in H\]  contains the relative ball $B_n\cap H = \{x\in H: \ell (x) \leq n\}$. Moreover, for any element $x\in B_n\cap H$, its power in $H_n=\langle x_n \rangle$ has an upper bound  
\begin{align*}
|x|&=  \left|\sum\limits_{i=1}^n\delta_i p^{n+\sum_{j=1}^{i-1} \varepsilon_j}q^{n-\sum_{j=1}^{i-1}\varepsilon_j} \right|\leq n q^{2n} p^{2n}.
\end{align*} Denote by $A_n$ the subset of $H_n$ containing all elements with absolute power less than $nq^{2n}p^{2n}$. Note that $A_n$ is finite set containing $B_n\cap H$. Combining the facts $H$ is amenable, Lemma \ref{extension from open subgroup}, and (\ref{hardyeq}), we get
\begin{align}\label{amenable,subgroup}
\begin{split}
\|\n1_{A_n}\|_{M_0A(H)} &= \|\n1_{A_n}\|_{A(H)} = \|\n1_{A_n}\|_{A(H_n)} \\&= \|\mathcal{F}(\n1_{A_n})\|_{L^1 (\T)} = \dfrac{4}{\pi} \log (nq^{2n}p^{2n}) + O(1).
\end{split}
\end{align} This completes the proof since (\ref{amenable,subgroup}) is at most polynomial.
\end{proof}

\begin{prop}  For any $A\in SL(d,\Z)$, the  group $\Z^d\rtimes_A \Z$ has characteristic tame cuts.
\end{prop}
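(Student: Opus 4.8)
The plan is to apply Proposition \ref{polynomialext} with $H = \Z^d$, the base of the semidirect product. Equip $\Gamma = \Z^d \rtimes_A \Z$ with the word length function $\ell$ associated to the generating set $S = \{e_1,\dots,e_d,t\}$, where the $e_i$ are the standard generators of $\Z^d$ and $t$ implements the action of $A$. Since $H=\Z^d$ is normal with quotient $\Gamma/H \cong \Z$, and $\Z$ has linear growth, the subgroup $H$ has polynomial (indeed linear) co-growth: the induced length satisfies $\ell^H(xH) \le \ell(x)$, so $\#\{xH : \ell(x)\le n\} \le \#\{k\in\Z : |k|\le n\} = 2n+1$. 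It therefore remains to produce characteristic tame cuts for $(H,\ell|_H)$.

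The key step is a distortion estimate, carried out exactly as in Proposition \ref{examsemidir}. Realize $\Gamma$ inside the affine group via $t \mapsto \begin{pmatrix} A & 0 \\ 0 & 1\end{pmatrix}$ and $e_i \mapsto \begin{pmatrix} I & e_i \\ 0 & 1\end{pmatrix}$, so that a vector $v\in\Z^d$ corresponds to $\begin{pmatrix} I & v \\ 0 & 1\end{pmatrix}$. Writing such an element as a word $s_1\cdots s_n$ of length $n$, with letters having shift $\varepsilon_i\in\{-1,0,1\}$ and translation part $\delta_i$ of sup-norm at most $1$, the matrix multiplication gives $v = \sum_{i=1}^n A^{\varepsilon_1+\cdots+\varepsilon_{i-1}}\delta_i$. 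Each exponent has absolute value at most $n$, so with $M = \max(\|A\|,\|A^{-1}\|)\ge 1$ (operator norm) and a norm-equivalence constant $C=C(d)$ one obtains $\|v\|_\infty \le C\,n M^n$. Hence every element of the relative ball $B_n\cap H$ lies in the cube $Q_n = [-CnM^n, CnM^n]^d \cap \Z^d$.

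Now take $\varphi_n = \n1_{Q_n}$, a finitely supported characteristic function whose support contains $B_n\cap H$. Since $H=\Z^d$ is abelian, its Fourier multiplier norm equals the $A(\Z^d)$-norm, which is the $L^1(\T^d)$-norm of $\mathcal{F}(\varphi_n)$; as $Q_n$ is a product of intervals, $\mathcal{F}(\varphi_n)$ factors as a product of Dirichlet kernels and, applying $(\ref{hardyeq})$ in each coordinate,
\[
\|\varphi_n\|_{M_0A(H)} = \|\mathcal{F}(\varphi_n)\|_{L^1(\T^d)} = \|D_{CnM^n}\|_{L^1(\T)}^d = \left(\tfrac{4}{\pi}\log(CnM^n)+O(1)\right)^d .
\]
This grows like $(n\log M)^d$, which is polynomial in $n$, so $(\varphi_n)$ forms completely bounded characteristic tame cuts for $(H,\ell|_H)$ (the two notions coinciding since $H$ is amenable). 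Proposition \ref{polynomialext} then upgrades this to completely bounded characteristic tame cuts for $\Gamma$.

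The only genuine content is the observation that although $\Z^d$ is typically exponentially distorted in $\Gamma$—so the balls $B_n\cap H$ are exponentially large whenever $A$ has an eigenvalue off the unit circle—the multiplier norm of the indicator of a box grows only \emph{logarithmically} in its side length. This logarithm exactly absorbs the exponential distortion and leaves a polynomial bound, which is the same mechanism already exploited for $\Z[\tfrac1{pq}]\rtimes_{p/q}\Z$.
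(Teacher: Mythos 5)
Your proof is correct and follows essentially the same route as the paper: reduce to the base $H=\Z^d$ via Proposition \ref{polynomialext} (using linear co-growth of the $\Z$-quotient), bound the distortion of $B_n\cap H$ by a box of side $O(nM^n)$, and observe that the $L^1(\T^d)$-norm of the corresponding product of Dirichlet kernels grows only like $(\log(nM^n))^d$, hence polynomially. The only cosmetic differences are that you spell out the co-growth verification and the amenability remark explicitly, which the paper leaves implicit.
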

\begin{proof}
The proof is essentially similar to Proposition \ref{examsemidir}. Recall that the group $\Gamma = \Z^d\rtimes_A \Z$ can be identified as a subgroup of $SL(d+1,\Z)$ generated by the finite set
\begin{align*}S = \left\{
e_1 = I_{d+1} +E_{1,d+1},...,e_d = I_{d+1} +E_{d,d+1}, t= \left(\begin{matrix}
A &0\\ 0 &1
\end{matrix}\right)
\right\}.
\end{align*} It is practical to use the following unique canonical form
\[x=(v,k) \quad v\in \Z^d, k\in \Z\]
for every element $x\in \Gamma$, and in this form, the group law is given by
\begin{align*}
(v,k) (w,l) = (v+A^k w , k+l).
\end{align*}
We endow $\Gamma$ with the length function $\ell$ associated to $S$. Denote by $H$ the subgroups of $\Gamma$ generated by $\{ e_1, ...,e_d\}$. Since $\Gamma = H\rtimes_A \Z$, by Proposition \ref{polynomialext}, it is enough to show that $(H,\ell|_H)$ has characteristic tame cuts. Suppose $\ell(v,0) \leq n$. Then we can write 
\begin{align*}
(v,0) = x_1,...,x_n
\end{align*} for some $x_i = (\varepsilon_i, \delta_i)\in S^{\pm 1}$, $1\leq i \leq n$ with $\sum \delta_i = 0$ and 
\begin{align*}
v = \varepsilon_1 + A^{\delta_1} \varepsilon_2 + ... + A^{\delta_1+...+\delta_{n-1}} \varepsilon_n.
\end{align*} Notice that
\begin{align*}
\|v\|_\infty \leq \|v\|_2 \leq 1+ \|A^{\delta_1}\|+...\|A^{\delta_1 + ...+\delta_{n-1}}\| \leq n \max({\|A\|,\|A^{-1}\|})^n \leq n C^n.
\end{align*} Thus, the set $A_n = \{(v,0)\in H : v\in \Z^d, \|v\|_\infty \leq nC^n\}$ is finite and contains the relative ball $H\cap B_n$ of radius $n$. Moreover, by (\ref{hardyeq}), we have
\begin{align}\label{An}
\begin{split}
\|\n1_{A_n}\|_{M_0A(G)} &= \|\mathcal{F}(\n1_{A_n})\|_{L^1 (\T^d)} = 
\|D_{nC^n}\otimes ...\otimes D_{nC^n}\|_{L^1 (\T^d)}\\& =\|D_{nC^n}\|_{L^1 (\T)}^d =\left(\dfrac{4}{\pi} \log (nC^n) + O(1)\right)^d.
\end{split}
\end{align} Since (\ref{An}) is at most polynomial, we conclude.
\end{proof}
\begin{prop}\label{example BS}
For any $p,q\in\N$, the Baumslag-Solitar group 
\begin{align*}
BS(p,q)=\langle a,t | ta^p t^{-1}=a^q	\rangle
\end{align*} has completely bounded characteristic tame cuts.
\end{prop}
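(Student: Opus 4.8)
The plan is to push everything into the kernel of the height homomorphism, where the non‑abelian complexity gets absorbed by a tree coordinate and the ``vertical'' coordinate becomes a genuine homomorphism. Write $h:BS(p,q)\to\Z$ for the homomorphism with $h(a)=0,\ h(t)=1$, let $N=\ker h\trianglelefteq BS(p,q)$, and equip $\Gamma=BS(p,q)$ with the word length $\ell$ of $\{a,t\}$. Since $h$ is $1$‑Lipschitz, $\#\{gN:\ell(g)\le n\}\le 2n+1$, so $N$ has (linear, hence polynomial) co‑growth. By Proposition \ref{polynomialext} it therefore suffices to produce completely bounded characteristic tame cuts for $(N,\ell|_N)$; this reduction is what lets us avoid the amenability of $N$, which fails once $p,q\ge 2$.

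On $N$ I would use two coordinates. First, $\Gamma$ acts on its Bass--Serre tree $T$ (the $(p+q)$‑regular tree with vertex stabiliser $\langle a\rangle$ at a base vertex $v_0$), giving $g\mapsto d_T(gv_0,v_0)$. Second, the affine representation $a\mapsto(x\mapsto x+1)$, $t\mapsto(x\mapsto \tfrac{q}{p}x)$ has translation part $\beta:\Gamma\to\R$, a cocycle for $\alpha(g)=(q/p)^{h(g)}$; since $\alpha\equiv 1$ on $N$, the restriction $\beta|_N:N\to\mathbb{Q}$ is a \emph{homomorphism} onto a rank‑one group $\beta(N)\subseteq\Z[\tfrac1{pq}]$, with $\beta(a)=1$ and $\beta(t^kat^{-k})=(q/p)^k$. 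For $g\in N$ with $\ell(g)\le n$ one has $d_T(gv_0,v_0)\le n$ (the number of $t^{\pm1}$ letters) and $|\beta(g)|\le C^n$ with denominator dividing $(pq)^n$. So, letting $\tilde E_n=\tfrac1{(pq)^n}\{k\in\Z:|k|\le C^n(pq)^n\}$ be an interval in the cyclic group $\tfrac1{(pq)^n}\Z\subseteq\beta(N)$, I set
\[
A_n=\{g\in N:\ d_T(gv_0,v_0)\le n\}\cap\beta^{-1}(\tilde E_n).
\]
Then $B_n\cap N\subseteq A_n$, and $A_n$ is finite: only finitely many height‑$0$ vertices lie within tree‑distance $n$ of $v_0$, and on each fibre $g_v\langle a\rangle$ the homomorphism $\beta$ takes the values $\beta(g_v)+m$ $(m\in\Z)$, of which only finitely many land in the bounded set $\tilde E_n$.

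Since $\n1_{A_n}=\n1_{\{d_T\le n\}}\cdot(\n1_{\tilde E_n}\circ\beta)$, submultiplicativity of the $M_0A$‑norm reduces the task to bounding two factors. The first is a radial Schur multiplier of the tree: restricted to the orbit $Nv_0$ it has $M_0A(N)$‑norm no larger than its $\Aut(T)$‑radial norm, and writing $\n1_{\{d_T\le n\}}=\sum_{k=0}^n\n1_{\{d_T=k\}}$ and invoking the polynomial estimates for characteristic functions of tree spheres on homogeneous trees from \cite{haagerup2010schur} gives an $O(n^2)$ bound. The second factor is the inflation along the homomorphism $\beta$ of $\n1_{\tilde E_n}$ on the amenable group $\beta(N)$; inflation is $M_0A$‑contractive and $M_0A=A$ there, so as in Proposition \ref{examsemidir} the estimate (\ref{hardyeq}) yields $\|\n1_{\tilde E_n}\circ\beta\|_{M_0A(N)}\le\|\n1_{\tilde E_n}\|_{A}=\tfrac4\pi\log\#\tilde E_n+O(1)=O(n)$. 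Thus $\|\n1_{A_n}\|_{M_0A(N)}=O(n^3)$, so $(\n1_{A_n})$ are completely bounded characteristic tame cuts for $(N,\ell|_N)$, and Proposition \ref{polynomialext} completes the argument for every $p,q\in\N$ (the amenable cases $p=1$ or $q=1$ require nothing extra, since then $\beta|_N$ is injective and $A_n$ is already a Dirichlet interval in a cyclic group).

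The main obstacle I expect is the tree factor: I am relying on the fact that the indicator of a tree ball is a completely bounded radial multiplier whose norm grows only polynomially in $n$, which is the one genuinely non‑elementary input and must be imported from the homogeneous‑tree Schur‑multiplier theory of \cite{haagerup2010schur} (together with the remark that restricting a radial Schur multiplier to the orbit $Nv_0$ does not increase its norm). Everything else is bookkeeping: the real conceptual point is that passing to $N$ linearises $\beta$, so that the vertical indicator becomes an honest pullback from an amenable rank‑one group, while the non‑amenable part of $N$ is cut down to finite fibres purely by the tree coordinate rather than by any approximation property of $N$ itself.
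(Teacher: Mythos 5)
Your argument is correct, but it takes a genuinely different route from the paper's. The paper does not pass to the kernel of the height map: it invokes the Gal--Januszkiewicz embedding (Theorem \ref{thmGal}) to realize $BS(p,q)$ as a discrete subgroup of $\Aut(T)\times(\Z[\frac1{pq}]\rtimes_{\frac{p}{q}}\Z)$, multiplies the tree-ball cuts of Lemma \ref{treelem} by the tame cuts of $\Z[\frac1{pq}]\rtimes_{\frac{p}{q}}\Z$ already constructed in Proposition \ref{examsemidir}, and restricts along the diagonal. You instead reduce to $N=\ker h$ via the co-growth Proposition \ref{polynomialext} and exploit that the translation cocycle $\beta$ becomes a homomorphism on $N$, so the ``vertical'' cut is an honest pullback from a rank-one abelian group. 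Your route avoids the closed-embedding theorem altogether: finiteness of $A_n$ follows from local finiteness of $T$ together with injectivity of $m\mapsto\beta(g_va^m)$ on each stabilizer coset, rather than from properness of a length on the ambient product. The price is one extra ingredient the paper never states, namely that precomposition with a group homomorphism is contractive on $M_0A$; this is immediate from item (iv) of the de Canni\`ere--Haagerup characterization ($\varphi(\beta(g)^{-1}\beta(g'))=\langle P(\beta(g')),Q(\beta(g))\rangle$), but it is \emph{not} available for the $MA$-norm, so it is essential that you work with completely bounded multipliers throughout — which you do. Two small repairs. First, for the tree factor you should simply apply Lemma \ref{treelem} (Bo\.{z}ejko--Picardello) to the restricted $N$-action on $T$, which gives $\|\mathbbm{1}_{\{d_T(\cdot v_0,v_0)\le n\}}\|_{M_0A(N)}\le 2n+1$ directly; the radial Schur-multiplier theory of \cite{haagerup2010schur} concerns homogeneous trees of degree at least $3$ and is both unnecessary and awkward when $p=q=1$. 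Second, your interval $\tilde E_n$ must allow $|k|\le nC^n(pq)^n$ rather than $C^n(pq)^n$, since a word of length $n$ gives $|\beta(g)|\le n\max(p/q,q/p)^n$; this is harmless, as it only perturbs the constant in the $O(n)$ Fourier-algebra estimate coming from (\ref{hardyeq}).
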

The idea of the proof is essentially the same as \cite{gal2002menability} where it is proved that Baumslag-Solitar groups are weakly amenable and a-T-menable using the following theorem.
\begin{thm}[Gal-Januszkiewicz ,\cite{gal2002menability}]\label{thmGal}
Let $p,q\in\N$, and $T$ the Bass-Serre tree of HNN-extension 
$BS(p,q) = HNN(\Z,p\Z \sim q\Z)$. Denote by $j_1:BS(p,q)\rightarrow Aut(T)$  the obvious group morphism. Denote $j_2: BS(p,q)\rightarrow \Z [\frac{1}{pq}] \rtimes_{\frac{p}{q}} \Z$ the group morphism defined by \begin{align*}
a\mapsto \left( \begin{matrix}
1 & 1\\ 0 & 1
\end{matrix} \right),\quad t\mapsto \left( \begin{matrix}
\frac{p}{q} & 0\\ 0 & 1
\end{matrix} \right).
\end{align*} 
Then the diagonal morphism $j=(j_1,j_2):BS(p,q)\rightarrow Aut(T)\times \left( \Z [\frac{1}{pq}] \rtimes_{\frac{p}{q}} \Z \right)$ is a closed embedding  (when $Aut(T)$ is endowed with the compact-open topology).
\end{thm}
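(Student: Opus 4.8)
The plan is to read this as a discreteness statement in a locally compact group and harvest closedness for free. First I would record the topological inputs. Since $p,q\in\N$, the Bass-Serre tree $T$ is $(p+q)$-regular, hence locally finite, so $\Aut(T)$ with the compact-open topology is a totally disconnected locally compact group in which the stabilizer $\Aut(T)_{v_0}$ of the base vertex $v_0$ is a \emph{compact open} subgroup. The second factor $\Lambda=\Z[\frac{1}{pq}]\rtimes_{\frac{p}{q}}\Z$ is a countable group carrying the discrete topology, so $\{e\}$ is open in $\Lambda$ and the full target $\Aut(T)\times\Lambda$ is Hausdorff and locally compact. Because $BS(p,q)$ is discrete, $j$ is automatically continuous; consequently, to prove that $j$ is a closed embedding it suffices to establish that (I) $j$ is injective and (II) its image $j(BS(p,q))$ is discrete. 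Indeed, a discrete subgroup of a Hausdorff topological group is automatically closed, and a continuous injection of a discrete group with discrete image is a topological embedding.

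The two algebraic facts I would extract from Bass-Serre theory are: the $BS(p,q)$-stabilizer of the base vertex $v_0$ of $T$ is exactly the vertex group $\langle a\rangle\cong\Z$; and $j_2$ restricted to $\langle a\rangle$ is injective, since $j_2(a^k)$ is the nontrivial translation by $k$ in $\Z[\frac{1}{pq}]$. Granting these, both (I) and (II) reduce to the same one-line computation. For injectivity, if $j(g)=(\id,e)$ then $j_1(g)$ fixes every vertex, in particular $v_0$, so $g\in\langle a\rangle$; but then $j_2(g)=e$ forces $g=e$. For discreteness, consider the open identity neighborhood $U=\Aut(T)_{v_0}\times\{e\}$ of $\Aut(T)\times\Lambda$, which is open because $\Aut(T)_{v_0}$ is open and $\Lambda$ is discrete. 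An element $j(g)=(j_1(g),j_2(g))$ lies in $U$ if and only if $j_1(g)$ fixes $v_0$ and $j_2(g)=e$, that is $g\in\langle a\rangle$ and $j_2(g)=e$, that is $g=e$. Hence $j(BS(p,q))\cap U=\{(\id,e)\}$, so the image is discrete, therefore closed, and $j$ is a closed embedding.

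The content is not in the point-set topology but in the \emph{transversality} of the two homomorphisms, which is what I would flag as the main point. Neither factor does the job alone: the tree action $j_1$ is far from proper, its vertex stabilizers being the infinite groups $\langle a\rangle\cong\Z$, while the affine action $j_2$ is far from injective, its kernel being a large normal subgroup coming from the non-abelian part of $BS(p,q)$. The embedding works precisely because these two defects are complementary: $j_2$ is faithful exactly on the vertex stabilizers that $j_1$ collapses, so a single vertex-stabilizer neighborhood already isolates the identity in the image. Verifying this complementarity is the crux; once the Bass-Serre identification $\mathrm{Stab}_{BS(p,q)}(v_0)=\langle a\rangle$ and the local finiteness of $T$ are in hand, the remaining topological bookkeeping is soft.
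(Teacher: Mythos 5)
Your proof is correct, and there is nothing in the paper to compare it against: the paper states this result as an imported theorem, citing Gal--Januszkiewicz, and gives no proof of its own. Your argument is the standard one (and essentially the one in the cited source): local finiteness of the $(p+q)$-regular Bass--Serre tree makes the vertex stabilizer $\Aut(T)_{v_0}$ a compact \emph{open} subgroup, the Bass--Serre identification $\mathrm{Stab}_{BS(p,q)}(v_0)=\langle a\rangle$ together with injectivity of $j_2$ on $\langle a\rangle$ shows $j(BS(p,q))\cap\bigl(\Aut(T)_{v_0}\times\{e\}\bigr)=\{(\id,e)\}$, and then discreteness of the image plus the standard facts (a discrete subgroup of a Hausdorff group is closed; a continuous bijection between discrete spaces is a homeomorphism) finish the job. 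Your emphasis on the transversality of the two maps is exactly the right point: $j_1$ alone is not even injective (e.g.\ for $BS(p,p)$ the central element $a^p$ lies in every vertex stabilizer, so it acts trivially on $T$), and $j_2$ alone has large kernel, but each repairs the other's defect on $\langle a\rangle$.

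One small remark on the statement rather than your proof: as literally written, the assignment $a\mapsto\left(\begin{smallmatrix}1&1\\0&1\end{smallmatrix}\right)$, $t\mapsto\left(\begin{smallmatrix}p/q&0\\0&1\end{smallmatrix}\right)$ sends $ta^pt^{-1}$ to the translation by $p^2/q$, not by $q$, so $j_2$ respects the relation $ta^pt^{-1}=a^q$ only after replacing $p/q$ by $q/p$ (equivalently, reading the relation as $t^{-1}a^pt=a^q$). This is a harmless convention slip in the paper's statement, and your argument is insensitive to it, since all you use is that $j_2$ restricted to $\langle a\rangle$ is injective, which holds under either convention.
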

We also need the following lemma.
\begin{lem}[Bo\.{z}ejko-Picardello, \cite{bozejko1993weaklyamen}]\label{treelem}
Suppose that a discrete group $\Gamma$ acts on a tree $T=(V,E)$ by isometries. Then for any base point $v_0\in V$ and $n\in\N$, the characteristic function $\phi_n$ of the set $ B_n = \{x\in \Gamma:d(v_0,xv_0)\leq n\}$ defines a completely bounded Fourier multiplier on $\Gamma$ with $\|\phi_n\|_{M_0 A}\leq 2n+1$.
\end{lem}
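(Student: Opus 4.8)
The plan is to realize $\phi_n$ as a Herz--Schur symbol and apply criterion (iv) of the de Canniere--Haagerup theorem. Since $\Gamma$ acts on $T$ by isometries, for $x,y\in\Gamma$ we have $\phi_n(y^{-1}x)=\n1_{\{d(v_0,\,y^{-1}xv_0)\le n\}}=\n1_{\{d(xv_0,\,yv_0)\le n\}}$. Hence it suffices to exhibit a Hilbert space $\h$ and maps $\xi,\eta\colon V\to\h$ with $\n1_{\{d(u,w)\le n\}}=\langle\xi(u),\eta(w)\rangle$ for all $u,w\in V$ and $\sup_u\|\xi(u)\|\,\sup_w\|\eta(w)\|\le 2n+1$; then $P(x)=\xi(xv_0)$ and $Q(y)=\eta(yv_0)$ are bounded, $\phi_n(y^{-1}x)=\langle P(x),Q(y)\rangle$, and criterion (iv) yields $\phi_n\in M_0A(\Gamma)$ with $\|\phi_n\|_{M_0A}\le 2n+1$. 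The problem thus reduces to the purely combinatorial statement that the kernel $K_n(u,w)=\n1_{\{d(u,w)\le n\}}$ on the vertex set of a tree is a Schur multiplier of norm at most $2n+1$.

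To build $\xi,\eta$ I first fix a coherent upward direction on $T$. A single base vertex is not enough: iterating the move ``one step toward $v_0$'' stabilises at the root and produces spurious coincidences near $v_0$. I would remove this defect by fixing an end $\omega$ of $T$ (attaching an infinite ray at $v_0$ first, if $T$ has no end), and setting $p^a(v)$ to be the vertex at distance $a$ from $v$ along the geodesic ray $[v,\omega)$; then $a\mapsto p^a(v)$ is injective and never terminates. Note that $\xi,\eta$ need not be $\Gamma$-equivariant, so no invariance of $\omega$ is required. For $a,b\ge 0$ consider the elementary kernels $T_{a,b}(u,w)=\n1_{\{p^a(u)=p^b(w)\}}$; each is a Schur multiplier of norm at most $1$, being $\langle\delta_{p^a(u)},\delta_{p^b(w)}\rangle$ in $\ell^2(V)$.

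The combinatorial core is the identity
\begin{align*}
K_n=\sum_{a+b=n}T_{a,b}+\sum_{a+b=n-1}T_{a,b}.
\end{align*}
To prove it, fix $u,w$, let $c$ be their confluent toward $\omega$, and put $a_0=d(u,c)$, $b_0=d(w,c)$, so $d(u,w)=a_0+b_0$. Since the rays $[u,\omega)$ and $[w,\omega)$ coincide beyond $c$, one checks that $p^a(u)=p^b(w)$ holds exactly when $a\ge a_0$, $b\ge b_0$ and $a-a_0=b-b_0$; the absence of an upper constraint here is precisely what the choice of an end buys. Consequently, for fixed $m$ the value $\sum_{a+b=m}T_{a,b}(u,w)$ equals $1$ iff $d(u,w)\le m$ and $d(u,w)\equiv m\pmod 2$, and $0$ otherwise. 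Adding the cases $m=n$ and $m=n-1$ records exactly the two parity classes of $\{0,1,\dots,n\}$, giving $K_n(u,w)=\n1_{\{d(u,w)\le n\}}$.

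Finally I realise each diagonal sum as an inner product of orthonormal families. In $\h_m=\ell^2\bigl(V\times\{0,\dots,m\}\bigr)$ set $\xi_m(u)=\sum_{a=0}^m\delta_{(p^a(u),\,a)}$ and $\eta_m(w)=\sum_{b=0}^m\delta_{(p^b(w),\,m-b)}$; the second label forces $a+b=m$, so $\langle\xi_m(u),\eta_m(w)\rangle=\sum_{a+b=m}T_{a,b}(u,w)$, while the distinct labels make each family orthonormal, whence $\|\xi_m(u)\|=\|\eta_m(w)\|=\sqrt{m+1}$. Taking $\xi=\xi_n\oplus\xi_{n-1}$ and $\eta=\eta_n\oplus\eta_{n-1}$ on $\h_n\oplus\h_{n-1}$ gives $\langle\xi(u),\eta(w)\rangle=K_n(u,w)$ with $\|\xi(u)\|^2=\|\eta(w)\|^2=(n+1)+n=2n+1$, so $\sup_u\|\xi(u)\|\,\sup_w\|\eta(w)\|=2n+1$, exactly as required. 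I expect the main obstacle to be the combinatorial identity together with the root/boundary subtlety; once the end is fixed, the norm bookkeeping produces the sharp constant $2n+1$ automatically.
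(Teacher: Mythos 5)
Your proof is correct, but note that the paper itself offers no proof of this lemma at all: it is quoted as an external result of Bo\.{z}ejko--Picardello, so there is nothing internal to compare against. What you have done is reconstruct, in a self-contained way, essentially the known argument (in the style of Pytlik--Szwarc and of the original Bo\.{z}ejko--Picardello paper): fix an end $\omega$ of the tree, use the ``shift toward $\omega$'' maps $p^a$, decompose the ball kernel as $K_n=\sum_{a+b=n}T_{a,b}+\sum_{a+b=n-1}T_{a,b}$ via the confluence point, and realize each diagonal sum by orthonormal families so that the two parities of $d(u,w)\in\{0,\dots,n\}$ are picked up exactly once. Your treatment of the two standard pitfalls is right: using an end rather than the root $v_0$ avoids the stabilization $p^a(u)=v_0$ that would create spurious coincidences, and since $\xi,\eta$ need not be $\Gamma$-equivariant you may enlarge $T$ by an attached ray when $T$ has no end (this preserves distances on $V$, and a Schur multiplier bound on the larger vertex set restricts). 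The bookkeeping $\|\xi(u)\|^2=\|\eta(w)\|^2=(n+1)+n=2n+1$ gives exactly the constant in the statement; one could equally well get $2n+1$ by the triangle inequality from $\|T_{a,b}\|\le 1$, but your direct-sum realization is cleaner and exhibits the kernel representation explicitly.

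One small point of rigor: the de Canniere--Haagerup theorem as quoted in the paper begins ``Let $\varphi$ be a Fourier multiplier of $G$,'' so formally its condition (iv) is only stated as equivalent to (i) for functions already known to lie in $MA(\Gamma)$. When the action is proper, $B_n$ is finite and $\phi_n$ is finitely supported, hence trivially in $A(\Gamma)\subseteq MA(\Gamma)$; but in general $B_n$ may be infinite (e.g.\ when vertex stabilizers are infinite), and then you should invoke the characterization in its standard form (Gilbert, Bo\.{z}ejko--Fendler, Jolissaint): the existence of bounded maps $P,Q$ with $\varphi(y^{-1}x)=\langle P(x),Q(y)\rangle$ by itself implies $\varphi\in M_0A(\Gamma)$ with $\|\varphi\|_{M_0A}\le\sup_x\|P(x)\|\sup_y\|Q(y)\|$, since the associated Schur multiplier on $\B(\ell^2(\Gamma))$ is completely bounded and restricts to the group von Neumann algebra. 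This is a matter of citing the right version of the theorem, not a gap in the argument.
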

\begin{proof}[Proof of Proposition \ref{example BS}]
Theorem \ref{thmGal} allows us to identify $BS(p,q)$ as a discrete subgroup of the locally compact group $Aut(T)\times\left( \Z [\frac{1}{pq}] \rtimes_{\frac{p}{q}} \Z\right)$.

For all $n\in \N$, take $\phi_n\in M_0A(Aut(T)_d)$ as in Lemma \ref{treelem} for $Aut(T)_d$-action on the tree $T$, where $Aut(T)_d$ is the discrete realization of $Aut(T)$. Let $(\psi_n)_{n\in\N}$ be tame cuts of $\Z [\frac{1}{pq}] \rtimes_{\frac{p}{q}} \Z$.
For $n\in\N$, define a function $\varphi_n$ as $\varphi_n(x,y) = \phi_n(x)\psi_n(y)$ for all $x\in Aut(T)$ and $y\in \Z [\frac{1}{pq}] \rtimes_{\frac{p}{q}} \Z$. Then by Proposition \ref{restriction map}, and \cite[Lemma 1.4]{haagerup1989}, we have
\begin{align*}
\|\varphi_n\|_{M_0A(BS(p,q))} \leq \|\phi_n\|_{M_0A (Aut(T)_d)}\|\psi_n\|_{M_0A(\Z [\frac{1}{pq}] \rtimes_{\frac{p}{q}} \Z)}.
\end{align*}
Note that the right hand side grows polynomially.
Also note that the function $\varphi_n$ is characteristic since so are $\phi_n$ and $\psi_n$. It is left to prove the support of $\varphi_n$ is finite and contains the relative ball of radius $n$. For that, first we explain which length function we want to choose. As $BS(p,q)$ is finitely generated, we can choose any proper length function  on it to prove the statement. On $Aut(T)_d$, we choose the length function  defined by $\ell_1 (x) = d(v_0,xv_0)$. Since the tree $T$ is locally finite, the $Aut(T)$-action is proper, implying the length function  $\ell_1$ is proper (when $Aut(T)$ is endowed with the compact-open topology). Thus, the support of $\phi_n$ is compact. On $\Z [\frac{1}{pq}] \rtimes_{\frac{p}{q}} \Z$, we chose any word length function $\ell_2$ associated to a finite generating set. One can easily check that  the function $\ell(x,y) = \max (\ell_1(x),\ell_2(y))$ on the direct product is also a proper length function. Now it is easily seen that the support
\begin{align*}
\supp (\varphi_n) = \left(\supp (\phi_n)\times \supp(\psi_n) \right)\cap BS(p,q)
\end{align*}
is finite and contains the relative ball $\{ z\in BS(p,q):\ell (z)\leq n\}$.
\end{proof}
\begin{rmk}
Among all the examples in this section, only the following ones satisfy RD for having polynomial growth.
\begin{enumerate}[(i)]	
\item $\Z^d\rtimes_A \Z$ with $\spec (A)\subseteq S^1$.
\item $Z [\frac{1}{1}] \rtimes_{\frac{1}{1}} \Z \cong \Z^2$.
\item $BS(p,p) $ for $p\in \N$.
\end{enumerate}
Concerning non-examples, the rigidity inequality of Theorem \ref{rigidity ineq thm} shows that higher rank simple Lie groups with finite center do not have tame cuts. It follows that uniform lattices in a higher rank simple Lie group with finite center do not have completely bounded tame cuts. We do not have an example of a finitely generated group without (characteristic) tame cuts with respect to the word length function. Our best guess is the group given by the presentation 
\begin{align*}
G=\langle a,b,c\mid aba^{-1}=b^2, cac^{-1}=a^2\rangle
\end{align*} because it has a cyclic subgroup that is double exponentially distorted. The lattices $SL(3,\Z)$ and $SL(2,\Z)\ltimes \Z^2$ are also good candidates. We also conjecture that cocompact lattices in higher rank semisimple Lie groups have characteristic tame cuts. We note that the latter follows from Valette's conjecture (see \cite{Val02}).
\end{rmk}

\section*{Acknowledgements}
This work is a part of my doctoral dissertation. I am sincerely grateful to my supervisor, Indira Chatterji, for her guidance and valuable comments. I would like to acknowledge that the notion of tame cuts was inspired by Vincent Lafforgue.
\bibliographystyle{alpha}
\bibliography{mybibfile}
\end{document}